\numberwithin{equation}{section} \theoremstyle{plain}
\newcommand{\Complex}{\mathbb C}
\newcommand{\Real}{\mathbb R}
\newcommand{\ddbar}{\overline\partial}
\newcommand{\pr}{\partial}
\newcommand{\ol}{\overline}
\newcommand{\Td}{\widetilde}
\newcommand{\norm}[1]{\left\Vert#1\right\Vert}
\newcommand{\set}[1]{\left\{#1\right\}}
\newcommand{\To}{\rightarrow}
\DeclareMathOperator{\Ker}{Ker}
\newtheorem{theorem}{Theorem}[section]
\newtheorem{lemma}[theorem]{Lemma}
\newtheorem{corollary}[theorem]{Corollary}
\newtheorem{definition}[theorem]{Definition}
\theoremstyle{definition}
\theoremstyle{remark}
\numberwithin{equation}{section}
\newcommand{\abs}[1]{\lvert#1\rvert}
\begin{document}

\title[The asymptotics of the analytic torsion on CR manifolds with $S^1$ action]
{The asymptotics of the analytic torsion on CR manifolds with $S^1$ action}

\author{Chin-Yu Hsiao}

\address{Institute of Mathematics, Academia Sinica and National Center for Theoretical Sciences, Astronomy-Mathematics Building, No. 1, Sec. 4, Roosevelt Road, Taipei 10617, Taiwan}
\thanks{The first author was partially supported by Taiwan Ministry of Science of Technology project 104-2628-M-001-003-MY2 and the Golden-Jade fellowship of Kenda Foundation. The second author was supported by Taiwan Ministry of Science of Technology project 103-2115-M-008-008-MY2}

\email{chsiao@math.sinica.edu.tw or chinyu.hsiao@gmail.com}

\author{Rung-Tzung Huang}

\address{Department of Mathematics, National Central University, Chung-Li 320, Taiwan}

\email{rthuang@math.ncu.edu.tw}

\keywords{determinant, Ray-Singer torsion, CR manifolds} 
\subjclass[2000]{Primary: 58J52, 58J28; Secondary: 57Q10}

\begin{abstract}

Let $X$ be a compact connected strongly pseudoconvex CR manifold of dimension $2n+1, n \ge 1$ with a transversal CR $S^1$-action on $X$. We introduce the Fourier components of the Ray-Singer analytic torsion on $X$ with respect to the $S^1$-action. We establish an asymptotic formula for the Fourier components of the analytic torsion with respect to the $S^1$-action. This generalizes the asymptotic formula of Bismut and Vasserot on the holomorphic Ray-Singer torsion associated with high powers of a positive line bundle to strongly pseudoconvex CR manifolds with a transversal CR $S^1$-action. 

\end{abstract}


\maketitle


\section{Introduction}

In \cite{RS2}, Ray and Singer introduced the holomorphic analytic torsion for $\overline{\partial}$-complex on complex manifolds as the complex analogue of the analytic torsion for flat vector bundles \cite{RS1}. In \cite{BV}, Bismut and Vasserot established the asymptotic formula of the holomorphic analytic torsion associated with high powers of a positive line bundle, by using the heat kernel method of \cite{B} (see also \cite[Sect. 5.5]{MM}). The asymptotic formula has applications in Arakelov geometry \cite{S}. Another applications of the holomorphic torsion appeared in the study of the moduli space of K3 surfaces by Yoshikawa in \cite{Y} and his subsequent works. Recently, M. Puchol \cite{P} extended the results of Bismut and Vasserot on the asymptotic of the holomorphic torsion to the fibration case.

In orbifold geometry, we have Kawasaki's Hirzebruch-Riemann-Roch formula~\cite{Ka79} and also general index theorem~\cite{PV}. To study further geometric problem for orbifolds (for example, local family index theorem), it is important to know the corresponding heat kernel asymptotics and to have the concept of analytic torsion. The difficulty comes from the fact that in general the heat kernel expansion involves strata contribution (see~\cite{DGGW08}) and it is difficult to study local index theorem and analytic torsion on general orbifolds. Thus, it is natural to first attack such problems on some class of orbifolds. In complex orbifold geometry, we usually consider an orbifold ample line bundle $L$ over a a projective orbifold $M$. An important case is when ${\rm Tot\,}(L^*)$ the
space of all non-zero vectors in the dual bundle of $L$ is smooth, where ${\rm Tot\,}(L^*)$ is considered as a complex manifold. In this case, the circle bundle $C(L^*)$ is a smooth manifold. Actually, $C(L^*)$ is a Sasakian manifold and all quasi-regular Sasakian manifolds are obtained this way (see~\cite{OV07}). Moreover, $C(L^*)$ is a smooth CR manifold with a transversal CR $S^1$ action. The point is that since $C(L^*)$ is a smooth manifold, if we work directly on $C(L^*)$, we should get better results than general orbifolds case (see ~\cite{Hsiao14},~\cite{HL1},~\cite{HsiaoLi15} and~\cite{CHT}). For example, in~\cite{CHT}, we can reinterpret Kawasaki's Hirzebruch-Riemann-Roch formula as an index theorem obtained by 
an integral over a smooth CR manifold which is essentially the circle bundle of this line bundle. Thus, we could first try to define and study analytic torsion on such CR manifolds. This is the motivation of this paper. 

To motivate our approach, let's come back to complex geometry case. Consider a compact smooth complex manifold $M$ of dimension $n$ and let $(L,h^L)\To M$ be a holomorpic line bundle over $M$, where $h^L$ denotes a Hermitian fiber metric of $L$. Let $(L^*,h^{L^*})\To M$ be the dual bundle of $(L,h^L)$ and put $X=\set{v\in L^*;\, \abs{v}^2_{h^{L^*}}=1}$. We call $X$ the circle bundle of $(L^*,h^{L^*})$. It is clear that $X$ is a compact CR manifold of dimension $2n+1$. Given a local holomorphic frame $s$ of $L$ on an open subset $U\subset M$ we define the associated local weight of $h^L$ by
$\abs{s(z)}^2_{h^L}=e^{-2\phi(z)}$, $\phi\in C^\infty(U, \Real)$. The CR manifold $X$ is equipped with a natural  $S^1$ action. Locally $X$ can be represented in local holomorphic coordinates $(z,\lambda)\in\mathbb C^{n+1}$, where $\lambda$ is the fiber coordinate, as the set of all $(z,\lambda)$ such that $\abs{\lambda}^2e^{2\phi(z)}=1$,
where $\phi$ is a local weight of $h^L$. The $S^1$ action on $X$ is given by $e^{i\theta}\circ (z,\lambda)=(z,e^{i\theta}\lambda)$, $e^{i\theta}\in S^1$, $(z,\lambda)\in X$.  Let $T\in C^\infty(X,TX)$ be the real vector field induced by the $S^1$ action, that is, $Tu=\frac{\pr}{\pr\theta}(u(e^{i\theta}\circ x))|_{\theta=0}$, $u\in C^\infty(X)$. We can check that $[T,C^\infty(X,T^{1,0}X)]\subset C^\infty(X,T^{1,0}X)$ and $\Complex T(x)\oplus T^{1,0}_xX\oplus T^{0,1}_xX=\Complex T_xX$ (we say that the $S^1$ action is CR and transversal). For every $m\in\mathbb Z$, put 
\[\begin{split}
\Omega^{0,\bullet}_m(X):&=\set{u\in\Omega^{0,\bullet}(X);\, Tu=imu}\\
&=\set{u\in\Omega^{0,\bullet}(X);\, u(e^{i\theta}\circ x)=e^{im\theta}u(x), \forall\theta\in[0,2\pi[}.\end{split}\]
Since $\ddbar_bT=T\ddbar_b$, we have $\ddbar_b:\Omega^{0,\bullet}_m(X)\To\Omega^{0,\bullet}_m(X)$, where $\ddbar_b$ denotes the tangential Cauchy-Riemann operator.
Let $\Omega^{0,\bullet}(M,L^m)$ be the space of smooth sections of $(0,\bullet)$ forms of $M$ with values in $L^m$, where $L^m$ is the $m$-th power of $L$. It is well-known that (see Theorem 1.2 in~\cite{CHT}) there is a bijection map 
\[A_m:\Omega^{0,\bullet}_m(X)\To\Omega^{0,\bullet}(M,L^m)\]
 such that $A_m\ddbar_{b}=\ddbar A_m$ on $\Omega^{0,\bullet}_m(X)$. Let $\Box_m$ be the Kodaira Laplacian with values in $T^{*0,\bullet}M\otimes L^m$ and let $e^{-t\Box_m}$ be the associated heat operator. It is well-known that $e^{-t\Box_m}$ admits an asymptotic expansion as $t\To0^+$. Consider $B_m(t):=(A_m)^{-1}\circ e^{-t\Box_m}\circ A_m$. Let 
 \[\Box_{b,m}:\Omega^{0,\bullet}_m(X)\To\Omega^{0,\bullet}_m(X)\] 
 be the Kohn Laplacian for forms with values in the $m$-th $S^1$ Fourier component and let $e^{-t\Box_{b,m}}$ be the associated heat operator.  We can check that  
\begin{equation}\label{e-gue150923bi}
 e^{-t\Box_{b,m}}=B_m(t)\circ Q_m=Q_m\circ B_m(t)\circ Q_m,
\end{equation}
where $Q_m:\Omega^{0,\bullet}(X)\To\Omega^{0,\bullet}_m(X)$ is the orthogonal projection. From the asymptotic expansion of $e^{-t\Box_m}$ and \eqref{e-gue150923bi}, it is straightforward to see that
\begin{equation}\label{e-gue151108}
e^{-t\Box_{b,m}}(x,x)\sim t^{-n}a_n(x)+t^{-n+1}a_{n-1}(x)+\cdots.
\end{equation} 
From \eqref{e-gue151108}, we can define $\exp ( -\frac{1}{2} \theta_{b,m}'(0) )$ the $m$-th Fourier component of the analytic torsion on the CR manifold $X$, where $\theta_{b,m}(z)  = - M \left\lbrack \operatorname{STr}  \lbrack N e^{-t \Box_{b,m}} \Pi^\perp_m  \rbrack \right\rbrack$. Here $N$ is the number operator on $T^{*0,\bullet}X$, $\operatorname{STr}$ denotes the super trace , $\Pi^\perp_m$ is the orthogonal projection onto $({\rm Ker\,}\Box_{b,m})^\perp$ and $M$ denotes the Mellin transformation. Let $\exp ( -\frac{1}{2} \theta_{m}'(0) )$ be the analytic torsion associated to $L^m$. It is easy to see that 
\begin{equation}\label{e-gue160501}
\theta_{b,m}'(0)=\theta_{m}'(0).
\end{equation}
Assume that $R^L>0$, where $R^L$ is the curvature induced by $h^L$. Bismut and Vasserot's asymptotic formula (see \cite[Theorem 8]{BV} and \cite[Theorem 5.5.8]{MM}) tells us that  
as $m \to+\infty$, we have 
\begin{equation}\label{e-gue160502}
\theta_{m}'(0)  =\frac{1}{2}\int_M \log \left( \det \left(  \frac{m \dot{R}^L }{2\pi} \right) \right) e^{m\frac{\sqrt{-1}}{2\pi}R^L}  + o(m^{n}),
\end{equation}
where $\dot{R}^L \in \operatorname{End}(T^{1,0}M)$ is  the Hermitian matrix given by
\[R^L(V, \overline{W}) = \langle\,\dot{R}^L V\,|\,\overline{W}\,\rangle,\ \ V, W\in T^{1,0}M.\]
Here $\langle\,\cdot\,|\,\cdot\,\rangle$ is the fixed Hermitian metric on $\Complex TM$. Let's reformulate \eqref{e-gue160502} in terms of geometric objects on $X$: 
\begin{equation}\label{e-gue160502I}
\theta_{b,m}'(0) =  \frac{1}{4\pi}
 \int_X \log \left( \det \left(  \frac{m \dot{\mathcal{R}} }{2\pi} \right) \right) e^{-m\frac{d\omega_0}{2\pi}} \wedge (-\omega_0) + o(m^{n}),
 \end{equation}
 where $\omega_0$ is the unique one form given by \eqref{e-gue160502b}, and $\dot{\mathcal{R}}\in \operatorname{End}(T^{1,0}X)$ is  the Hermitian matrix given by
\[id\omega_0(V, \overline{W}) = \langle\,\dot{\mathcal{R}} V\,|\,\overline{W}\,\rangle,\ \ V, W\in T^{1,0}X.\]
The purpose of this paper is to define the Ray-Singer torsion and establish \eqref{e-gue160502I} on any abstract strongly pseudoconvex CR manifols with a transversal CR locally free $S^1$-action. Note that for the case of circle bundle, the $S^1$ action is globally free and $X$ is strongly pseudoconvex if $R^L>0$. 

\subsection{The difficulty}\label{s-gue160502}

The proof of Bismut and Vasserot's asymptotic formula is based on the following results: 
\begin{equation}\label{e-gue160502a}
e^{-\frac{t}{m}\Box_m}(z,z)=(2\pi)^{-n} \frac{\det(\dot{R}^L)\exp(t\omega_d)}{\det(1-\exp(-t\dot{R}^L))}(z)m^n+o(m^n), 
\end{equation}
where $\omega_d = - \sum_{l,j}R^L(w_j, \overline{w}_l) \overline{w}^l \wedge \iota_{\overline{w}_j}$ and $\{w_j \}_{j=1}^n$ is a local orthonormal frame of $T^{1,0}M$ with dual frame $\{ w^j \}_{j=1}^n$, and 
\begin{equation}\label{e-gue160502aI}
m^{-n}\int_M{\rm STr\,}Ne^{-\frac{t}{m}\Box_m}(z,z)dv_M(z)\sim\sum ^\infty_{j=-n}B_{m,j}t^j\ \ \mbox{as $t\To0^+$, uniformly in $m$}, 
\end{equation}
where we can compute the term $\lim_{m\To\infty}B_{m,j}$, for each $j$. For the case where the $S^1$ action is only locally free, our asymptotic expansion involves an unprecedented contribution in terms of a distance function from lower dimensional strata of the $S^1$ action (see Theorem~\ref{T:1.6.1}).
Roughly speaking, on the regular part of $X$, we have 
\begin{equation}\label{e-gue160502aII}
e^{-t\Box_{b,m}}(x,x)=(2\pi)^{-n-1} \frac{\det(\dot{\mathcal{R}}) \exp(t \gamma_d)}{\det(1-\exp(-t\dot{\mathcal{R}}))}(x)+o(m^n)\mod O\Bigr(m^ne^{-md(x,X_{{\rm sing\,}})^2}\Bigr).
\end{equation} 
Moreover, as \eqref{e-gue160502aI}, integrating ${\rm STr\,}Ne^{-\frac{t}{m}\Box_{b,m}}(x,x)$ over $X$, we obtain an asymptotic expansion non-trivial the fractional power in $t^{\frac{1}{2}}$ (see Theorem~\ref{t-gue160427}).  
It turns out that this is a non-trivial generalization.


\subsection{Main result}\label{s-gue150508a}

We now formulate the main results. We refer to Section~\ref{s:prelim} for some notations and terminology used here. 

Let $(X, T^{1,0}X)$ be a compact connected strongly pseudoconvex CR manifold with a transversal CR locally free $S^1$ action $e^{i\theta}$ (see Definition~\ref{d-gue160502}), where $T^{1,0}X$ is a CR structure of $X$. 
Let $T\in C^\infty(X,TX)$ be the real vector field induced by the $S^1$ action and let $\omega_0\in C^\infty(X,T^*X)$ be the global real one form determined by 
\begin{equation}\label{e-gue160502b}
\langle\,\omega_0\,,\,T\,\rangle=-1,\ \ \langle\,\omega_0\,,\,u\,\rangle=0,\ \ \forall u\in T^{1,0}X\oplus T^{0,1}X.
\end{equation} 
For $x\in X$, we say that the period of $x$ is $\frac{2\pi}{\ell}$, $\ell\in\mathbb N$, if $e^{i\theta}\circ x\neq x$, for every $0<\theta<\frac{2\pi}{\ell}$ and $e^{i\frac{2\pi}{\ell}}\circ x=x$. For each $\ell\in\mathbb N$, put 
\begin{equation}\label{e-gue150802bm}
X_\ell=\set{x\in X;\, \mbox{the period of $x$ is $\frac{2\pi}{\ell}$}}
\end{equation} 
and let $p=\min\set{\ell\in\mathbb N;\, X_\ell\neq\emptyset}$. It is well-known that if $X$ is connected, then $X_p$ is an open and dense subset of $X$ (see Duistermaat-Heckman~\cite{Du82}). 
In this work, we assume that $p=1$ 
and we denote $X_{{\rm reg\,}}:=X_{p}=X_1$. We call $x\in X_{{\rm reg\,}}$ a regular point of the $S^1$ action. Let $X_{{\rm sing\,}}$ be the complement of $X_{{\rm reg\,}}$. 

Let $E$ be a rigid CR vector bundle over $X$ (see Definition~\ref{d-gue150508dI}) and we take a rigid Hermitian metric $\langle\,\cdot\,|\,\cdot\,\rangle_E$ on $E$ (see Definition~\ref{d-gue150514f}). Take a rigid Hermitian metric $\langle\,\cdot\,|\,\cdot\,\rangle$ on $\Complex TX$ such that $T^{1,0}X\perp T^{0,1}X$, $T\perp (T^{1,0}X\oplus T^{0,1}X)$, $\langle\,T\,|\,T\,\rangle=1$ and let $\langle\,\cdot\,|\,\cdot\,\rangle_E$ be the Hermitian metric on $T^{*0,\bullet}X\otimes E$ induced by the fixed Hermitian metrics on $E$ and $\Complex TX$. 
We denote by $dv_X=dv_X(x)$ the volume form on $X$ induced by the Hermitian metric $\langle\,\cdot\,|\,\cdot\,\rangle$ on $\Complex TX$. Then we get natural global $L^2$ inner product $(\,\cdot\,|\,\cdot\,)_{E}$ on $\Omega^{0,\bullet}(X,E)$. We denote by $L^{2}(X,T^{*0,\bullet}X\otimes E)$ the completion of $\Omega^{0,\bullet}(X,E)$ with respect to $(\,\cdot\,|\,\cdot\,)_{E}$. For every $u\in\Omega^{0,\bullet}(X,E)$, we can define $Tu\in\Omega^{0,\bullet}(X,E)$ and we have $T\ddbar_b=\ddbar_bT$.
For $m\in\mathbb Z$, put 
\[\begin{split}
\Omega^{0,\bullet}_m(X,E):&=\set{u\in\Omega^{0,\bullet}(X,E);\, Tu=imu}\\
&=\set{u\in\Omega^{0,\bullet}(X,E);\, (e^{i\theta})^*u=e^{im\theta}u,\ \ \forall\theta\in[0,2\pi[},\end{split}\]
where $(e^{i\theta})^*$ denotes the pull-back map by $e^{i\theta}$ (see \eqref{e-gue150508faI}). 
For each $m\in\mathbb Z$, we denote by $L^{2}_m(X,T^{*0,\bullet}X\otimes E)$ the completion of $\Omega^{0,\bullet}_m(X,E)$ with respect to $(\,\cdot\,|\,\cdot\,)_{E}$. 

Since $T\ddbar_b=\ddbar_bT$, we have 
\[\ddbar_{b,m}:=\ddbar_b:\Omega^{0,\bullet}_m(X,E)\To\Omega^{0,\bullet}_m(X,E).\] 
We also write
\[\ol{\pr}^{*}_b:\Omega^{0,\bullet}(X,E)\To\Omega^{0,\bullet}(X,E)\]
to denote the formal adjoint of $\ddbar_b$ with respect to $(\,\cdot\,|\,\cdot\,)_E$. Since $\langle\,\cdot\,|\,\cdot\,\rangle_E$ and $\langle\,\cdot\,|\,\cdot\,\rangle$ are rigid, we can check that 
\begin{equation}\label{e-gue150517im}
\begin{split}
&T\ddbar^{*}_b=\ddbar^{*}_bT\ \ \mbox{on $\Omega^{0,\bullet}(X,E)$},\\
&\ddbar^{*}_{b,m}:=\ddbar^{*}_b:\Omega^{0,\bullet}_m(X,E)\To\Omega^{0,\bullet}_m(X,E),\ \ \forall m\in\mathbb Z.
\end{split}
\end{equation}
Let $\Box_{b,m}$ denote the $m$-th Kohn Laplacian given by
\begin{equation}\label{e-gue151113ym}
\Box_{b,m}:=(\ddbar_b+\ddbar^*_b)^2:\Omega^{0,\bullet}_m(X,E)\To\Omega^{0,\bullet}_m(X,E).
\end{equation}
We extend $\Box_{b,m}$ to $L^{2}_m(X,T^{*0,\bullet}X\otimes E)$ by 
\begin{equation}\label{e-gue151113ymI}
\Box_{b,m}:{\rm Dom\,}\Box_{b,m}\subset L^{2}_m(X,T^{*0,\bullet}X\otimes E)\To L^{2}_m(X,T^{*0,\bullet}X\otimes E)\,,
\end{equation}
where ${\rm Dom\,}\Box_{b,m}:=\{u\in L^{2}_m(X,T^{*0,\bullet}X\otimes E);\, \Box_{b,m}u\in L^{2}_m(X,T^{*0,\bullet}X\otimes E)\}$, where for any $u\in L^{2}_m(X,T^{*0,\bullet}X\otimes E)$, $\Box_{b,m}u$ is defined in the sense of distribution. It is well-known that $\Box_{b,m}$ is self-adjoint, ${\rm Spec\,}\Box_{b,m}$ is a discrete subset of $[0,\infty[$ and for every $\nu\in{\rm Spec\,}\Box_{b,m}$, $\nu$ is an eigenvalue of $\Box_{b,m}$ (see Section 3 in~\cite{CHT}). Let $e^{-t\Box_{b,m}}$ and $e^{-t\Box_{b,m}}(x,y)\in C^\infty(X\times X, (T^{*0,\bullet}_yX\otimes E_y)\boxtimes(T^{*0,\bullet}_xX\otimes E_x))$ be associated heat operator and heat kernel (see \eqref{e-gue151023a}). Let $N$ be the number operator on $T^{*0,\bullet}X$, i.e. $N$ acts on $T^{*0,q}X$ by multiplication by $q$. By \cite[Theorem 1.7]{CHT}, we have the following asymptotic expansion: 
\begin{equation}\label{E:5.5.10b}
\operatorname{STr}  \lbrack N e^{-t \Box_{b,m}} \rbrack:=\int \operatorname{STr} (Ne^{-t\Box_m} )(x,x)dv_X(x)\sim\sum^\infty_{j=0} \hat{B}_{m,-n+\frac{j}{2}}t^{-n+\frac{j}{2}}\ \ \mbox{as $t\To0^+$}, 
\end{equation}
where $\hat{B}_{m,-n+\frac{j}{2}}\in\Complex$ independent of $t$, for each $j$, and $\operatorname{STr}$ denotes the super trace (see the discussion in the beginning of Section~\ref{s-gue160502q}). We denote by $\Pi^\perp_m:L^2_m(X,T^{0,\bullet}X\otimes E)\To({\rm Ker\,}\Box_{b,m})^\perp$ the orthogonal projection. From \eqref{E:5.5.10b}, for $\operatorname{Re}(z)>n$, we can define 
\begin{equation}\label{E:5.5.12b}
\theta_{b,m}(z)  = - M \left\lbrack \operatorname{STr}  \lbrack N e^{-t \Box_{b,m}} \Pi^\perp_m  \rbrack \right\rbrack  =   - \operatorname{STr} \left\lbrack N ({\Box}_{b,m})^{-z} {\Pi}^\perp_m \right\rbrack
\end{equation}
and $\theta_{b,m}(z)$ extends to a meromorphic function on $\mathbb{C}$ with poles contained in the set  
\[\set{\ell-\frac{j}{2};\, \ell,j\in\mathbb Z},\]
its possible poles are simple, and $\theta_{b,m}(z)$ is holomorphic at $0$ (see Lemma~\ref{L:mero}), where $M$ denotes the Mellin transformation (see Definition~\ref{d-gue160313}). The $m$-th Fourier component of the analytic torsion for the vector bundle $E$ over $X$ is given by $\exp ( -\frac{1}{2} \theta_{b,m}'(0) )$ (see Definition~\ref{d-gue160502w}). 

We denote by $\dot{\mathcal{R}}=\dot{\mathcal{R}}(x)$ the Hermitian matrix $\dot{\mathcal{R}}(x)\in \operatorname{End}(T^{1,0}_xX)$ such that for $V, W \in T^{1,0}_xX$,
\begin{equation}\label{E:1.5.15m}
i d\omega_0(x)(V, \overline{W}) = \langle\,\dot{\mathcal{R}}(x)V\,|\,\overline{W}\,\rangle.
\end{equation}
Let $\det\dot{\mathcal{R}}(x)=\lambda_1(x)\cdots\lambda_{n}(x)$, where $\lambda_1(x),\ldots,\lambda_{n}(x)$ are eigenvalues of $\dot{\mathcal{R}}(x)$. 

Our main result is the following

\begin{theorem}\label{t-gue160502w}
With the notations and assumptions above, as $m \to+\infty$, we have
\begin{equation}\label{E:5.5.33m}
\theta_{b,m}'(0) =  \frac{\operatorname{rk}(E)}{4\pi}
 \int_X \log \left( \det \left(  \frac{m \dot{\mathcal{R}} }{2\pi} \right) \right) e^{-m\frac{d\omega_0}{2\pi}} \wedge (-\omega_0) + o(m^{n}).
\end{equation}
\end{theorem}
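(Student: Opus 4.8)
\emph{Plan of proof.} The plan is to run the heat-kernel argument of Bismut--Vasserot (\cite[Theorem~8]{BV}, \cite[Section~5.5]{MM}) in the $S^1$-equivariant CR framework, feeding into it the rescaled heat-kernel asymptotics of Theorem~\ref{T:1.6.1} and the uniform small-time expansion of Theorem~\ref{t-gue160427}. Write
\[
\psi_m(t):=\operatorname{STr}\lbrack Ne^{-t\Box_{b,m}}\rbrack=\int_X\operatorname{STr}(Ne^{-t\Box_{b,m}})(x,x)\,dv_X(x).
\]
First I would note that the harmonic part plays no role: since $X$ is strongly pseudoconvex and the $S^1$-action is transversal CR, the CR vanishing theorem gives ${\rm Ker\,}\Box_{b,m}^{(q)}=0$ for all $q\ge1$ once $m$ is large, while $N$ kills $\Omega^{0,0}_m(X,E)$; hence $\operatorname{STr}\lbrack Ne^{-t\Box_{b,m}}\Pi^\perp_m\rbrack=\psi_m(t)$, and by \eqref{E:5.5.10b} and Lemma~\ref{L:mero} one has $\theta_{b,m}(z)=-\tfrac1{\Gamma(z)}\int_0^\infty t^{z-1}\psi_m(t)\,dt$ for $\operatorname{Re}(z)>n$, holomorphic at $z=0$. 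The object to estimate is $\theta_{b,m}'(0)$.

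\emph{Splitting and rescaling.} Next I would cut the Mellin integral at $t=1$. For the large-time part, the spectral gap ${\rm Spec\,}\Box_{b,m}^{(q)}\subset\lbrack cm,\infty\lbrack$ ($q\ge1$, $m$ large) together with standard heat-kernel estimates gives $\lvert\psi_m(t)\rvert\le Cm^n e^{-cm(t-1)}$ for $t\ge1$; since $1/\Gamma(z)$ vanishes at $z=0$, the contribution of $\int_1^\infty$ to $\theta_{b,m}'(0)$ is $O(m^{n-1})=o(m^n)$. In the small-time part substitute $t=u/m$:
\[
-\frac{1}{\Gamma(z)}\int_0^1 t^{z-1}\psi_m(t)\,dt=-\frac{m^{-z}}{\Gamma(z)}\int_0^m u^{z-1}\,\psi_m(u/m)\,du ,
\]
and the factor $m^{-z}$ is precisely what produces the term $n\log m$ inside $\log\det(m\dot{\mathcal{R}}/2\pi)$ in \eqref{E:5.5.33m}. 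By Theorem~\ref{t-gue160427}, $m^{-n}\psi_m(u/m)$ has a small-$u$ expansion in powers of $u^{1/2}$, uniform in $m$, with coefficients converging as $m\to\infty$; by Theorem~\ref{T:1.6.1}, see \eqref{e-gue160502aII}, the rescaled on-diagonal kernel tends on $X_{\rm reg}$ to $(2\pi)^{-n-1}\det(\dot{\mathcal{R}})\exp(u\gamma_d)/\det(1-\exp(-u\dot{\mathcal{R}}))$ with error $O\bigl(m^n e^{-md(x,X_{\rm sing})^2}\bigr)$. As $X_{\rm sing}$ has positive codimension, $\int_X m^n e^{-md(x,X_{\rm sing})^2}\,dv_X(x)=o(m^n)$, so this error integrates away; together with the large-$u$ decay from the previous step this yields a uniform dominating bound ($\lesssim u^{-n}$ for $u\le1$, $\lesssim e^{-c'u}$ for $u\ge1$) and lets me pass $m\to\infty$ under the Mellin transform, obtaining $m^{-n}\psi_m(u/m)\to\Psi_\infty(u)$ locally uniformly on $\lbrack0,\infty\lbrack$, where, by the elementary identity $\operatorname{STr}\bigl\lbrack N\,\det(\dot{\mathcal{R}})e^{u\gamma_d}/\det(1-e^{-u\dot{\mathcal{R}}})\bigr\rbrack=-\operatorname{rk}(E)\det(\dot{\mathcal{R}})\sum_{j}(e^{u\lambda_j}-1)^{-1}$,
\[
\Psi_\infty(u)=-\frac{\operatorname{rk}(E)}{(2\pi)^{n+1}}\int_X\det(\dot{\mathcal{R}}(x))\sum_{j=1}^{n}\frac{1}{e^{u\lambda_j(x)}-1}\,dv_X(x).
\]

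\emph{The model zeta function and reassembly.} The final step is the Bismut--Vasserot model computation of $\theta_{b,m}'(0)$ from $\Psi_\infty$. Writing $\int_0^\infty u^{z-1}\Psi_\infty(u)\,du=\tfrac{c_0}{z}+c_1+O(z)$, the residue $c_0$ is the $u^0$-coefficient of $\Psi_\infty$; from $\sum_j(e^{u\lambda_j}-1)^{-1}=\tfrac1u\sum_j\lambda_j^{-1}-\tfrac n2+O(u)$ one reads $c_0=\tfrac n2\tfrac{\operatorname{rk}(E)}{(2\pi)^{n+1}}\int_X\det(\dot{\mathcal{R}})\,dv_X$, which matches the coefficient of $n\log m$ in \eqref{E:5.5.33m}; the finite part follows from $\int_0^\infty u^{z-1}(e^{u\lambda}-1)^{-1}\,du=\lambda^{-z}\Gamma(z)\zeta(z)$ and $\zeta(0)=-\tfrac12,\ \zeta'(0)=-\tfrac12\log(2\pi)$, giving $c_1=-\tfrac{\operatorname{rk}(E)}{(2\pi)^{n+1}}\int_X\det(\dot{\mathcal{R}})\bigl(\tfrac12\log\det\dot{\mathcal{R}}+\tfrac n2\gamma_{\mathrm E}-\tfrac n2\log(2\pi)\bigr)dv_X$ ($\gamma_{\mathrm E}$ Euler's constant), so that $\zeta'(0)$ is exactly what converts the supertrace into $\log\det\dot{\mathcal{R}}$ with the $\log(2\pi)$ normalization. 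Expanding $\tfrac{m^{-z}}{\Gamma(z)}=z+z^2(\gamma_{\mathrm E}-\log m)+O(z^3)$ and differentiating at $z=0$ gives
\[
\theta_{b,m}'(0)=c_0\,m^n\log m-(\gamma_{\mathrm E}c_0+c_1)\,m^n+o(m^n),\qquad -(\gamma_{\mathrm E}c_0+c_1)=\tfrac{\operatorname{rk}(E)}{2(2\pi)^{n+1}}\int_X\det(\dot{\mathcal{R}})\log\det(\dot{\mathcal{R}}/2\pi)\,dv_X .
\]
Finally, using \eqref{e-gue160502b} and \eqref{E:1.5.15m} one has $(-\omega_0)\wedge(d\omega_0)^{\wedge n}=\pm n!\,\det(\dot{\mathcal{R}})\,dv_X$ and hence $e^{-md\omega_0/2\pi}\wedge(-\omega_0)=\tfrac{m^n}{(2\pi)^n}\det(\dot{\mathcal{R}})\,dv_X$ as densities on $X$, so $c_0m^n\log m-(\gamma_{\mathrm E}c_0+c_1)m^n$ collects exactly into $\tfrac{\operatorname{rk}(E)}{4\pi}\int_X\log\det(\tfrac{m\dot{\mathcal{R}}}{2\pi})\,e^{-md\omega_0/2\pi}\wedge(-\omega_0)$, which together with the $o(m^n)$ errors above is \eqref{E:5.5.33m}.

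\emph{Main obstacle.} The genuinely new difficulty — absent in the complex-manifold case of \cite{BV} — is the non-uniform behaviour of $m^{-n}e^{-\frac um\Box_{b,m}}(x,x)$ near the singular stratum: the defect is concentrated near $X_{\rm sing}$ and of size $O(m^n e^{-md(x,X_{\rm sing})^2})$, and one must control both the uniformity in $m$ of the small-$u$ expansion of $m^{-n}\psi_m(u/m)$ and the limits of its coefficients — precisely the content of Theorems~\ref{T:1.6.1} and~\ref{t-gue160427}. Granting those, the remaining delicate points of the present proof are the bookkeeping of the Mellin transform around $z=0$ that isolates the $\log m$ contributed by the rescaling factor $m^{-z}$, the justification (via the dominating bound above) of interchanging $\lim_m$ with the Mellin transform, and the model $\zeta'(0)$-computation that produces $\log\det\dot{\mathcal{R}}$; everything else is routine verification.
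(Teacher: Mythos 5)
Your proposal is correct and follows essentially the same route as the paper's proof of Theorem~\ref{T:5.5.8}: the paper's auxiliary function $\widetilde{\theta}_{b,m}(z)=-M\lbrack m^{-n}\operatorname{STr}\lbrack Ne^{-\frac{t}{m}\Box_{b,m}}\Pi^\perp_m\rbrack\rbrack(z)$ and the factorization $m^{-n}\theta_{b,m}(z)=m^{-z}\widetilde{\theta}_{b,m}(z)$ in \eqref{E:5.5.51} encode exactly your substitution $t=u/m$ and the $m^{-z}$ factor that produces $\log m$; your dominating bound and passage to the $m\to\infty$ limit correspond to the dominated-convergence step in \eqref{E:5.5.53} (using Theorems~\ref{T:1.6.1}, \ref{t-gue160427}, \ref{T:5.5.10}, \ref{T:5.5.11}); your ``elementary identity'' is the paper's Lemma~\ref{l-gue160428}; and your model Mellin/$\zeta$ computation of $c_0$ and $c_1$ reproduces \eqref{E:5.5.57}--\eqref{E:5.5.60}, while your observation that ${\rm Ker\,}\Box^{(q)}_{b,m}=0$ for $q\geq1$ and $m$ large (so $\operatorname{STr}\lbrack N\Pi_m\rbrack=0$) is also used in \eqref{E:5.5.52}. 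The only presentational difference is that the paper isolates the pole contribution by the explicit formula \eqref{E:5.5.13} from Lemma~\ref{L:mero} rather than expanding the raw Mellin integral near $z=0$, which is a slightly cleaner way to justify the interchange of $\lim_m$ and the analytic continuation that you flag as delicate.
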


Note that the proof of Theorem~\ref{t-gue160502w} is based on Theorem~\ref{T:1.6.1}, Theorem~\ref{t-gue160423} and Theorem~\ref{t-gue160427} which are the main technical results of this paper.


This paper is organized as follows. In Section 2, we collect some notations, definitions and terminology we use throughout. In Section 3, we study the asymptotic behavior of the heat kernel $e^{-\frac{t}{m}\Box_{b,m}}(x, x)$ when $m \to+\infty$ as well as when $t \to 0^+$. In Section 4, we introduce the $m$-th Fourier component of the analytic torsion for the CR manifolds with a transversal CR $S^1$-action. In Section 5, we establish the asymptotic formula for the $m$-th Fourier component of the analytic torsion. 



\section{Preliminaries}\label{s:prelim}

\subsection{Some standard notations}\label{s-gue150508b}
We use the following notations: $\mathbb N=\set{1,2,\ldots}$, $\mathbb N_0=\mathbb N\cup\set{0}$, $\Real$ 
is the set of real numbers, $\Real_+:=\set{x\in\Real;\, x>0}$, $\ol\Real_+:=\set{x\in\Real;\, x\geq0}$.
For a multiindex $\alpha=(\alpha_1,\ldots,\alpha_n)\in\mathbb N_0^n$
we set $\abs{\alpha}=\alpha_1+\cdots+\alpha_n$. For $x=(x_1,\ldots,x_n)$ we write
\[
\begin{split}
&x^\alpha=x_1^{\alpha_1}\ldots x^{\alpha_n}_n,\quad
 \pr_{x_j}=\frac{\pr}{\pr x_j}\,,\quad
\pr^\alpha_x=\pr^{\alpha_1}_{x_1}\ldots\pr^{\alpha_n}_{x_n}=\frac{\pr^{\abs{\alpha}}}{\pr x^\alpha}\,.
\end{split}
\]
Let $z=(z_1,\ldots,z_n)$, $z_j=x_{2j-1}+ix_{2j}$, $j=1,\ldots,n$, be coordinates of $\Complex^n$.
We write
\[
\begin{split}
&z^\alpha=z_1^{\alpha_1}\ldots z^{\alpha_n}_n\,,\quad\ol z^\alpha=\ol z_1^{\alpha_1}\ldots\ol z^{\alpha_n}_n\,,\\
&\pr_{z_j}=\frac{\pr}{\pr z_j}=
\frac{1}{2}\Big(\frac{\pr}{\pr x_{2j-1}}-i\frac{\pr}{\pr x_{2j}}\Big)\,,\quad\pr_{\ol z_j}=
\frac{\pr}{\pr\ol z_j}=\frac{1}{2}\Big(\frac{\pr}{\pr x_{2j-1}}+i\frac{\pr}{\pr x_{2j}}\Big),\\
&\pr^\alpha_z=\pr^{\alpha_1}_{z_1}\ldots\pr^{\alpha_n}_{z_n}=\frac{\pr^{\abs{\alpha}}}{\pr z^\alpha}\,,\quad
\pr^\alpha_{\ol z}=\pr^{\alpha_1}_{\ol z_1}\ldots\pr^{\alpha_n}_{\ol z_n}=
\frac{\pr^{\abs{\alpha}}}{\pr\ol z^\alpha}\,.
\end{split}
\]

Let $X$ be a $C^\infty$ orientable paracompact manifold. 
We let $TX$ and $T^*X$ denote the tangent bundle of $X$ and the cotangent bundle of $X$ respectively.
The complexified tangent bundle of $X$ and the complexified cotangent bundle of $X$ 
will be denoted by $\Complex TX$ and $\Complex T^*X$ respectively. We write $\langle\,\cdot\,,\cdot\,\rangle$ 
to denote the pointwise duality between $T^*X$ and $TX$.
We extend $\langle\,\cdot\,,\cdot\,\rangle$ bilinearly to $\Complex T^*X\times\Complex TX$. For $u\in \Complex T^*X$, $v\in\Complex TX$, we also write $u(v):=\langle\,u\,,v\,\rangle$.

Let $E$ be a $C^\infty$ vector bundle over $X$. The fiber of $E$ at $x\in X$ will be denoted by $E_x$.
Let $F$ be another vector bundle over $X$. We write 
$E\boxtimes F$ to denote the vector bundle over $X\times X$ with fiber over $(x, y)\in X\times X$ 
consisting of the linear maps from $E_x$ to $F_y$.  

Let $Y\subset X$ be an open set. The spaces of
smooth sections of $E$ over $Y$ and distribution sections of $E$ over $Y$ will be denoted by $C^\infty(Y, E)$ and $D'(Y, E)$ respectively.
Let $E'(Y, E)$ be the subspace of $D'(Y, E)$ whose elements have compact support in $Y$.
For $m\in\Real$, we let $H^m(Y, E)$ denote the Sobolev space
of order $m$ of sections of $E$ over $Y$. Put
\begin{gather*}
H^m_{\rm loc\,}(Y, E)=\big\{u\in D'(Y, E);\, \varphi u\in H^m(Y, E),
      \,\forall \varphi\in C^\infty_0(Y)\big\}\,,\\
      H^m_{\rm comp\,}(Y, E)=H^m_{\rm loc}(Y, E)\cap E'(Y, E)\,.
\end{gather*}

\subsection{Set up and terminology}\label{s-gue150508bI} 

Let $(X, T^{1,0}X)$ be a compact CR manifold of dimension $2n+1$, $n\geq 1$, where $T^{1,0}X$ is a CR structure of $X$. That is $T^{1,0}X$ is a subbundle of rank $n$ of the complexified tangent bundle $\mathbb{C}TX$, satisfying $T^{1,0}X\cap T^{0,1}X=\{0\}$, where $T^{0,1}X=\overline{T^{1,0}X}$, and $[\mathcal V,\mathcal V]\subset\mathcal V$, where $\mathcal V=C^\infty(X, T^{1,0}X)$. We assume that $X$ admits a $S^1$ action: $S^1\times X\rightarrow X$. We write $e^{i\theta}$ to denote the $S^1$ action. Let $T\in C^\infty(X, TX)$ be the global real vector field induced by the $S^1$ action given by 
$(Tu)(x)=\frac{\partial}{\partial\theta}\left(u(e^{i\theta}\circ x)\right)|_{\theta=0}$, $u\in C^\infty(X)$. 

\begin{definition}\label{d-gue160502}
We say that the $S^1$ action $e^{i\theta}$ is CR if
$[T, C^\infty(X, T^{1,0}X)]\subset C^\infty(X, T^{1,0}X)$ and the $S^1$ action is transversal if for each $x\in X$,
$\Complex T(x)\oplus T_x^{1,0}X\oplus T_x^{0,1}X=\mathbb CT_xX$. Moreover, we say that the $S^1$ action is locally free if $T\neq0$ everywhere. 
\end{definition}

We assume throughout that $(X, T^{1,0}X)$ is a compact connected CR manifold with a transversal CR locally free $S^1$ action $e^{i\theta}$ and we let $T$ be the global vector field induced by the $S^1$ action. Let $\omega_0\in C^\infty(X,T^*X)$ be the global real one form determined by $\langle\,\omega_0\,,\,u\,\rangle=0$, for every $u\in T^{1,0}X\oplus T^{0,1}X$ and $\langle\,\omega_0\,,\,T\,\rangle=-1$. 
Assume $X=X_{p_1}\bigcup X_{p_2}\bigcup\cdots\bigcup X_{p_k}$ (see \eqref{e-gue150802bm}), $p=:p_1<p_2<\cdots<p_k$. In this work, we assume that $p_1=1$ 
and we denote $X_{{\rm reg\,}}:=X_{p_1}=X_1$. 

\begin{definition}\label{d-gue150508f}
For $p\in X$, the Levi form $\mathcal L_p$ is the Hermitian quadratic form on $T^{1,0}_pX$ given by 
$\mathcal{L}_p(U,\ol V)=-\frac{1}{2i}\langle\,d\omega_0(p)\,,\,U\wedge\ol V\,\rangle$, $U, V\in T^{1,0}_pX$.
\end{definition}

If the Levi form $\mathcal{L}_p$ is positive definite, we say that $X$ is strongly pseudoconvex at $p$. If the Levi form is positive definite at every point of $X$, we say that $X$ is strongly pseudoconvex. We assume throughout that $(X, T^{1,0}X)$ is strongly pseudoconvex. 

Denote by $T^{*1,0}X$ and $T^{*0,1}X$ the dual bundles of
$T^{1,0}X$ and $T^{0,1}X$ respectively. Define the vector bundle of $(0,q)$ forms by
$T^{*0,q}X=\Lambda^q(T^{*0,1}X)$. Put $T^{*0,\bullet}X:=\oplus_{j\in\set{0,1,\ldots,n}}T^{*0,j}X$.
Let $D\subset X$ be an open subset. Let $\Omega^{0,q}(D)$
denote the space of smooth sections of $T^{*0,q}X$ over $D$ and let $\Omega_0^{0,q}(D)$
be the subspace of $\Omega^{0,q}(D)$ whose elements have compact support in $D$. Put 
\[\begin{split}
&\Omega^{0,\bullet}(D):=\oplus_{j\in\set{0,1,\ldots,n}}\Omega^{0,j}(D),\\
&\Omega^{0,\bullet}_0(D):=\oplus_{j\in\set{0,1,\ldots,n}}\Omega^{0,j}_0(D).\end{split}\]
Similarly, if $E$ is a vector bundle over $D$, then we let $\Omega^{0,q}(D,E)$ denote the space of smooth sections of $T^{*0,q}X\otimes E$ over $D$ and let $\Omega_0^{0,q}(D,E)$ be the subspace of $\Omega^{0,q}(D, E)$ whose elements have compact support in $D$. Put 
 \[\begin{split}
&\Omega^{0,\bullet}(D,E):=\oplus_{j\in\set{0,1,\ldots,n}}\Omega^{0,j}(D,E),\\
&\Omega^{0,\bullet}_0(D,E):=\oplus_{j\in\set{0,1,\ldots,n}}\Omega^{0,j}_0(D,E).\end{split}\]

Fix $\theta_0\in]-\pi, \pi[$, $\theta_0$ small. Let
$$d e^{i\theta_0}: \mathbb CT_x X\rightarrow \mathbb CT_{e^{i\theta_0}x}X$$
denote the differential map of $e^{i\theta_0}: X\rightarrow X$. By the CR property of the $S^1$ action, we can check that
\begin{equation}\label{e-gue150508fa}
\begin{split}
de^{i\theta_0}:T_x^{1,0}X\rightarrow T^{1,0}_{e^{i\theta_0}x}X,\\
de^{i\theta_0}:T_x^{0,1}X\rightarrow T^{0,1}_{e^{i\theta_0}x}X,\\
de^{i\theta_0}(T(x))=T(e^{i\theta_0}x).
\end{split}
\end{equation}
Let $(e^{i\theta_0})^*:\Lambda^r(\Complex T^*X)\To\Lambda^r(\Complex T^*X)$ be the pull-back map by $e^{i\theta_0}$, $r=0,1,\ldots,2n+1$. From \eqref{e-gue150508fa}, it is easy to see that for every $q=0,1,\ldots,n$, 
\begin{equation}\label{e-gue150508faI}
(e^{i\theta_0})^*:T^{*0,q}_{e^{i\theta_0}x}X\To T^{*0,q}_{x}X.
\end{equation}
Let $u\in\Omega^{0,q}(X)$. Define
\begin{equation}\label{e-gue150508faII}
Tu:=\frac{\pr}{\pr\theta}\bigr((e^{i\theta})^*u\bigr)|_{\theta=0}\in\Omega^{0,q}(X).
\end{equation}
(See also \eqref{lI}.) For every $\theta\in\Real$ and every $u\in C^\infty(X,\Lambda^r(\Complex T^*X))$, we write $u(e^{i\theta}\circ x):=(e^{i\theta})^*u(x)$. It is clear that for every $u\in C^\infty(X,\Lambda^r(\Complex T^*X))$, we have 
\begin{equation}\label{e-gue150510f}
u(x)=\sum_{m\in\mathbb Z}\frac{1}{2\pi}\int^{\pi}_{-\pi}u(e^{i\theta}\circ x)e^{-im\theta}d\theta.
\end{equation}

Let $\ddbar_b:\Omega^{0,q}(X)\rightarrow\Omega^{0,q+1}(X)$ be the tangential Cauchy-Riemann operator. From the CR property of the $S^1$ action, it is straightforward to see that (see also \eqref{e-gue150514f})
\[T\ddbar_b=\ddbar_bT\ \ \mbox{on $\Omega^{0,\bullet}(X)$}.\]

\begin{definition}\label{d-gue50508d}
Let $D\subset U$ be an open set. We say that a function $u\in C^\infty(D)$ is rigid if $Tu=0$. We say that a function $u\in C^\infty(X)$ is Cauchy-Riemann (CR for short)
if $\ddbar_bu=0$. We call $u$ a rigid CR function if  $\ddbar_bu=0$ and $Tu=0$.
\end{definition}

\begin{definition} \label{d-gue150508dI}
Let $F$ be a complex vector bundle over $X$. We say that $F$ is rigid (CR) if 
$X$ can be covered with open sets $U_j$ with trivializing frames $\set{f^1_j,f^2_j,\dots,f^r_j}$, $j=1,2,\ldots$, such that the corresponding transition matrices are rigid (CR). The frames $\set{f^1_j,f^2_j,\dots,f^r_j}$, $j=1,2,\ldots$, are called rigid (CR) frames. 
\end{definition}

\begin{definition}\label{d-gue150514f}
Let $F$ be a complex rigid vector bundle over $X$ and let $\langle\,\cdot\,|\,\cdot\,\rangle_F$ be a Hermitian metric on $F$. We say that $\langle\,\cdot\,|\,\cdot\,\rangle_F$ is a rigid Hermitian metric if for every rigid local frames $f_1,\ldots, f_r$ of $F$, we have $T\langle\,f_j\,|\,f_k\,\rangle_F=0$, for every $j,k=1,2,\ldots,r$. 
\end{definition} 

It is well-known that there is a rigid Hermitian metric on any rigid vector bundle $F$ (see Theorem 2.10 in~\cite{CHT} and Theorem 10.5 in~\cite{Hsiao14}). Note that  Baouendi-Rothschild-Treves~\cite{BRT85} proved that $T^{1,0}X$ is a rigid complex vector bundle over $X$.

From now on, let $E$ be a rigid CR vector bundle over $X$ and we take a rigid Hermitian metric $\langle\,\cdot\,|\,\cdot\,\rangle_E$ on $E$ and take a rigid Hermitian metric $\langle\,\cdot\,|\,\cdot\,\rangle$ on $\Complex TX$ such that $T^{1,0}X\perp T^{0,1}X$, $T\perp (T^{1,0}X\oplus T^{0,1}X)$, $\langle\,T\,|\,T\,\rangle=1$.
The Hermitian metrics on $E$ and $\Complex TX$ induce Hermitian metrics $\langle\,\cdot\,|\,\cdot\,\rangle$ and $\langle\,\cdot\,|\,\cdot\,\rangle_E$ on $T^{*0,\bullet}X$ and $T^{*0,\bullet}X\otimes E$ respectively. Let $A(x,y)\in (T^{*,\bullet}_yX\otimes E_y)\boxtimes (T^{*,\bullet}_xX\otimes E_x)$. We write $\abs{A(x,y)}$ to denote the natural matrix norm of $A(x,y)$ induced by $\langle\,\cdot\,|\,\cdot\,\rangle_{E}$.
We denote by $dv_X=dv_X(x)$ the volume form on $X$ induced by the fixed 
Hermitian metric $\langle\,\cdot\,|\,\cdot\,\rangle$ on $\Complex TX$. Then we get natural global $L^2$ inner products $(\,\cdot\,|\,\cdot\,)_{E}$, $(\,\cdot\,|\,\cdot\,)$
on $\Omega^{0,\bullet}(X,E)$ and $\Omega^{0,\bullet}(X)$ respectively. We denote by $L^2(X,T^{*0,q}X
\otimes E)$ and $L^2(X,T^{*0,q}X)$ the completions of $\Omega^{0,q}(X,E)$ and $\Omega^{0,q}(X)$ with respect to $(\,\cdot\,|\,\cdot\,)_{E}$ and $(\,\cdot\,|\,\cdot\,)$ respectively. Similarly, we denote by $L^2(X,T^{*0,\bullet}X
\otimes E)$ and $L^2(X,T^{*0,\bullet}X)$ the completions of $\Omega^{0,\bullet}(X,E)$ and $\Omega^{0,\bullet}(X)$ with respect to $(\,\cdot\,|\,\cdot\,)_{E}$ and $(\,\cdot\,|\,\cdot\,)$ respectively. We extend $(\,\cdot\,|\,\cdot\,)_{E}$ and $(\,\cdot\,|\,\cdot\,)$ to $L^2(X,T^{*0,\bullet}X\otimes E)$ and $L^2(X,T^{*0,\bullet}X)$ in the standard way respectively.  For $f\in L^{2}(X,T^{*0,\bullet}X\otimes E)$, we denote $\norm{f}^2_{E}:=(\,f\,|\,f\,)_{E}$. Similarly, for $f\in L^{2}(X,T^{*0,\bullet}X)$, we denote $\norm{f}^2:=(\,f\,|\,f\,)$. 

We also write $\ddbar_b$ to denote the tangential Cauchy-Riemann operator acting on forms with values in $E$:
\[\ddbar_b:\Omega^{0,\bullet}(X, E)\To\Omega^{0,\bullet}(X,E).\]
Since $E$ is rigid, we can also define $Tu$ for every $u\in\Omega^{0,q}(X,E)$ and we have 
\begin{equation}\label{e-gue150508d}
T\ddbar_b=\ddbar_bT\ \ \mbox{on $\Omega^{0,\bullet}(X,E)$}.
\end{equation}
For every $m\in\mathbb Z$, let
\begin{equation}\label{e-gue150508dI}
\begin{split}
&\Omega^{0,q}_m(X,E):=\set{u\in\Omega^{0,q}(X,E);\, Tu=imu},\ \ q=0,1,2,\ldots,n,\\
&\Omega^{0,\bullet}_m(X,E):=\set{u\in\Omega^{0,\bullet}(X,E);\, Tu=imu}.
\end{split}
\end{equation} 
For each $m\in\mathbb Z$, we denote by $L^2_m(X,T^{*0,q}X\otimes E)$ and $L^2_m(X,T^{*0,q}X)$ the completions of $\Omega^{0,q}_m(X,E)$ and $\Omega^{0,q}_m(X)$ with respect to $(\,\cdot\,|\,\cdot\,)_{E}$ and $(\,\cdot\,|\,\cdot\,)$ respectively. Similarly, we denote by $L^2_m(X,T^{*0,\bullet}X\otimes E)$ and $L^2_m(X,T^{*0,\bullet}X)$ the completions of $\Omega^{0,\bullet}_m(X,E)$ and $\Omega^{0,\bullet}_m(X)$ with respect to $(\,\cdot\,|\,\cdot\,)_{E}$ and $(\,\cdot\,|\,\cdot\,)$ respectively.

\section{Asymptotic expansion of heat kernels}

\subsection{Heat kernels of the Kohn Laplacians}

Since $T\ddbar_b=\ddbar_bT$, we have 
\[\ddbar_{b,m}:=\ddbar_b:\Omega^{0,\bullet}_m(X,E)\To\Omega^{0,\bullet}_m(X,E),\ \ \forall m\in\mathbb Z.\] 
We also write
\[\ol{\pr}^{*}_b:\Omega^{0,\bullet}(X,E)\To\Omega^{0,\bullet}(X,E)\]
to denote the formal adjoint of $\ddbar_b$ with respect to $(\,\cdot\,|\,\cdot\,)_E$. Since $\langle\,\cdot\,|\,\cdot\,\rangle_E$ and $\langle\,\cdot\,|\,\cdot\,\rangle$ are rigid, we can check that 
\begin{equation}\label{e-gue150517i}
\begin{split}
&T\ddbar^{*}_b=\ddbar^{*}_bT\ \ \mbox{on $\Omega^{0,\bullet}(X,E)$},\\
&\ddbar^{*}_{b,m}:=\ddbar^{*}_b:\Omega^{0,\bullet}_m(X,E)\To\Omega^{0,\bullet}_m(X,E),\ \ \forall m\in\mathbb Z.
\end{split}
\end{equation}
Now, we fix $m\in\mathbb Z$. The $m$-th Fourier component of Kohn Laplacian is given by
\begin{equation}\label{e-gue151113y}
\Box_{b,m}:=(\ddbar_{b,m}+\ddbar^*_{b,m})^2:\Omega^{0,\bullet}_m(X,E)\To\Omega^{0,\bullet}_m(X,E).
\end{equation}
We extend $\Box_{b,m}$ to $L^{2}_m(X,T^{*0,\bullet}X\otimes E)$ by 
\begin{equation}\label{e-gue151113yI}
\Box_{b,m}:{\rm Dom\,}\Box_{b,m}\subset L^{2}_m(X,T^{*0,\bullet}X\otimes E)\To L^{2}_m(X,T^{*0,\bullet}X\otimes E)\,,
\end{equation}
where ${\rm Dom\,}\Box_{b,m}:=\{u\in L^{2}_m(X,T^{*0,\bullet}X\otimes E);\, \Box_{b,m}u\in L^{2}_m(X,T^{*0,\bullet}X\otimes E)\}$, where for any $u\in L^{2}_m(X,T^{*0,\bullet}X\otimes E)$, $\Box_{b,m}u$ is defined in the sense of distribution. 
It is well-known that $\Box_{b,m}$ is self-adjoint, ${\rm Spec\,}\Box_{b,m}$ is a discrete subset of $[0,\infty[$ and for every $\nu\in{\rm Spec\,}\Box_{b,m}$, $\nu$ is an eigenvalue of $\Box_{b,m}$ (see Section 3 in~\cite{CHT}).
For every $\nu\in{\rm Spec\,}\Box_{b,m}$, let $\set{f^\nu_1,\ldots,f^\nu_{d_{\nu}}}$ be an orthonormal frame for the eigenspace of $\Box_{b,m}$ with eigenvalue $\nu$. The heat kernel $e^{-t\Box_{b,m}}(x,y)$ is given by 
\begin{equation}\label{e-gue151023a}
e^{-t\Box_{b,m}}(x,y)=\sum_{\nu\in{\rm Spec\,}\Box_{b,m}}\sum^{d_{\nu}}_{j=1}e^{-\nu t}f^\nu_j(x)\otimes(f^\nu_j(y))^\dagger,
\end{equation}
where $f^\nu_j(x)\otimes(f^\nu_j(y))^\dagger$ denotes the linear map: 
\[\begin{split}
f^\nu_j(x)\otimes(f^\nu_j(y))^\dagger:T^{*0,\bullet}_yX\otimes E_y&\To T^{*0,\bullet}_xX\otimes E_x,\\
u(y)\in T^{*0,\bullet}_yX\otimes E_y&\To f^\nu_j(x)\langle\,u(y)\,|\,f^\nu_j(y)\,\rangle_E\in T^{*0,\bullet}_xX\otimes E_x.\end{split}\]
Let $e^{-t\Box_{b,m}}:L^2(X,T^{*0,\bullet}X\otimes E)\To L^2_m(X,T^{*0,\bullet}X\otimes E)$ be the continuous operator with distribution kernel $e^{-t\Box_{b,m}}(x,y)$.

We denote by $\dot{\mathcal{R}}$ the Hermitian matrix $\dot{\mathcal{R}} \in \operatorname{End}(T^{1,0}X)$ such that for $V, W \in T^{1,0}X$,
\begin{equation}\label{E:1.5.15}
i d\omega_0(V, \overline{W}) = \langle\,\dot{\mathcal{R}} V\,|\,\overline{W}\,\rangle.
\end{equation}
Let $\{ \omega_j \}_{j=1}^n$ be a local orthonormal frame of $T^{1,0}X$ with dual frame $\{ \omega^j \}_{j=1}^n$.
Set 
\begin{equation}
\gamma_d = - i\sum^n_{l,j=1} d\omega_0(\omega_j, \overline{\omega}_l) \overline{\omega}^l \wedge \iota_{\overline{\omega}_j},
\end{equation}
where $\iota_{\overline{\omega}_j}$ denotes the interior product of $\overline{\omega}_j$.
Then $\gamma_d \in \operatorname{End}(T^{*0,\bullet}X)$ and $-id\omega_0$ acts as the derivative $\gamma_d$ on $T^{*0,\bullet}X$. 
If we choose $\{ \omega_j \}_{j=1}^n$ to be an orthonormal basis of $T^{1,0}X$ such that
\begin{equation}\label{e-gue160127b}
\dot{\mathcal{R}}(x) = \operatorname{diag} (a_1(x), \cdots, a_n(x)) \in \operatorname{End}(T_x^{1,0}X),
\end{equation}\
then
\begin{equation}\label{E:1.5.19}
\gamma_d(x) = -\sum^n_{j=1} a_j(x) \overline{\omega}^j \wedge \iota_{\overline{\omega}_j}.
\end{equation}
Define ${\rm det\,}\dot{\mathcal{R}}(x):=a_1(x)\cdots a_n(x)$. 

We pause and introduce some notations. Let $F$ be a vector bundle over a smooth manifold $M$ and let $H(x,m), G(x,m)\in C^\infty(M,F)$ be $m$-dependent smooth functions. 
We write $H(x,m)=G(x,m)+o(m^p)$ in $C^\ell(M,F)$ locally uniformly on $M$, where $p\in\Real$, $\ell\in\mathbb N_0$, if for any compact subset $K\Subset M$ and every $\epsilon>0$, there is a $m_0>0$ independent of $m$ such that for all $m\geq m_0$ and for all $x\in K$, we have
\begin{equation}\label{e-gue160127}
\abs{H(x,m)-G(x,m)}_{C^\ell(K,F)}\leq \epsilon m^p.
\end{equation}
When $M$ is compact, we omit locally.

Similarly, we write $H(x,m)=G(x,m)+O(m^p)$ in $C^\ell(M,F)$ locally uniformly on $M$, where $p\in\Real$, $\ell\in\mathbb N_0$, if for any compact subset $K\Subset M$, there are $m_0>0$ and $C_K>0$ independent of $m$ such that for all $m\geq m_0$ and for all $x\in K$, we have
\begin{equation}\label{e-gue160127I}
\abs{H(x,m)-G(x,m)}_{C^\ell(K,F)}\leq C_Km^p.
\end{equation}
When $M$ is compact, we omit locally.

Let $B(t,x)\in C^\infty(\Real_+\times M,F)$. 
We write $B(t,x)=\sum^k_{j=-p}B_j(x)t^j+O(t^{k_1})$ in $C^\ell(M,F)$ locally uniformly on $M$, where $k, k_1, p\in\mathbb Z$ and $B_j(x)\in C^\infty(M,F)$, $j=-p,-p+1,\ldots,k$, if for any compact subset $K\Subset M$, there are constants $C_K>0$ and $\varepsilon>0$ independent of $t$ such that
\begin{equation}\label{e-gue160126gI}
\abs{B(t,x)-\sum^k_{j=-p}B_j(x)t^j}_{C^\ell(K,F)}\leq C_Kt^{k_1},\ \ \forall 0<t<\varepsilon.
\end{equation}
When $M$ is compact, we omit locally. 

We return to our situation. Let $A_\ell\in C^\infty(X, \operatorname{End}(T^{*0,\bullet}X))$, $\ell=-n,-n+1,\ldots$, such that for every $k\in\mathbb Z$, 
\begin{equation}\label{E:5.5.34}
(2\pi)^{-n-1} \frac{\det(\dot{\mathcal{R}}) \exp(t \gamma_d)}{\det(1-\exp(-t\dot{\mathcal{R}}))}(x)=\sum_{\ell=-n}^k A_\ell(x)t^\ell +O(t^{k+1})
\end{equation}
in $C^0(X,\operatorname{End}(T^{*0,\bullet}X))$ uniformly on $X$.

Fix $x, y\in X$. Let $d(x,y)$ denote the standard Riemannian distance of $x$ and $y$ with respect to the given Hermitian metric. Put $d(x,X_{{\rm sing\,}}):=\inf\set{d(x,y);\, y\in X_{{\rm sing\,}}}$.

The first goal of this section is to prove the following

\begin{theorem}\label{T:1.6.1}
With the notations used above, fix $I\Subset\Real_+$. 
For every $\epsilon>0$, there are $m_0>0$, $\varepsilon_0>0$ and $C>0$ such that for all $m\geq m_0$, we have
\begin{equation}\label{E:1.6.4}
\begin{split}
&\Big|e^{-\frac{t}{m} \Box_{b,m}}(x,x)-(2\pi)^{-n-1} \frac{\det(\dot{\mathcal{R}}) \exp(t \gamma_d)}{\det(1-\exp(-t\dot{\mathcal{R}}))}(x)  \otimes \operatorname{Id}_{E_x}m^n\Big|\\
&\leq \epsilon m^n+Cm^{n}e^{-\varepsilon_0md(x,X_{{\rm sing\,}})^2},\ \ \forall (t,x)\in I\times X_{{\rm reg\,}}.
\end{split}
\end{equation}
Here we use the convention that if an eigenvalue $a_j(x)$ of $\dot{\mathcal{R}}(x)$ is zero, then its contribution for $\frac{\det(\dot{\mathcal{R}})}{\det(1-\exp(-t\dot{\mathcal{R}}))}(x)$ is $\frac{1}{t}$ (see \eqref{e-gue160127b}). 

\end{theorem}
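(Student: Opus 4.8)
The plan is to establish \eqref{E:1.6.4} by localizing the heat kernel $e^{-\frac{t}{m}\Box_{b,m}}(x,x)$ near each regular point $x\in X_{\rm reg}$ and comparing it with a model operator. Fix $x_0\in X_{\rm reg}$. Since the $S^1$-action is transversal and locally free, we can use BRT coordinates (Baouendi-Rothschild-Treves) centered at $x_0$ in which $T=\frac{\pr}{\pr\theta}$ and $X$ looks locally like an open set in $\Real\times\Complex^n$; the rigidity of the Hermitian metrics and of $E$ ensures that $\Box_{b,m}$ in such coordinates is, up to a unitary twist by $e^{im\theta}$, a $\theta$-independent second-order operator acting on functions of the $\Complex^n$-variable. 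Concretely, conjugating $\Box_{b,m}$ by this phase and freezing coefficients at $x_0$ produces the model operator whose heat kernel on the diagonal is exactly the Mehler-type expression $(2\pi)^{-n-1}\frac{\det(\dot{\mathcal R})\exp(t\gamma_d)}{\det(1-\exp(-t\dot{\mathcal R}))}(x_0)\otimes{\rm Id}_{E_{x_0}}m^n$ — this is the same local model that appears in Bismut-Vasserot \cite{BV} and \cite[Sect.\ 5.5]{MM}, now transferred to the CR setting via the operator $A_m$ of \cite{CHT} on the circle-bundle side and its abstract analogue here.

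The next step is to control the error in this approximation uniformly in $m$ and uniformly for $x$ in the regular set, and this is where the term $Cm^ne^{-\varepsilon_0md(x,X_{\rm sing})^2}$ enters. The point is that the localization radius one is allowed to use shrinks as $x$ approaches $X_{\rm sing}$: near a singular point the $S^1$-orbits have smaller period, so the identification of a neighborhood of $x$ with a product $\Real\times(\text{ball in }\Complex^n)$ degenerates, and the scaling $t\mapsto t/m$ that makes the model exact only works on a ball of radius comparable to $d(x,X_{\rm sing})$. I would therefore run a rescaling argument: introduce normal coordinates $z\mapsto z/\sqrt m$ as in \cite[Sect.\ 4.2]{B}, \cite[Sect.\ 1.6]{MM}, so that $\frac1m\Box_{b,m}$ after rescaling converges to the harmonic-oscillator model operator $\Box^0$ on $\Complex^n$ in the appropriate Sobolev norms, with remainder terms carrying a Gaussian cutoff $e^{-c|z|^2}$ off the diagonal. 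The off-diagonal Gaussian decay of the model heat kernel, evaluated at the scale where the rescaled coordinate reaches the boundary of the valid chart, i.e.\ $|z|\sim\sqrt m\,d(x,X_{\rm sing})$, produces precisely the factor $e^{-\varepsilon_0md(x,X_{\rm sing})^2}$. Combining the local model computation (which gives the main term up to $\epsilon m^n$ once $m$ is large, using that $t$ ranges over a compact set $I\Subset\Real_+$) with this boundary/cutoff estimate yields \eqref{E:1.6.4}. The convention about vanishing eigenvalues $a_j(x)$ is automatic from the model: a zero eigenvalue contributes a flat direction whose one-dimensional heat kernel on the diagonal is $(4\pi t)^{-1/2}$-type in the rescaled picture, matching the prescribed factor $\frac1t$ after the normalization is absorbed.

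The main obstacle, as the authors themselves flag in Section~\ref{s-gue160502}, is the uniformity of the estimates as $x\to X_{\rm sing}$: in the globally free case \cite{BV} one simply has a single scale and the error is $o(m^n)$ everywhere, but here one must track how all constants in the spectral-gap and parametrix estimates for $\Box_{b,m}$ depend on the period $\frac{2\pi}{\ell}$ of the nearby orbit, and show this dependence is exactly exponential in $m\,d(x,X_{\rm sing})^2$. Technically this means the localization (finite propagation speed / Sobolev estimates for $\Box_{b,m}$) and the rescaling limit must be carried out with explicit control of the chart size, and one needs a uniform lower bound on the small eigenvalues of the rescaled operator away from a ball of radius proportional to $\sqrt m\, d(x,X_{\rm sing})$. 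A subsidiary technical point is verifying that the $m$-dependence enters $\Box_{b,m}$ only through the phase $e^{im\theta}$ conjugation and lower-order rigid terms, so that the rescaled operator genuinely converges to the $m$-independent model; this uses crucially the rigidity hypotheses on $E$ and on $\langle\,\cdot\,|\,\cdot\,\rangle$, together with the structure $\Complex T(x)\oplus T^{1,0}_xX\oplus T^{0,1}_xX=\Complex T_xX$.
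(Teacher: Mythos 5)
Your proposal gets the local model right (BRT coordinates, rigidity reducing $\Box_{b,m}$ locally to a Kodaira Laplacian $\Box_{B,m}$ conjugated by $e^{im\theta}$, the Mehler formula after rescaling, and the complex-geometry heat kernel asymptotics of Bismut--Vasserot and Ma--Marinescu), and you correctly flag uniformity near $X_{\rm sing}$ as the essential difficulty. But the mechanism you propose for the exponential correction $Cm^ne^{-\varepsilon_0 m\,d(x,X_{\rm sing})^2}$ --- a localization radius that shrinks like $d(x,X_{\rm sing})$, so that after the $z\mapsto z/\sqrt{m}$ rescaling the chart boundary sits at $|z|\sim\sqrt{m}\,d(x,X_{\rm sing})$ --- is not the right one, and it is the heart of the matter.

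In the paper's argument the BRT charts have \emph{fixed} size $\zeta$ (independent of $x$, chosen once and for all via \eqref{e-gue160327}); nothing degenerates in the chart itself, and the BRT theorem applies uniformly because the action is locally free ($T\neq 0$ everywhere). The exponential term comes instead from the $S^1$-averaging that relates the CR heat kernel to the local Kodaira heat kernels. One shows (Theorem~\ref{t-gue150630I}) that, up to an $O(e^{-m\epsilon_0/t})$ error, $e^{-\frac{t}{m}\Box_{b,m}}(x,x)$ equals
\[
\Gamma_m(t,x,x)=\frac{1}{2\pi}\sum_{j}\int_{-\pi}^{\pi}H_{j,m}(t,x,e^{iu}\circ x)\,e^{imu}\,du,
\]
where $H_{j,m}$ is built from the local Kodaira heat kernel $A_{B_j,m}$ on the BRT chart. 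The piece $|u|<\zeta$ reproduces exactly $\frac{1}{2\pi}\chi_j(x)A_{B_j,m}(t,z,z)$, i.e.\ the Mehler main term with $o(m^n)$ error. The piece $u\in[\zeta,2\pi-\zeta]$ is where the geometry of the orbit enters: for such $u$, the $z$-coordinate of $e^{iu}\circ x$ satisfies $|z-\tilde z|\gtrsim d(x,X_{\rm sing})$ (this is the claim \eqref{e-gue160327I}, resting on \eqref{e-gue160423}, which quantifies how close an orbit comes to itself in terms of the distance to the singular set). Feeding this into the off-diagonal Gaussian decay of $A_{B_j,m}$ (estimate \eqref{e-gue160128w}) and using $t\in I\Subset\Real_+$ bounded away from $0$ yields the factor $Cm^ne^{-\varepsilon_0 m\,d(x,X_{\rm sing})^2}$. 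In other words, the exponential is an \emph{orbit self-intersection} effect inside a fixed chart, not a chart-shrinkage effect.

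This distinction is not cosmetic: your proposal, as written, has no mechanism to produce the non-trivial $u$ contributions at all, because by conjugating to a $\theta$-independent operator on $\Complex^n$ and rescaling there you have implicitly discarded the fact that $\Box_{b,m}$ acts on $S^1$-Fourier components of global sections on $X$, and its heat kernel is a projection of a globally defined kernel onto the $m$-th Fourier mode. That projection is exactly the $\int_{-\pi}^{\pi}\cdots e^{imu}\,du$ in the formula above (cf.\ \eqref{e-gue150510f}), and it is what couples points on the same orbit. To repair the proposal you would need to build this averaging into the argument explicitly --- essentially rediscovering \eqref{e-gue150626fIII} and \eqref{e-gue160327I}--\eqref{e-gue160423} --- and then the rescaling/Mehler analysis you describe is used (inside each BRT chart) to establish the uniform-in-$m$ control of $A_{B_j,m}$ rather than to generate the exponential error directly.
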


\subsection{BRT trivializations}

To prove Theorem~\ref{T:1.6.1}, we need some preparations. We first need the following result due to Baouendi-Rothschild-Treves~\cite{BRT85}.

\begin{theorem}\label{t-gue150514}
For every point $x_0\in X$, we can find local coordinates $x=(x_1,\cdots,x_{2n+1})=(z,\theta)=(z_1,\cdots,z_{n},\theta), z_j=x_{2j-1}+ix_{2j},j=1,\cdots,n, x_{2n+1}=\theta$, defined in some small neighborhood $D=\{(z, \theta): \abs{z}<\delta, -\varepsilon_0<\theta<\varepsilon_0\}$ of $x_0$, $\delta>0$, $0<\varepsilon_0<\pi$, such that $(z(x_0),\theta(x_0))=(0,0)$ and 
\begin{equation}\label{e-can}
\begin{split}
&T=\frac{\partial}{\partial\theta}\\
&Z_j=\frac{\partial}{\partial z_j}+i\frac{\partial\varphi}{\partial z_j}(z)\frac{\partial}{\partial\theta},j=1,\cdots,n
\end{split}
\end{equation}
where $Z_j(x), j=1,\cdots, n$, form a basis of $T_x^{1,0}X$, for each $x\in D$ and $\varphi(z)\in C^\infty(D,\mathbb R)$ independent of $\theta$. We call $(D,(z,\theta),\varphi)$ BRT trivialization.
\end{theorem}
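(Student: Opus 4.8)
The plan is to reduce the statement to a normal form for a single real vector field $T$ together with its compatibility with the CR structure, exploiting transversality in an essential way. First I would fix $x_0\in X$ and choose coordinates so that near $x_0$ the orbit map of the $S^1$-action straightens out: since the action is locally free, $T(x_0)\neq 0$, and by the flow-box theorem for the (real) vector field $T$ we can find coordinates $(u_1,\dots,u_{2n},\theta)$ in a neighborhood of $x_0$ with $T=\partial/\partial\theta$ and $(u(x_0),\theta(x_0))=(0,0)$. Transversality, $\Complex T(x)\oplus T^{1,0}_xX\oplus T^{0,1}_xX=\Complex T_xX$, guarantees that the remaining $2n$ coordinates $u=(u_1,\dots,u_{2n})$ can be taken to parametrize a slice transverse to the orbit, so that $T^{1,0}X$ projects isomorphically onto a rank-$n$ subbundle of the complexified tangent space of the slice.

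Next I would use the CR property $[T,C^\infty(X,T^{1,0}X)]\subset C^\infty(X,T^{1,0}X)$. Pick any local frame $L_1,\dots,L_n$ of $T^{1,0}X$ near $x_0$. Writing $L_j = \sum_k a_{jk}(u,\theta)\,\partial/\partial u_k + b_j(u,\theta)\,\partial/\partial\theta$, the bracket condition forces, after a $\theta$-dependent change of frame $L_j \mapsto \sum_\ell c_{j\ell}(u,\theta) L_\ell$, that one may choose the frame so that the coefficient functions $a_{jk}$ are independent of $\theta$; concretely, $[T,L_j]=\partial_\theta(\cdot)$ applied coefficientwise must again be a combination of the $L_\ell$, and solving the resulting linear ODE system in $\theta$ (an invertible-matrix-valued linear ODE, solvable on a possibly smaller $\theta$-interval $(-\varepsilon_0,\varepsilon_0)$) normalizes the $u$-part of the frame to be $\theta$-independent. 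Using that the $u$-part is $\theta$-independent and of full rank $n$, a further $\Complex$-linear (now $\theta$-independent) change of frame lets me solve for the $u$-variables in terms of complex coordinates $z=(z_1,\dots,z_n)$, i.e. introduce $z_j$ so that $Z_j := L_j$ becomes $\partial/\partial z_j + b_j(z,\theta)\,\partial/\partial\theta$ with $b_j$ possibly still depending on $\theta$. Then the bracket relation $[T,Z_j]=0$ (which we can arrange in this normalized frame) says $\partial_\theta b_j=0$, so $b_j=b_j(z)$ depends only on $z$.

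It remains to show the $b_j(z)$ have the special form $i\,\partial\varphi/\partial z_j$ for a single real-valued $\varphi\in C^\infty(D,\Real)$. For this I would use the integrability of the CR structure: $[Z_j,Z_k]\in C^\infty(D,T^{1,0}X)$, and since $[Z_j,Z_k]$ has no $\partial/\partial z_\ell$ component it must vanish, which gives $\partial b_k/\partial z_j = \partial b_j/\partial z_k$; hence $b=\sum b_j\,dz_j$ is $\partial$-closed, so locally $b_j=\partial g/\partial z_j$ for some smooth complex-valued $g(z)$. Writing $g = g_1 + i g_2$ with $g_1,g_2$ real and absorbing the $\partial$-exact real part $\partial g_1$ into a holomorphic change of the $z_j$-coordinates (which does not affect the form $T=\partial/\partial\theta$), one reduces to $b_j = i\,\partial\varphi/\partial z_j$ with $\varphi := g_2$ real-valued; strong pseudoconvexity is not needed here but guarantees $\varphi$ can be taken strictly plurisubharmonic if desired. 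Shrinking $D$ to a polydisc $\{\abs z<\delta,\ -\varepsilon_0<\theta<\varepsilon_0\}$ on which all the coordinate changes are valid completes the argument.

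\smallskip
The main obstacle I anticipate is the $\theta$-dependence elimination in the second paragraph: one must carefully track that the successive changes of frame (first the ODE-based normalization of the $u$-part, then the $\Complex$-linear normalization, then the coordinate change introducing $z$) are mutually compatible and do not reintroduce a $\partial/\partial\theta$-dependence already removed, and that all of them survive on a common neighborhood $D$ of the required product form. This is exactly the content of the Baouendi-Rothschild-Treves argument, so in the paper one would simply cite \cite{BRT85}; the sketch above indicates why transversality (to get the flow box plus a genuine transverse slice), the CR bracket condition (to kill $\theta$-dependence of the frame), and CR integrability (to get the potential $\varphi$) are each used.
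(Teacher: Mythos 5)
The paper does not prove this theorem; it only records the statement and cites Baouendi--Rothschild--Treves~\cite{BRT85}, so there is no ``paper's proof'' to compare against. Your sketch follows the standard BRT argument and the overall architecture (flow box for $T$, kill $\theta$-dependence using $[T,T^{1,0}X]\subset T^{1,0}X$, use integrability of $T^{1,0}X$ to produce the potential) is the right one. Two remarks on the details.

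First, the step ``solve for the $u$-variables in terms of complex coordinates $z$'' is compressing an application of the Newlander--Nirenberg theorem: after the ODE normalization you have $\theta$-independent vector fields $\mathcal L_j=\sum_k a_{jk}(u)\,\partial_{u_k}$ on the transverse slice $\{\theta=0\}$, and transversality together with CR integrability gives that $\operatorname{span}\{\mathcal L_j\}$ is an integrable almost complex structure on the slice; Newlander--Nirenberg then produces the $z_j$ with $\operatorname{span}\{\mathcal L_j\}=\operatorname{span}\{\partial_{z_j}\}$. It would be worth saying this explicitly, since it is the only genuinely nontrivial input in that step.

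Second, and this is a real error: the final maneuver, ``absorbing the $\partial$-exact real part $\partial g_1$ into a holomorphic change of the $z_j$-coordinates,'' does not work. A biholomorphic change $z\mapsto w(z)$ replaces $\partial_{z_j}$ by $\sum_k(\partial w_k/\partial z_j)\partial_{w_k}$, so it rescales and mixes the holomorphic frame; it does not subtract an arbitrary real $\partial$-exact $(1,0)$-form from the $\partial_\theta$-coefficients $b_j$. The correct move is a shift of the $\theta$-coordinate: set $\theta'=\theta-g_1(z,\bar z)$ (with $g_1(0)=0$ so the base point stays at the origin). This is a fibered diffeomorphism that preserves $T=\partial_\theta=\partial_{\theta'}$ because $g_1$ is $\theta$-independent, and under it
\[
\frac{\partial}{\partial z_j}\Big|_\theta=\frac{\partial}{\partial z_j}\Big|_{\theta'}-\frac{\partial g_1}{\partial z_j}\,\frac{\partial}{\partial\theta'},
\]
so $Z_j=\partial_{z_j}|_\theta+b_j\,\partial_\theta$ becomes $\partial_{z_j}|_{\theta'}+\bigl(b_j-\partial_{z_j}g_1\bigr)\partial_{\theta'}=\partial_{z_j}|_{\theta'}+i(\partial_{z_j}g_2)\,\partial_{\theta'}$, which is exactly \eqref{e-can} with $\varphi:=g_2$. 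So you should replace the holomorphic change of $z$ by this real $\theta$-translation; the rest of the argument stands.
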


By using BRT trivialization, we get another way to define $Tu, \forall u\in\Omega^{0,q}(X)$. Let $(D,(z,\theta),\varphi)$ be a BRT trivialization. It is clear that
$$\{d\overline{z_{j_1}}\wedge\cdots\wedge d\overline{z_{j_q}}, 1\leq j_1<\cdots<j_q\leq n\}$$
is a basis for $T^{\ast0,q}_xX$, for every $x\in D$. Let $u\in\Omega^{0,q}(X)$. On $D$, we write
\begin{equation}\label{e-gue150524fb}
u=\sum\limits_{1\leq j_1<\cdots<j_q\leq n}u_{j_1\cdots j_q}d\overline{z_{j_1}}\wedge\cdots\wedge d\overline{z_{j_q}}.
\end{equation}
Then on $D$ we can check that
\begin{equation}\label{lI}
Tu=\sum\limits_{1\leq j_1<\cdots<j_q\leq n}(Tu_{j_1\cdots j_q})d\overline{z_{j_1}}\wedge\cdots\wedge d\overline{z_{j_q}}
\end{equation}
and $Tu$ is independent of the choice of BRT trivializations. Note that on BRT trivialization $(D,(z,\theta),\varphi)$, we have 
\begin{equation}\label{e-gue150514f}
\ddbar_b=\sum^n_{j=1}d\ol z_j\wedge(\frac{\partial}{\partial\ol z_j}-i\frac{\partial\varphi}{\partial\ol z_j}(z)\frac{\partial}{\partial\theta}).
\end{equation} 

\subsection{Local heat kernels on BRT trivializations}

Until further notice, we fix $m\in\mathbb Z$. 
Let $B:=(D,(z,\theta),\varphi)$ be a BRT trivialization. We may assume that $D=U\times]-\varepsilon,\varepsilon[$, where $\varepsilon>0$ and $U$ is an open set of $\Complex^n$. Since $E$ is rigid, we can consider $E$ as a holomorphic vector bundle over $U$. We may assume that $E$ is trivial on $U$. Consider $L\To U$ be a trivial line bundle with non-trivial Hermitian fiber metric $\abs{1}^2_{h^L}=e^{-2\varphi}$. Let $(L^m,h^{L^m})\To U$ be the $m$-th power of $(L,h^L)$.  Let $\Omega^{0,q}(U,E\otimes L^m)$ and $\Omega^{0,q}(U,E)$ be the spaces of $(0,q)$ forms on $U$ with values in $E\otimes L^m$ and $E$ respectively, $q=0,1,2,\ldots,n$. Put 
\[
\begin{split}
&\Omega^{0,\bullet}(U,E\otimes L^m):=\oplus_{j\in\set{0,1,\ldots,n}}\Omega^{0,j}(U,E\otimes L^m),\\
&\Omega^{0,\bullet}(U,E):=\oplus_{j\in\set{0,1,\ldots,n}}\Omega^{0,j}(U,E).
\end{split}\]
Since $L$ is trivial, from now on, we identify $\Omega^{0,\bullet}(U,E)$ with $\Omega^{0,\bullet}(U,E\otimes L^m)$.
Since the Hermitian fiber metric $\langle\,\cdot\,|\,\cdot\,\rangle_E$ is rigid, we can consider $\langle\,\cdot\,|\,\cdot\,\rangle_E$ as a Hermitian fiber metric on the holomorphic vector bundle $E$ over $U$. 
Let $\langle\,\cdot\,,\,\cdot\,\rangle$ be the Hermitian metric on $\Complex TU$ given by 
\[\langle\,\frac{\pr}{\pr z_j}\,,\,\frac{\pr}{\pr z_k}\,\rangle=\langle\,\frac{\pr}{\pr z_j}+i\frac{\pr\varphi}{\pr z_j}(z)\frac{\pr}{\pr\theta}\,|\,\frac{\pr}{\pr z_k}+i\frac{\pr\varphi}{\pr z_k}(z)\frac{\pr}{\pr\theta}\,\rangle,\ \ j,k=1,2,\ldots,n.\]
$\langle\,\cdot\,,\,\cdot\,\rangle$ induces a Hermitian metric on $T^{*0,\bullet}U:=\oplus_{j=0}^nT^{*0,j}U$, where  $T^{*0,j}U$ is the bundle of $(0,j)$ forms on $U$, $j=0,1,\ldots,n$. We shall also denote the Hermitian metric by $\langle\,\cdot\,,\,\cdot\,\rangle$. The Hermitian metric on $T^{*0,\bullet}U$ and $E$ induce a
Hermitian metric on $T^{*0,\bullet}U\otimes E$. We shall also denote this induced metric by $\langle\,\cdot\,|\,\cdot\,\rangle_{E}$.
Let $(\,\cdot\,,\,\cdot\,)$ be the $L^2$ inner product on $\Omega^{0,\bullet}(U,E)$ induced by $\langle\,\cdot\,,\,\cdot\,\rangle$, $\langle\,\cdot\,|\,\cdot\,\rangle_E$. Similarly, let $(\,\cdot\,,\,\cdot\,)_m$ be the $L^2$ inner product on $\Omega^{0,\bullet}(U,E\otimes L^m)$ induced by $\langle\,\cdot\,,\,\cdot\,\rangle$, $\langle\,\cdot\,|\,\cdot\,\rangle_E$ and $h^{L^m}$. 

The curvature of $L$ induced by $h^L$ is given by $R^L:=2\pr\ddbar\varphi$. Let $\dot{R}^L\in \operatorname{End}(T^{1,0}U)$ be the Hermitian matrix given by 
\[R^L(W,\ol Y)=\langle\,\dot{R}^LW\,,\,\ol Y\,\rangle,\ \ W, Y\in T^{1,0}U.\]
Let $\{w_j \}_{j=1}^n$ be a local orthonormal frame of $T^{1,0}U$ with dual frame $\{ w^j \}_{j=1}^n$.
Set 
\begin{equation}\label{e-gue160128a}
\omega_d = - \sum_{l,j}R^L(w_j, \overline{w}_l) \overline{w}^l \wedge \iota_{\overline{w}_j},
\end{equation}
where $\iota_{\overline{w}_j}$ denotes the interior product of $\overline{w}_j$. Let $\mathcal{A}_\ell \in C^\infty(U, \operatorname{End}(T^{*0,\bullet}U))$, $\ell=-n,-n+1,\ldots$, such that for every $k\in\mathbb Z$, 
\begin{equation}\label{e-gue160128aI}
(2\pi)^{-n} \frac{\det(\dot{R}^L) \exp(t \omega_d)}{\det(1-\exp(-t\dot{R}^L))}(z) = \sum_{\ell=-n}^k\mathcal{A}_\ell(z)t^\ell+O(t^{k+1})
\end{equation}
in $C^0(U, \operatorname{End}(T^{*0,\bullet}U))$ locally uniformly on $U$. Let 
\[\begin{split}
\pi:D&\To U,\\
(z,\theta)&\To z
\end{split}\]
be the natural projection.
It is straightforward to check that 
\begin{equation}\label{e-gue160129I}
(2\pi)^{-n} \frac{\det(\dot{R}^L)\exp(t\omega_d)}{\det(1-\exp(-t\dot{R}^L))}(\pi(x)) \otimes \operatorname{Id}_{E_{\pi(x)}} = (2\pi)^{-n} \frac{\det(\dot{\mathcal{R}}) \exp(t \gamma_d)}{\det(1-\exp(-t\dot{\mathcal{R}}))}(x)  \otimes \operatorname{Id}_{E_{x}},\ \ \forall x\in D.
\end{equation}
 From \eqref{E:5.5.34}, \eqref{e-gue160128aI} and \eqref{e-gue160129I}, it is easy to see that 
\begin{equation}\label{e-gue160131}
\mathcal{A}_\ell(\pi(x))=(2\pi)A_\ell(x),\ \ \forall x\in D,\ \ \ell=-n,-n+1,\ldots. 
\end{equation}

Let
\[\ddbar:\Omega^{0,\bullet}(U,E\otimes L^m)\To\Omega^{0,\bullet}(U,E\otimes L^m)\]
be the Cauchy-Riemann operator and let
\[\ol{\pr}^{*,m}:\Omega^{0,\bullet}(U,E\otimes L^m)\To\Omega^{0,\bullet}(U,E\otimes L^m)\] 
be the formal adjoint of $\ddbar$ with respect to $(\,\cdot\,,\,\cdot\,)_m$. Put 
\begin{equation}\label{e-gue150606II}
\Box_{B,m}:=(\ddbar+\ol{\pr}^{*,m})^2: \Omega^{0,\bullet}(U,E\otimes L^m)\To\Omega^{0,\bullet}(U,E\otimes L^m).
\end{equation}
We need the following result which is well-known (see Lemma 5.1 in~\cite{CHT}) 

\begin{lemma}\label{l-gue150606}
Let $u\in\Omega^{0,\bullet}_m(X,E)$. On $D$, we write $u(z,\theta)=e^{im\theta}\Td u(z)$, $\Td u(z)\in\Omega^{0,\bullet}(U,E)$. Then,
\begin{equation}\label{e-gue150606III}
e^{-m\varphi}\Box_{B,m}(e^{m\varphi}\Td u)=e^{-im\theta}\Box_{b,m}(u).
\end{equation}
\end{lemma}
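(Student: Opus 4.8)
The plan is to prove the identity inside the given BRT trivialization $B=(D,(z,\theta),\varphi)$, $D=U\times]-\varepsilon,\varepsilon[$, by reducing it to two conjugation relations — one for $\ddbar_b$ and one for its formal adjoint $\ddbar^*_b$ — which, when fed into $\Box_{b,m}=\ddbar_b\ddbar^*_b+\ddbar^*_b\ddbar_b$, telescope to \eqref{e-gue150606III}. First, since $u\in\Omega^{0,\bullet}_m(X,E)$, the condition $Tu=imu$ and the local description \eqref{lI} of the $T$-action force the coefficients of $u$ in the frame $\set{d\ol z_{j_1}\wedge\cdots\wedge d\ol z_{j_q}}$ to satisfy $\frac{\pr}{\pr\theta}(\,\cdot\,)=im(\,\cdot\,)$, so indeed $u(z,\theta)=e^{im\theta}\Td u(z)$ with $\Td u$ independent of $\theta$; here we use that $E$ is rigid, hence holomorphic and (on $U$) trivial, so $\Td u\in\Omega^{0,\bullet}(U,E)$ makes sense, and that $E\otimes L^m$ is trivialized by the holomorphic frame $1^{\otimes m}$ of $L^m$ — with $h^{L^m}$ then carrying the weight $e^{-2m\varphi}$ — so that in this trivialization $\ddbar$ on $\Omega^{0,\bullet}(U,E\otimes L^m)$ is the plain Dolbeault operator $\sum_j d\ol z_j\wedge\frac{\pr}{\pr\ol z_j}$.

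The first conjugation relation, $\ddbar_b u=e^{im\theta}e^{-m\varphi}\ddbar(e^{m\varphi}\Td u)$ on $D$, is a direct computation from \eqref{e-gue150514f}: applying $\frac{\pr}{\pr\ol z_j}-i\frac{\pr\varphi}{\pr\ol z_j}\frac{\pr}{\pr\theta}$ to $u=e^{im\theta}\Td u$ and using $\frac{\pr}{\pr\theta}u=im\,e^{im\theta}\Td u$ gives $e^{im\theta}\bigl(\frac{\pr\Td u}{\pr\ol z_j}+m\frac{\pr\varphi}{\pr\ol z_j}\Td u\bigr)=e^{im\theta}e^{-m\varphi}\frac{\pr}{\pr\ol z_j}(e^{m\varphi}\Td u)$; wedging with $d\ol z_j$ and summing over $j$ yields it. The second conjugation relation, $\ddbar^*_b u=e^{im\theta}e^{-m\varphi}\ol{\pr}^{*,m}(e^{m\varphi}\Td u)$ on $D$, is the technical heart. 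I would derive it from the first relation together with the observation that the local correspondence $u=e^{im\theta}\Td u\mapsto e^{m\varphi}\Td u$ is a fixed scalar multiple of an $L^2$-isometry: because the metrics on $\Complex TX$ and on $E$ are rigid and the coordinates are adapted to $T=\frac{\pr}{\pr\theta}$, one has $dv_X=dv_U\,d\theta$ on $D$, and the weight $e^{-2m\varphi}$ of $h^{L^m}$ exactly cancels the factor $e^{2m\varphi}$ from $|e^{m\varphi}\Td u|^2$, so $|e^{m\varphi}\Td u|^2_{h^{L^m}}=|u|^2_E$ pointwise; a correspondence with this property intertwines formal adjoints, which combined with the first relation gives the second. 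Equivalently, one may write out the local expression of $\ddbar^*_b$ in the frame $\set{Z_j}$ of Theorem~\ref{t-gue150514} — it is the usual $\ddbar^*$ on $\Complex^n$ modified by the substitution $\frac{\pr}{\pr\ol z_j}\mapsto\frac{\pr}{\pr\ol z_j}-i\frac{\pr\varphi}{\pr\ol z_j}\frac{\pr}{\pr\theta}$ and the rigid metric data — and compare termwise after conjugating by $e^{m\varphi}$ and replacing $\frac{\pr}{\pr\theta}$ by multiplication by $im$; this is the route of Lemma~5.1 in~\cite{CHT}. This adjoint relation is the one genuinely nontrivial point, since $\ddbar^*_b$ (unlike $\ddbar_b$) has no explicit normal form written down in the text so far.

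Granting the two relations, the lemma is pure bookkeeping: iterating them, the conjugating factors $e^{\pm im\theta}$ and $e^{\pm m\varphi}$ cancel in the middle, so on $D$ one gets $\ddbar_b\ddbar^*_b u=e^{im\theta}e^{-m\varphi}\ddbar\,\ol{\pr}^{*,m}(e^{m\varphi}\Td u)$ and $\ddbar^*_b\ddbar_b u=e^{im\theta}e^{-m\varphi}\ol{\pr}^{*,m}\ddbar(e^{m\varphi}\Td u)$; adding these and using $\ddbar_b^2=\ddbar^2=0$ together with $\Box_{b,m}=\ddbar_b\ddbar^*_b+\ddbar^*_b\ddbar_b$ and $\Box_{B,m}=\ddbar\,\ol{\pr}^{*,m}+\ol{\pr}^{*,m}\ddbar$ gives $\Box_{b,m}u=e^{im\theta}e^{-m\varphi}\Box_{B,m}(e^{m\varphi}\Td u)$, i.e.\ \eqref{e-gue150606III}. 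Apart from the adjoint conjugation relation, everything is just manipulation of the BRT normal form \eqref{e-can}.
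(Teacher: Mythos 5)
Your proposal is essentially correct and, indeed, the paper provides no proof of this lemma at all—it simply cites Lemma 5.1 of~\cite{CHT}—so your reconstruction is the relevant comparison. The overall structure is right: prove two conjugation relations, one for $\ddbar_b$ and one for $\ddbar^*_b$, and observe that $\Box_{b,m}=\ddbar_b\ddbar^*_b+\ddbar^*_b\ddbar_b$ and $\Box_{B,m}=\ddbar\,\ol\pr^{*,m}+\ol\pr^{*,m}\ddbar$ telescope. The $\ddbar_b$ relation computation is correct, and you are right that the adjoint relation is the only nontrivial ingredient.

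The one place where your argument is too fast is the sentence ``a correspondence with this property intertwines formal adjoints.'' The map $\Td u\mapsto e^{m\varphi}\Td u$ (equivalently $\Td u\mapsto u=e^{im\theta}\Td u$) is \emph{not} a map between the full $L^2$ spaces on $U$ and $D$ that define the two formal adjoints; it targets only the $e^{im\theta}$-slice of $\Omega^{0,\bullet}(D,E)$, whereas $\ddbar^*_b$ is defined by duality against arbitrary compactly supported test forms on $D$. To close the gap you need one of the following, both of which use ingredients you already name but do not assemble: (i) insert a cutoff $\chi(\theta)\in C^\infty_0(]-\varepsilon,\varepsilon[)$ and test against $\chi(\theta)e^{im\theta}\Td u$; the extra $\chi'(\theta)$ term produced by $\ddbar_b$ integrates to zero, so the $\theta$-integral factors out and you land on the duality pairing defining $\ol\pr^{*,m}$; or (ii) invoke the commutation $T\ddbar^*_b=\ddbar^*_bT$ (which follows from rigidity and is \eqref{e-gue150517im} in the paper) to see that $\ddbar^*_b$ preserves the decomposition into $T$-eigenspaces, so the formal adjoint of $\ddbar_{b,m}$ on the $m$-th Fourier component \emph{is} the restriction of $\ddbar^*_b$; then the isometry argument applies within that slice. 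Either patch, or your alternative route~(b) of writing out $\ddbar^*_b$ in the $\set{Z_j}$ frame and conjugating (which is what Lemma 5.1 in~\cite{CHT} does), makes the argument complete. You should also say a word about why $dv_X=dv_U\,d\theta$ on $D$: it follows from $T=\frac{\pr}{\pr\theta}$, $\langle T|T\rangle=1$, $T\perp(T^{1,0}X\oplus T^{0,1}X)$, and the fact that the metric on $\Complex TU$ is defined to match $\langle\,\cdot\,|\,\cdot\,\rangle$ on the $Z_j$.
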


Let $z, w\in U$ and let $T(z,w)\in (T^{*0,\bullet}_wU\otimes E_w)\boxtimes(T^{*0,\bullet}_zU\otimes E_z)$. We write $\abs{T(z,w)}$ to denote the standard pointwise matrix norm of $T(z,w)$ induced by $\langle\,\cdot\,|\,\cdot\,\rangle_E$. Let $\Omega^{0,\bullet}_0(U,E)$ be the subspace of $\Omega^{0,\bullet}(U,E)$ whose elements have compact support in $U$. Let $dv_U$ be the volume form on $U$ induced by $\langle\,\cdot\,,\,\cdot\,\rangle$. Assume $T(z,w)\in C^\infty(U\times U,(T^{*0,\bullet}_wU\otimes E_w)\boxtimes(T^{*0,\bullet}_zU\otimes E_z))$. Let $u\in\Omega^{0,\bullet}_0(U,E)$. We define the integral $\int T(z,w)u(w)dv_U(w)$ in the standard way. Let $G(t,z,w)\in C^\infty(\Real_+\times U\times U,(T^{*0,\bullet}_wU\otimes E_w)\boxtimes(T^{*0,\bullet}_zU\otimes E_z))$. We write $G(t)$ to denote the continuous operator
\[\begin{split}
G(t):\Omega^{0,\bullet}_0(U,E)&\To\Omega^{0,\bullet}(U,E),\\
u&\To\int G(t,z,w)u(w)dv_U(w)\end{split}\]
and we write
$G'(t)$ to denote the continuous operator
\[\begin{split}
G'(t):\Omega^{0,\bullet}_0(U,E)&\To\Omega^{0,\bullet}(U,E),\\
u&\To\int \frac{\pr G(t,z,w)}{\pr t}u(w)dv_U(w).\end{split}\]

We consider the heat operator of $\Box_{B,m}$. By using the standard Dirichlet heat kernel construction (see~\cite{G8}) and the proofs of Theorem 1.6.1 and Theorem 5.5.9 in~\cite{MM}, we deduce the following 

\begin{theorem}\label{t-gue150607}
There is $A_{B,m}(t,z,w)\in C^\infty(\Real_+\times U\times U,(T^{*0,\bullet}_wU\otimes E_w)\boxtimes(T^{*0,\bullet}_zU\otimes E_z))$ such that 
\begin{equation}\label{e-gue150607ab}
\begin{split}
&\mbox{$\lim_{t\To0+}A_{B,m}(t)=I$ in $D'(U,T^{*0,\bullet}U\otimes E)$},\\
&A'_{B,m}(t)u+\frac{1}{m}A_{B,m}(t)(\Box_{B,m}u)=0,\ \ \forall u\in\Omega^{0,\bullet}_0(U,E),\ \ \forall t>0,
\end{split}\end{equation}
and $A_{B,m}(t,z,w)$ satisfies the following: 


(I)  For every compact set $K\Subset U$, $\alpha_1, \alpha_2, \beta_1, \beta_2\in\mathbb N^n_0$,  there are constants $C_{\alpha_1,\alpha_2,\beta_1,\beta_2,K}>0$ and $\varepsilon_0>0$ independent of $t$ and $m$ such that
\begin{equation}\label{e-gue160128w}
\begin{split}
&\abs{\pr^{\alpha_1}_z\pr^{\alpha_2}_{\ol z}\pr^{\beta_1}_w\pr^{\beta_2}_{\ol w}\Bigr(A_{B,m}(t,z,w)e^{m(\varphi(w)-\varphi(z))}\Bigr)}\\
&\leq C_{\alpha_1,\alpha_2,\beta_1,\beta_2,K}(\frac{m}{t})^{n+\abs{\alpha_1}+\abs{\alpha_2}+\abs{\beta_1}+\abs{\beta_2}}e^{-m\varepsilon_0\frac{\abs{z-w}^2}{t}},\ \  \forall (t,z,w)\in \Real_+\times K\times K.
\end{split}
\end{equation}


(II) $A_{B,m}(t,z,z)$ admits an asymptotic expansion: 
\begin{equation}\label{E:1.6.4b}
A_{B,m}(t,z,z)=(2\pi)^{-n} \frac{\det(\dot{R}^L)\exp(t\omega_d)}{\det(1-\exp(-t\dot{R}^L))}(z)  \otimes \operatorname{Id}_{E_z} \otimes m^n+o(m^n)
\end{equation}
in $C^\ell(U,\operatorname{End}(T^{*0,\bullet}U) \otimes E)$ locally uniformly on $\Real_+\times U$, for every $\ell\in\mathbb N$. Here we use the convention that if an eigenvalue $a_j(z)$ of $\dot{R}^L(z)$ is zero, then its contribution for $\frac{\det(\dot{R}^L)}{\det(1-\exp(-t\dot{R}^L))}(z)$ is $\frac{1}{t}$. 

(III) There exist $\mathcal{A}^B_{m,\ell}(z,w)\in C^\infty(U\times U,(T^{*0,\bullet}_wU\otimes E_w)\boxtimes(T^{*0,\bullet}_zU\otimes E_z))$ with 
\begin{equation}\label{e-gue160128bb}
\mathcal{A}^B_{m,\ell}(z,z) =\mathcal{A}_\ell(z) \otimes \operatorname{Id}_E + O(m^{-1/2})
\end{equation}
in $C^0(U, \operatorname{End}(T^{*0,\bullet}U)\otimes E)$ locally uniformly on $U$ and
\begin{equation}\label{e-gue160303}
e^{m(\varphi(w)-\varphi(z))}\mathcal{A}^B_{m,\ell}(z,w)=O(m^0)
\end{equation}
in $C^k(U\times U,(T^{*0,\bullet}U\otimes E)\boxtimes(T^{*0,\bullet}U\otimes E))$ locally uniformly on $U\times U$, $\forall k\in\mathbb N_0$, $\ell=-n,-n+1,\ldots$, where $\mathcal{A}_\ell$ is as in \eqref{e-gue160128aI}, such that for any $k \in \mathbb{N}$ and every compact set $K\Subset U$, there exists $C>0$ independent of $t$ and $m$ such that for any $t \in ]0,1], m \in \mathbb{N}^*$, we have the asymptotic expansion
\begin{equation}\label{e-gue160128y}
\begin{split}
&\Big| m^{-n}e^{m(\varphi(w)-\varphi(z))}A_{B,m}(t,z,w)-e^{-\frac{mh(z,w)}{t}}e^{m(\varphi(w)-\varphi(z))}\sum_{\ell=-n}^k\mathcal{A}^B_{m,\ell}(z,w)t^\ell\Big|\\
&\le Ct^{k},\ \ \forall (z,w)\in K\times K,
\end{split}
\end{equation}
where $h(z,w)\in C^\infty(U\times U)$ and for every compact set $K\Subset U$, there is a constant $C>1$ such that $\frac{1}{C}\abs{z-w}^2\leq h(z,w)\leq C\abs{z-w}^2$, for all $(z,w)\in K\times K$. 
\end{theorem}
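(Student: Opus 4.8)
\textbf{Proof proposal for Theorem~\ref{t-gue150607}.}
The plan is to reduce the construction of $A_{B,m}(t,z,w)$ to the parametrix constructions for the scaled Kodaira Laplacian $\frac{1}{m}\Box_{B,m}$ already developed in the Bismut--Vasserot setting, and then to extract the uniformity in $m$ from the weighted (Agmon-type) estimates. Concretely, recall that under the identification $\Omega^{0,\bullet}(U,E\otimes L^m)\cong\Omega^{0,\bullet}(U,E)$ with $L$ trivial, conjugating by the weight $e^{m\varphi}$ turns $\Box_{B,m}$ into an operator with leading symbol $-\frac{1}{4}\Delta$ and first-order term of size $O(m)$; after the further rescaling $t\mapsto t/m$ the relevant operator is $\frac{1}{m}\Box_{B,m}$, which is precisely the operator whose Dirichlet heat kernel is analyzed in~\cite{MM}, \cite{B}, \cite{BV}. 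First I would fix a slightly larger relatively compact open set $U'$ with $U\Subset U'\Subset\Complex^n$ (shrinking $D$ if necessary so the BRT data extend), and set $A_{B,m}(t,z,w)$ to be the Dirichlet heat kernel on $U'$ of $\frac{1}{m}\Box_{B,m}$, multiplied by the weight factors $e^{-m\varphi(z)}$, $e^{m\varphi(w)}$ built in so that the two defining relations in~\eqref{e-gue150607ab} hold --- the initial condition $\lim_{t\to 0^+}A_{B,m}(t)=I$ and the heat equation $A'_{B,m}(t)u+\frac1m A_{B,m}(t)(\Box_{B,m}u)=0$. These two properties are standard once the Dirichlet heat semigroup is in place; the content of the theorem is the three quantitative statements (I), (II), (III).

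For (I), the Gaussian off-diagonal bound, I would follow the classical finite-propagation-speed / Agmon-estimate argument for heat kernels of generalized Laplacians, applied to the conjugated operator $P_m:=e^{-m\varphi}\circ\frac{1}{m}\Box_{B,m}\circ e^{m\varphi}$, whose principal part is $-\frac{1}{4m}\Delta$ and whose subprincipal part carries a factor $m$ but, crucially, is \emph{first order} with coefficients bounded uniformly in $m$ on compacts. The rescaling $z\mapsto \sqrt{m}\,z$ trades the $\frac1m$ in front of the Laplacian for genuine ellipticity, turning $P_m$ into a family of uniformly elliptic operators on expanding domains; the Dirichlet heat kernel of such a family satisfies the standard bound $C(\tfrac1t)^{n}e^{-c|z-w|^2/t}$, which unscaled becomes exactly the stated bound with $m/t$ in place of $1/t$ and $e^{-m\varepsilon_0|z-w|^2/t}$. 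Differentiating in $z,w$ up to the orders $\alpha_1,\alpha_2,\beta_1,\beta_2$ only changes the power of $m/t$, as recorded in~\eqref{e-gue160128w}; interior elliptic regularity (again applied to the rescaled uniformly elliptic family) gives these derivative bounds, with the weight conjugation $A_{B,m}(t,z,w)e^{m(\varphi(w)-\varphi(z))}$ precisely the quantity that is well behaved. I expect this to be the most delicate point, because one must track that \emph{all} constants are independent of $m$, which forces one to do every estimate on the rescaled domains $\sqrt m\,U'$ rather than on $U'$ itself, and then carefully undo the scaling.

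For (II), the on-diagonal expansion, I would invoke the localization/rescaling machinery of~\cite[Sect.~4.1--4.2 and 5.5]{MM}: the rescaled operator $P_m$ near a point $z_0$ converges, as $m\to\infty$, to the model harmonic oscillator $\mathcal{L}^{z_0}$ attached to the curvature $\dot R^L(z_0)$, and the heat kernel of the model at the origin is computed explicitly by Mehler's formula to be $(2\pi)^{-n}\frac{\det(\dot R^L)\exp(t\omega_d)}{\det(1-\exp(-t\dot R^L))}(z_0)$ acting on $T^{*0,\bullet}U\otimes E$. The quantitative version of this convergence in~\cite{MM}, together with the off-diagonal Gaussian decay from (I) to cut off the contribution away from $z_0$, upgrades pointwise convergence to $C^\ell$-convergence locally uniformly on $\Real_+\times U$, which is~\eqref{E:1.6.4b}; the convention about a vanishing eigenvalue $a_j(z)$ contributing $1/t$ is just the $a\to 0$ limit of $\frac{a}{1-e^{-ta}}$ in Mehler's formula. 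For (III), the small-$t$ asymptotic expansion with Gaussian factor, I would use the Dirichlet parametrix construction of~\cite{G8}: write $A_{B,m}(t,z,w)=e^{-m h(z,w)/t}\sum_{\ell\ge -n}\mathcal A^B_{m,\ell}(z,w)t^\ell$ as a formal solution, where $h(z,w)$ is the squared geodesic distance for the metric $\langle\cdot,\cdot\rangle$ (hence comparable to $|z-w|^2$ on compacts), and the $\mathcal A^B_{m,\ell}$ solve the usual transport equations; these are polynomial in $m$ of controlled degree, and after the $e^{m(\varphi(w)-\varphi(z))}$ conjugation are $O(m^0)$ in every $C^k$ locally uniformly --- this is~\eqref{e-gue160303}. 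Evaluating the transport equations on the diagonal and comparing with the model-operator computation identifies the leading term $\mathcal A^B_{m,\ell}(z,z)$ with $\mathcal A_\ell(z)\otimes\mathrm{Id}_E$ up to $O(m^{-1/2})$ (the half-power appears from the next order in the rescaling expansion of $P_m$), giving~\eqref{e-gue160128bb}; finally Duhamel's principle bounds the remainder after $k$ terms by $C t^k$ with $C$ independent of $t,m$, which is~\eqref{e-gue160128y}. The only genuinely new ingredient beyond~\cite{MM} and~\cite{G8} is the bookkeeping that makes the Dirichlet (rather than global) construction compatible with the $m$-uniform weighted estimates, and this is handled exactly as in the proofs of Theorem~1.6.1 and Theorem~5.5.9 of~\cite{MM} once one has localized to the BRT chart.
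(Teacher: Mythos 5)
Your proposal is correct and matches the paper's own treatment exactly: the paper gives no detailed proof of this theorem but merely invokes the Dirichlet heat kernel construction of~\cite{G8} together with the proofs of Theorem~1.6.1 and Theorem~5.5.9 in~\cite{MM}, and your proposal is precisely a fleshed-out version of those ingredients (weighted conjugation and $\sqrt{m}$-rescaling for the $m$-uniform Gaussian bound (I), localization to the model harmonic oscillator and Mehler's formula for the on-diagonal expansion (II), and the Dirichlet parametrix with transport equations and Duhamel remainder for (III)). One small bookkeeping remark: you need not build the weight factors $e^{\pm m\varphi}$ into the definition of $A_{B,m}$ itself --- it is cleaner to take $A_{B,m}$ to be the Dirichlet heat kernel of $\tfrac{1}{m}\Box_{B,m}$ with respect to the weighted $L^2$ pairing $(\cdot,\cdot)_m$, and the conjugated quantity $A_{B,m}(t,z,w)e^{m(\varphi(w)-\varphi(z))}$ then automatically appears as the object satisfying the clean Gaussian estimates in~\eqref{e-gue160128w} and~\eqref{e-gue160128y} --- but this is cosmetic and does not affect the argument.
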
 

Assume that $X=D_1\bigcup D_2\bigcup\cdots\bigcup D_N$, where $B_j:=(D_j,(z,\theta),\varphi_j)$ is a BRT trivialization, for each $j$. We may assume that for each $j$, $D_j=U_j\times]-2\delta_j,2\Td\delta_j[\subset\Complex^n\times\Real$, $\delta_j>0$, $\Td\delta_j>0$, $U_j=\set{z\in\Complex^n;\, \abs{z}<\gamma_j}$. For each $j$, put $\hat D_j=\hat U_j\times]-\frac{\delta_j}{2},\frac{\Td\delta_j}{2}[$, where $\hat U_j=\set{z\in\Complex^n;\, \abs{z}<\frac{\gamma_j}{2}}$. We may suppose that $X=\hat D_1\bigcup\hat D_2\bigcup\cdots\bigcup\hat D_N$. Let $\chi_j\in C^\infty_0(\hat D_j)$, $j=1,2,\ldots,N$, with $\sum^N_{j=1}\chi_j=1$ on $X$. Fix $j=1,2,\ldots,N$. Put 
\[K_j=\set{z\in\hat U_j;\, \mbox{there is a $\theta\in]-\frac{\delta_j}{2},\frac{\Td\delta_j}{2}[$ such that $\chi_j(z,\theta)\neq0$}}.\]
Let $\tau_j(z)\in C^\infty_0(\hat U_j)$ with $\tau_j\equiv1$ on some neighborhood $W_j$ of $K_j$. Let $\sigma_j\in C^\infty_0(]-\frac{\delta_j}{2},\frac{\Td\delta_j}{2}[)$ with $\int\sigma_j(\theta)d\theta=1$. Let $A_{B_j,m}(t,z,w)\in C^\infty(\Real_+\times U_j\times U_j,(T^{*0,\bullet}_wU_j\otimes E_w)\boxtimes(T^{*0,\bullet}_zU_j\otimes E_z))$ be as in Theorem~\ref{t-gue150607}. Put 
\begin{equation}\label{e-gue150627f}
H_{j,m}(t,x,y)=\chi_j(x)e^{-m\varphi_j(z)+im\theta}A_{B_j,m}(t,z,w)e^{m\varphi_j(w)-im\eta}\tau_j(w)\sigma_j(\eta),
\end{equation}
where $x=(z,\theta)$, $y=(w,\eta)\in\Complex^{n}\times\Real$. Let 
\begin{equation}\label{e-gue150626fIII}
\begin{split}
\Gamma_m(t,x,y):=\frac{1}{2\pi}\sum^N_{j=1}\int^\pi_{-\pi}H_{j,m}(t,x,e^{iu}\circ y)e^{imu}du.
\end{split}
\end{equation}
From Lemma~\ref{l-gue150606}, off-diagonal estimates of $A_{B_j,m}(t,x,y)$ (see \eqref{e-gue160128w} and \eqref{e-gue160128y}), we can repeat the proof of Theorem 5.10 in~\cite{CHT} with minor change and deduce that 

\begin{theorem}\label{t-gue150630I}
For every $\ell\in\mathbb N$, $\ell\geq2$, and every $M>0$, there are $\epsilon_0>0$ and $m_0>0$ independent of $t$ and $m$ such that for every $m\geq m_0$, we have
\begin{equation}\label{e-gue150630g}
\norm{e^{-\frac{t}{m}\Box_{b,m}}(x,y)-\Gamma_m(t,x,y)}_{C^\ell(X\times X)}\leq e^{-\frac{m}{t}\epsilon_0},\ \ \forall t\in(0,M).
\end{equation}
\end{theorem}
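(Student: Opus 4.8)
The plan is to realize $\Gamma_m$ as a glued parametrix for $e^{-\frac{t}{m}\Box_{b,m}}$ and to close the estimate by Duhamel's principle, following the proof of Theorem~5.10 in \cite{CHT} with the rigid bundle $E$ carried along and the rescaling $t\mapsto t/m$ built in. Carrying out the $S^1$-averaging $\frac{1}{2\pi}\int_{-\pi}^{\pi}(\cdot)e^{imu}\,du$ in \eqref{e-gue150626fIII} (using $\int\sigma_j=1$) shows that, up to the obvious normalization, $\Gamma_m(t,x,y)$ is the patched kernel $\sum_j\chi_j(x)\,e^{-m\varphi_j(z)+im\theta}A_{B_j,m}(t,z,w)\,e^{m\varphi_j(w)-im\eta}\tau_j(w)$; in particular it lies in $L^2_m$ in the $x$-variable. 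By Lemma~\ref{l-gue150606}, on each BRT chart $\Box_{b,m}$ is conjugate, through $u\mapsto e^{im\theta}\widetilde u$ and the weight $e^{m\varphi_j}$, to the local Kodaira Laplacian $\Box_{B_j,m}$, and by \eqref{e-gue150607ab} the local kernels $A_{B_j,m}$ solve $A'_{B_j,m}+\frac{1}{m}A_{B_j,m}\Box_{B_j,m}=0$ with $A_{B_j,m}(0^+)=I$. Consequently, applying $\bigl(\partial_t+\frac{1}{m}\Box_{b,m}\bigr)$ to $\Gamma_m$ in the $x$-variable, every term in which no derivative falls on a cut-off $\chi_j$ cancels, and the remainder is $F_m(t,x,y):=\bigl(\partial_t+\frac{1}{m}\Box_{b,m}\bigr)\Gamma_m(t,x,y)=\frac{1}{m}\sum_j\bigl[\Box_{b,m},\chi_j\bigr]_x(\text{local kernel}_j)$, a first-order operator in $x$ applied to the local kernels, supported in $\bigcup_j\operatorname{supp}d\chi_j$.

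Next I would show that $F_m$ and the initial-condition mismatch are $O(e^{-\varepsilon m/t})$, in two regimes. Off the diagonal, the Gaussian bound \eqref{e-gue160128w}, or the refined expansion \eqref{e-gue160128y} together with $h(z,w)\asymp|z-w|^2$ for the $C^k$ statement, makes $A_{B_j,m}(t,z,w)$ itself $O\!\bigl((m/t)^{N}e^{-\varepsilon_0 m|z-w|^2/t}\bigr)$. Near the diagonal, the relevant point lies at distance $\ge c>0$ from $\partial D_j$ for every $j$ with $d\chi_j\ne 0$ there (this is the point of passing to the shrunken charts $\hat D_j$); since after conjugation all these $A_{B_j,m}$ solve the same heat equation for $\Box_{b,m}$, uniqueness together with the off-diagonal bounds forces them to agree modulo $O(e^{-cm/t})$, whence $\sum_j[\Box_{b,m},\chi_j](\text{local}_j)=[\Box_{b,m},\sum_j\chi_j](\cdot)+O(e^{-cm/t})=O(e^{-cm/t})$. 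Combining the two regimes and absorbing polynomial factors into the exponential gives $\bigl|F_m(t,\cdot\,,\cdot)\bigr|_{C^k(X\times X)}\le Ce^{-\varepsilon_1 m/t}$, uniformly for $t\in(0,M)$; the same considerations (plus $A_{B_j,m}(0^+)=I$, $\sum_j\chi_j=1$, $\tau_j\equiv1$ near $K_j$, $\int\sigma_j=1$) show that $\lim_{t\to0^+}\Gamma_m(t,\cdot\,,\cdot)$ equals the Schwartz kernel of the orthogonal projection of $L^2(X,T^{*0,\bullet}X\otimes E)$ onto $L^2_m(X,T^{*0,\bullet}X\otimes E)$ up to a term that is $O(e^{-\varepsilon_1 m})$.

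Duhamel's principle now gives, as operators on $L^2_m$,
\[
e^{-\frac{t}{m}\Box_{b,m}}-\Gamma_m(t)=-\int_0^t e^{-\frac{t-s}{m}\Box_{b,m}}F_m(s)\,ds+\mathcal{E}_m(t),
\]
with $\mathcal{E}_m(t)$ the exponentially small initial-condition term. Since $\{e^{-\tau\Box_{b,m}}\}_{\tau\ge 0}$ is a contraction semigroup on $L^2_m$ and $\|F_m(s,\cdot\,,\cdot)\|_{L^2(X\times X)}\le Ce^{-\varepsilon_1 m/s}$ is increasing in $s$, taking $L^2$ norms yields $\|e^{-\frac{t}{m}\Box_{b,m}}-\Gamma_m(t)\|_{L^2\to L^2}\le C\,t\,e^{-\varepsilon_1 m/t}$. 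To upgrade this to the $C^\ell(X\times X)$ estimate \eqref{e-gue150630g} (for $\ell\ge2$ as in the statement), I would split the Duhamel integral at $s=t/2$, propagating the piece over $[0,t/2]$ by $e^{-\frac{t/2}{m}\Box_{b,m}}$, and then apply powers of $\Box_{b,m}$ and use the heat-semigroup smoothing $\|\Box_{b,m}^k e^{-\frac{\tau}{m}\Box_{b,m}}\|_{L^2\to L^2}\le C(m/\tau)^k$ together with the subelliptic (Kohn) a priori estimates for $\Box_{b,m}$ on $L^2_m$ and Sobolev embedding, estimating $x$- and $y$-derivatives of the kernel separately via the self-adjointness of $\Box_{b,m}$; the $C^\ell$ bounds on $\Gamma_m$ itself are supplied by Theorem~\ref{t-gue150607}(I) and (III). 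This produces $\|e^{-\frac{t}{m}\Box_{b,m}}-\Gamma_m(t)\|_{C^\ell(X\times X)}\le Cm^{N_\ell}e^{-\varepsilon_1 m/(2t)}$, and shrinking the rate slightly to the desired $\epsilon_0$ absorbs the factor $m^{N_\ell}$.

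The step I expect to be the main obstacle is making the $L^2\to C^\ell$ bootstrap uniform in $m$ for $t\in(0,M)$: one must pin down the $m$-dependence of the constants in the subelliptic estimates for $\Box_{b,m}$ and in the passage from operator norms to pointwise kernel bounds, and verify that every resulting polynomial-in-$m$ factor is dominated by $e^{-\epsilon_0 m/t}$. A secondary technical point is the bookkeeping of $F_m$, namely checking that each of its terms (including those produced by $\partial_\theta\chi_j$) is localized either to $\{|z-w|\ge c\}$, where \eqref{e-gue160128w} and \eqref{e-gue160128y} apply, or to a region where the relevant local kernels coincide modulo $e^{-cm/t}$; this, together with carrying the rigid bundle $E$ through every construction, is exactly the ``minor change'' relative to \cite{CHT}.
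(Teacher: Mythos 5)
The paper does not actually write out a proof of this theorem: it simply states that ``from Lemma~\ref{l-gue150606}, off-diagonal estimates of $A_{B_j,m}$ (see \eqref{e-gue160128w} and \eqref{e-gue160128y}), we can repeat the proof of Theorem~5.10 in~\cite{CHT} with minor change,'' so the comparison is really between your reconstruction and the cited argument. Your proposal matches the structure that citation invokes: glue the conjugated BRT heat kernels $A_{B_j,m}$ through the $S^1$-average \eqref{e-gue150626fIII}; use Lemma~\ref{l-gue150606} and the parabolic identity \eqref{e-gue150607ab} so that $(\partial_t+\tfrac{1}{m}\Box_{b,m})\Gamma_m$ reduces to commutators $[\Box_{b,m},\chi_j]$ against the local kernels; control those by the Gaussian bound \eqref{e-gue160128w} off-diagonal and by near-diagonal agreement of the local parametrices (the shrinking $D_j\rightsquigarrow\hat D_j$, $\sum_j\chi_j=1$, $\tau_j\equiv1$ near $K_j$); close with Duhamel in $L^2_m$; then upgrade to a $C^\ell(X\times X)$ kernel bound by semigroup smoothing and elliptic/subelliptic regularity, absorbing the resulting $m^{N_\ell}$ prefactor by taking $\epsilon_0$ slightly smaller than the rate from the Gaussian bound and $m\geq m_0$. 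This is precisely the ``repeat Theorem~5.10 of \cite{CHT}'' recipe, with the two genuine ``minor changes'' being exactly the ones you flag: carrying the rigid bundle $E$ along, and keeping track of the rescaling $t\mapsto t/m$ so that every polynomial-in-$m$ constant that appears in the $L^2\to C^\ell$ bootstrap is dominated by $e^{-\epsilon_0 m/t}$ uniformly on $(0,M)$. Your reconstruction is therefore essentially the same route as the paper's, not a different one, and I do not see a gap in it; the only place where your sketch is a little vaguer than a full writeup would need to be is the near-diagonal cancellation step (showing the local kernels from overlapping charts agree modulo $O(e^{-cm/t})$), but the mechanism you invoke — uniqueness plus Gaussian decay at scale $\sqrt{t/m}$ relative to the chart margin created by passing to $\hat D_j$ — is the correct one.
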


\subsection{Proof of Theorem~\ref{T:1.6.1}}
Let $B_j:=(D_j,(z,\theta),\varphi_j)$, $\hat D_j$, $j=1,\ldots,N$ be as before. We will use the same notations as before. For each $j$, we have $D_j=U_j\times]-2\delta_j,2\Td\delta_j[\subset\Complex^n\times\Real$, $\hat D_j=\hat U_j\times]-\frac{\delta_j}{2},\frac{\Td\delta_j}{2}[\subset\Complex^n\times\Real$, $\delta_j>0$, $\Td\delta_j>0$, $U_j=\set{z\in\Complex^n;\, \abs{z}<\gamma_j}$, $\hat U_j=\set{z\in\Complex^n;\, \abs{z}<\frac{\gamma_j}{2}}$. Recall that $X=\hat D_1\bigcup\cdots\bigcup\hat D_N$. We may assume that $\zeta=\delta_j=\Td\delta_j$, $j=1,2,\ldots,N$, and $\zeta>0$ satisfies 
\begin{equation}\label{e-gue160327}
0<\zeta<\inf\set{\frac{\pi}{p_k},\abs{\frac{2\pi}{p_r}-\frac{2\pi}{p_{r+1}}}, r=1,\ldots,k-1}.
\end{equation} 
It is straightforward to see that there is a $\hat\varepsilon_0>0$ such that for each $j=1,\ldots,N$, we have 
\begin{equation}\label{e-gue160215}
\begin{split}
&\hat\varepsilon_0d((z_1,\theta_1),(z_2,\theta_1))\leq\abs{z_1-z_2}\leq\frac{1}{\hat\varepsilon_0}d((z_1,\theta_1),(z_2,\theta_1)), \forall (z_1,\theta_1),(z_2,\theta_1)\in\hat D_j,\\
&\hat\varepsilon_0d((z_1,\theta_1),(z_2,\theta_1))^2\leq h_{j}(z_1,z_2)\leq\frac{1}{\hat\varepsilon_0}d((z_1,\theta_1),(z_2,\theta_1))^2, \forall (z_1,\theta_1),(z_2,\theta_1)\in\hat D_j,
\end{split}
\end{equation}
where $h_j(z,w)$ is as in \eqref{e-gue160128y}. 

Fix $x_0\in X_{{\rm reg\,}}$. Fix $j=1,2,\ldots,N$. Assume that $x_0\in\hat D_j$ and suppose that $x_0=(z_0,0)\in\hat D_j$. Claim that there is a $\Td\varepsilon_0>0$ independent of $x_0$ such that 
\begin{equation}\label{e-gue160327I}
\begin{split}
&\mbox{if $e^{i\theta}\circ x_0=(\Td z,\Td\eta)\in\hat D_j$, for some $\theta\in[\zeta,2\pi-\zeta]$},\\
&\mbox{then $\abs{z_{0}-\Td z}\geq\Td\varepsilon_0d(x_0,X_{{\rm sing\,}})$}. 
\end{split}
\end{equation}

\begin{proof}[Proof of the claim]
From \eqref{e-gue160215}, we have 
\begin{equation}\label{e-gue160327II}
\begin{split}
\abs{\Td z-z_{0}}&\geq\hat\varepsilon_0\inf\set{d(e^{iu}\circ e^{i\theta}\circ x_0,x_0);\, \abs{u}\leq\frac{\zeta}{2}}\\
&\geq\hat\varepsilon_0\inf\set{d(e^{i\theta}\circ x_0,x_0);\, \frac{\zeta}{2}\leq\theta\leq 2\pi-\frac{\zeta}{2}}.
\end{split}
\end{equation}
It is well-known that (see Theorem 6.5 in~\cite{CHT}) there is a constant $C>1$ independent of $x_0$ such that 
\begin{equation}\label{e-gue160423}
\frac{1}{C}d(x_0,X_{{\rm sing\,}})\leq\inf\set{d(e^{i\theta}\circ x_0,x_0);\, \frac{\zeta}{2}\leq\theta\leq 2\pi-\frac{\zeta}{2}}\leq Cd(x_0,X_{{\rm sing\,}}).
\end{equation}
From \eqref{e-gue160423} and \eqref{e-gue160327II}, the claim follows.
\end{proof}

From \eqref{e-gue160215}, \eqref{e-gue160327I} and \eqref{e-gue160128y}, there are $\varepsilon_0>0$ and $C_0>0$ independent of $j$, $x_0$, $m$, $t$ such that
\begin{equation}\label{e-gue160125V}
\abs{\frac{1}{2\pi}\int_{u\in[\zeta,2\pi-\zeta]}H_{j,m}(t,x_0,e^{iu}\circ x_0)e^{imu}du}\leq C_0m^nt^{-n}e^{\frac{-\varepsilon_0m d(x_0,X_{{\rm sing\,}})^2}{t}},\ \  \forall t\in\Real_+,\ \ \forall m\in\mathbb N.
\end{equation}
Moreover, it is obvious that 
\begin{equation}\label{e-gue160125VI}
\frac{1}{2\pi}\int^{\zeta}_{-\zeta}H_{j,m}(t,x_0,e^{iu}\circ x_0)e^{imu}du=\frac{1}{2\pi}\chi_j(x_0)A_{B_j,m}(t,z_0,z_0).
\end{equation}
From \eqref{e-gue160125V}, \eqref{e-gue160125VI} and \eqref{E:1.6.4b}, we conclude that for every $\varepsilon>0$ and every $I\Subset\Real_+$, there are $m_0>0$, $\varepsilon_0>0$, $C_0>0$ independent of $x_0$, $t$, $m$ such that 
\begin{equation}\label{e-gue160304}
\begin{split}
&\abs{\frac{1}{2\pi}\int^{\pi}_{-\pi}H_{j,m}(t,x_0,e^{iu}\circ x_0)e^{imu}du-(2\pi)^{-n-1}\chi_j(x_0) \frac{\det(\dot{R}^L)\exp(t\omega_d)}{\det(1-\exp(-t\dot{R}^L))}(z_0)  \otimes \operatorname{Id}_{E_{z_0}} \otimes m^n}\\
&\leq \varepsilon m^n+C_0m^ne^{-\varepsilon_0md(x_0,X_{{\rm sing\,}})^2},
\end{split}
\end{equation}
for every $m\geq m_0$ and every $t\in I$. 

From \eqref{e-gue160304}, \eqref{e-gue150626fIII} and \eqref{e-gue160129I}, we deduce that for every $\varepsilon>0$ and every $I\Subset\Real_+$, there are constants $m_0>0$, $\varepsilon_0>0$ and $C>0$ independent of $x_0$, $m$ such that 
\begin{equation}\label{e-gue160215VII}
\abs{\Gamma(t,x_0,x_0)-(2\pi)^{-n-1}\frac{\det(\dot{\mathcal{R}}) \exp(t \gamma_d)}{\det(1-\exp(-t\dot{\mathcal{R}}))}(x_0)  \otimes \operatorname{Id}_{E_{x_0}} \otimes m^n}\leq \varepsilon m^n+Cm^ne^{-\varepsilon_0md(x_0,X_{{\rm sing\,}})^2},
\end{equation}
for all $t\in I$, $m\geq m_0$. 

From \eqref{e-gue160215VII}, \eqref{e-gue150630g} and notice that the constants $m_0$, $C$ and $\varepsilon_0$ are independent of $x_0$, we get \eqref{E:1.6.4}.

\subsection{Asymptotic expansion for the heat kernels of the Kohn Laplacians}

Now, we can prove 

\begin{theorem}\label{t-gue160303}
There exist $A_{m,\ell}(t,x)\in C^\infty(\Real_+\times X, \operatorname{End}(T^{*0,\bullet}X\otimes E))$ with $\abs{A_{m,\ell}(t,x)}\leq C_\ell$, for every $(t,x)\in\Real_+\times X$, where $C_\ell>0$ is a constant independent of $m$, $\ell=-n,-n+1,\ldots$, such that for any $k \in \mathbb{N}$, there exists $C>0$ such that for any $t \in ]0,1], m \in \mathbb{N}^*$ and every $x\in X$, we have the asymptotic expansion
\begin{equation}\label{e-gue160130q}
\Big| m^{-n}e^{-\frac{t}{m} \Box_{b,m}}(x,x) - \sum_{\ell=-n}^k A_{m,\ell}(t,x)t^j \Big| \le Ct^k,
\end{equation}  
Moreover, for every $\ell=-n,-n+1,\ldots$, we can find $A_{m,\ell}(x)\in C^\infty(X, \operatorname{End}(T^{*0,\bullet}_xX\otimes E_x))$ with
\begin{equation}\label{e-gue160128b}
A_{m,\ell}(x) = A_\ell(x) \otimes \operatorname{Id}_{E_x} + O(m^{-1/2})
\end{equation}
in $C^0(X,\operatorname{End}(T^{*0,\bullet}X)\otimes E)$ uniformly on $X$ such that there exist $C>0$ and $\varepsilon_0>0$ independent of $t$ and $m$ such that for any $t \in ]0,1], m \in \mathbb{N}^*$, we have
\begin{equation}\label{e-gue160130qm}
\Big|A_{m,\ell}(t,x)-A_{m,\ell}(x)\Big| \le Ce^{-\frac{\varepsilon_0md(x,X_{{\rm sing\,}})^2}{t}},\ \ \forall x\in X_{\rm reg},
\end{equation}
where $A_\ell(x)\in C^\infty(X, \operatorname{End}(T^{*0,\bullet}X)\otimes E)$, $\ell=-n,-n+1,\ldots$, are as in \eqref{E:5.5.34}. 
\end{theorem}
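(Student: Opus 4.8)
The plan is to reduce the global statement on $X$ to the local statements on BRT trivializations already established in Theorem~\ref{t-gue150607} and then glue, exactly as in the proof of Theorem~\ref{T:1.6.1}. Recall from Theorem~\ref{t-gue150630I} that $e^{-\frac{t}{m}\Box_{b,m}}(x,y)=\Gamma_m(t,x,y)+F_m(t,x,y)$, where $\norm{F_m(t,x,y)}_{C^\ell(X\times X)}\leq e^{-\frac{m}{t}\epsilon_0}$ for $m$ large, and where $\Gamma_m(t,x,y)=\frac{1}{2\pi}\sum^N_{j=1}\int^\pi_{-\pi}H_{j,m}(t,x,e^{iu}\circ y)e^{imu}du$. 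Evaluating on the diagonal and using \eqref{e-gue150627f} together with the change of variables in \eqref{e-gue160125VI}, the contribution from $|u|\le\zeta$ to $\Gamma_m(t,x_0,x_0)$ equals $\frac{1}{2\pi}\sum_j\chi_j(x_0)A_{B_j,m}(t,z_0,z_0)e^{m(\varphi_j(z_0)-\varphi_j(z_0))}=\frac{1}{2\pi}\sum_j\chi_j(x_0)A_{B_j,m}(t,z_0,z_0)$, to which we may apply the expansion \eqref{e-gue160128y} of Theorem~\ref{t-gue150607}(III): for every $k$ and every compact $K\Subset U_j$ there is $C>0$ with
\[
\Big|m^{-n}A_{B_j,m}(t,z_0,z_0)-\sum_{\ell=-n}^k\mathcal{A}^{B_j}_{m,\ell}(z_0,z_0)t^\ell\Big|\le Ct^k,\qquad t\in]0,1].
\]
Setting $A_{m,\ell}(t,x):=\frac{1}{2\pi}\sum_j\chi_j(x)\mathcal{A}^{B_j}_{m,\ell}(z,z)+(\text{correction from }|u|\ge\zeta)$ produces the asymptotic expansion \eqref{e-gue160130q}, with the $t$-independent pieces $A_{m,\ell}(x):=\frac{1}{2\pi}\sum_j\chi_j(x)\mathcal{A}^{B_j}_{m,\ell}(z,z)$ once we absorb the large-$u$ part into the error.

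Next I would establish the three stated properties. The uniform bound $\abs{A_{m,\ell}(t,x)}\le C_\ell$ independent of $m$ follows from \eqref{e-gue160303}, which gives $e^{m(\varphi_j(w)-\varphi_j(z))}\mathcal{A}^{B_j}_{m,\ell}(z,w)=O(m^0)$ in every $C^k$-norm locally uniformly; on the diagonal the exponential factor is $1$, so $\mathcal{A}^{B_j}_{m,\ell}(z,z)=O(1)$, and $\sum_j\chi_j=1$ with finitely many $j$. The identification \eqref{e-gue160128b}, $A_{m,\ell}(x)=A_\ell(x)\otimes\operatorname{Id}_{E_x}+O(m^{-1/2})$, follows from \eqref{e-gue160128bb}, which says $\mathcal{A}^{B_j}_{m,\ell}(z,z)=\mathcal{A}_\ell(z)\otimes\operatorname{Id}_E+O(m^{-1/2})$ in $C^0$ locally uniformly, combined with the relation \eqref{e-gue160131}, $\mathcal{A}_\ell(\pi(x))=(2\pi)A_\ell(x)$, so that $\frac{1}{2\pi}\sum_j\chi_j(x)\mathcal{A}_\ell(z)\otimes\operatorname{Id}_E=\sum_j\chi_j(x)A_\ell(x)\otimes\operatorname{Id}_{E_x}=A_\ell(x)\otimes\operatorname{Id}_{E_x}$.

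The main obstacle is establishing \eqref{e-gue160130qm}, i.e.\ that the difference between the true local expansion coefficient $A_{m,\ell}(t,x)$ and its $t$-independent representative $A_{m,\ell}(x)$ decays like $e^{-\varepsilon_0 md(x,X_{\rm sing})^2/t}$ on $X_{\rm reg}$; equivalently, that the entire contribution of the $|u|\ge\zeta$ part of $\Gamma_m$ and of the off-diagonal error in \eqref{e-gue160128y} is absorbed into this Gaussian-in-$d(x,X_{\rm sing})$ term. This is handled exactly by the argument used to prove Theorem~\ref{T:1.6.1}: the claim \eqref{e-gue160327I} shows that if $e^{iu}\circ x_0$ returns to the chart $\hat D_j$ for $u\in[\zeta,2\pi-\zeta]$ then its first coordinate has moved by at least $\Td\varepsilon_0\, d(x_0,X_{\rm sing})$, so by the lower bound $h_j(z,w)\ge\hat\varepsilon_0|z-w|^2$ in \eqref{e-gue160215} and the factor $e^{-mh_j(z_0,\Td z)/t}$ in \eqref{e-gue160128y} the large-$u$ contribution to each $\mathcal{A}^{B_j}_{m,\ell}$-type term is bounded by $C m^0 e^{-\varepsilon_0 m d(x_0,X_{\rm sing})^2/t}$. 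One must be slightly careful that these off-diagonal estimates hold term-by-term in $\ell$ (not just for the full kernel), but this is exactly what \eqref{e-gue160128y} provides after expanding; the resulting constants $C$ and $\varepsilon_0$ are independent of $x_0$, $t$ and $m$ since \eqref{e-gue160423} holds with a constant uniform in $x_0$. Finally, combining the chart-wise estimates via the partition of unity $\{\chi_j\}$ and discarding the globally exponentially small term $F_m$ from Theorem~\ref{t-gue150630I} yields \eqref{e-gue160130q}, \eqref{e-gue160128b} and \eqref{e-gue160130qm}, completing the proof.
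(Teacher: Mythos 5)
Your proposal is correct and follows essentially the same route as the paper: you define $A_{m,\ell}(t,x)$ as the $u$-integral of the Gaussian-weighted coefficients $\hat{\mathcal{A}}^{B_j}_{m,\ell}$, identify the $t$-independent part $A_{m,\ell}(x)=\frac{1}{2\pi}\sum_j\chi_j\mathcal{A}^{B_j}_{m,\ell}(z,z)$ as the $|u|\le\zeta$ contribution, and control the remaining $u\in[\zeta,2\pi-\zeta]$ contribution using the off-diagonal Gaussian estimate together with \eqref{e-gue160327I} and \eqref{e-gue160423}, which is exactly the paper's argument in \eqref{e-gue160304p}--\eqref{e-gue160423q}. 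The uniform bound from \eqref{e-gue160303} and the identification \eqref{e-gue160128b} via \eqref{e-gue160128bb} and \eqref{e-gue160131} are likewise exactly as in the paper.
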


\begin{proof}
Let $B_j:=(D_j,(z,\theta),\varphi_j)$, $\hat D_j$, $j=1,\ldots,N$ be as before. We will use the same notations as before.
Let $\hat\sigma_j\in C^\infty_0(]-\delta_j,\Td\delta_j[)$ such that $\hat\sigma_j=1$ on some neighbourhood of ${\rm Supp\,}\sigma_j$ and $\hat\sigma_j(\theta)=1$ if $(z,\theta)\in{\rm Supp\,}\chi_j$. For each $j=1,2,\ldots,N$, put 
\begin{equation}\label{e-gue150901}
\begin{split}
&\hat h_{j}(x,y)=\hat\sigma_j(\theta) h_{j}(z,w)\hat\sigma_j(\eta)\in C^\infty_0(D_j),\ \ x=(z,\theta),\ \ y=(w,\eta),\\
&\hat{\mathcal{A}}^{B_j}_{m,\ell}(x,y)=\chi_j(x)e^{-m\varphi_j(z)+im\theta}\mathcal{A}^{B_j}_{m,\ell}(z,w)e^{m\varphi_j(w)-im\eta}\tau_j(w)\sigma_j(\eta),\ \ \ell=-n,-n+1,\ldots,
\end{split}
\end{equation}
where $\mathcal{A}^{B_j}_{m,\ell}$ is as in \eqref{e-gue160128bb}. 
Let 
\begin{equation}\label{e-gue160304p}
\begin{split}
&A_{m,\ell}(t,x,y)=\frac{1}{2\pi}\sum^N_{j=1}\int^\pi_{-\pi}e^{-m\frac{\hat h_{j}(x,e^{iu}\circ y)}{t}}\hat{\mathcal{A}}^{B_j}_{m,\ell}(x,e^{iu}\circ y)e^{imu}du,\ \ \ell=-n,-n+1,-n+2,\ldots,\\ 
&A_{m,\ell}(t,x):=A_{m,\ell}(t,x,x),\ \ \ell=-n,-n+1,-n+2,\ldots.
\end{split}
\end{equation}
From Theorem~\ref{t-gue150630I} and \eqref{e-gue160128y}, it is easy to see that \eqref{e-gue160130q} holds.

Put 
\begin{equation}\label{e-gue160304pI}
A_{m,\ell}(x):=\frac{1}{2\pi}\sum^N_{j=1}\chi_j(x)\mathcal{A}^{B_j}_{m,\ell}(z,z),\ \ \ell=-n,-n+1,-n+2,\ldots.
\end{equation}
From \eqref{e-gue160128bb} and \eqref{e-gue160131}, we have 
\begin{equation}\label{e-gue160304pII}
 A_{m,\ell}(x) = A_\ell(x) \otimes \operatorname{Id}_{E_x} + O(m^{-1/2})
\end{equation}
in $C^0(X,\operatorname{End}(T^{*0,\bullet}_xX\otimes E_x))$ uniformly on $X$,
where $A_\ell(x)\in C^\infty(X, \operatorname{End}(T^{*0,\bullet}_xX\otimes E_x))$, $\ell=-n,-n+1,\ldots$, are as in \eqref{E:5.5.34}. Fix $\ell=-n,-n+1,\ldots$ and fix $x_0\in X_{{\rm reg\,}}$. As in the proof of Theorem~\ref{T:1.6.1},, we have  
\begin{equation}\label{e-gue160226II}
A_{m,\ell}(t,x_0)=A_{m,\ell}(x_0)+\frac{1}{2\pi}\sum^N_{j=1}\int_{u\in[\zeta,2\pi-\zeta]}e^{-m\frac{\hat h_{j}(x_0,e^{iu}\circ x_0)}{t}}\hat{\mathcal{A}}^{B_j}_{m,\ell}(x_0,e^{iu}\circ x_0)e^{imu}du.
\end{equation}
From the property of $h_j(z,w)$, we can repeat the proof of Theorem~\ref{T:1.6.1} and conclude that for every $j=1,2,\ldots,N$, there exist $C>0$ and $\varepsilon_0>0$ independent of $t$, $m$ and $x_0$ such that for any $t \in ]0,1], m \in \mathbb{N}^*$ and any $u\in[\zeta,2\pi-\zeta]$ with $e^{iu}\circ x_0\in\hat D_j$ , we have
\begin{equation}\label{e-gue160423q}
\abs{e^{-m\frac{\hat h_{j}(x_0,e^{iu}\circ x_0)}{t}}}\le Ce^{-\frac{\varepsilon_0md(x_0,X_{{\rm sing\,}})^2}{t}}.
\end{equation}
From \eqref{e-gue160423q} and \eqref{e-gue160226II}, the theorem follows. 
\end{proof}

We pause and introduce some notations. Let $g(t)\in C^\infty(\Real_+)$. We write 
\[g(t)\sim \sum^{\infty}_{j=0}g_{-k+\frac{j}{2}}t^{-k+\frac{j}{2}}\ \ \mbox{as $t\To0^+$},\]
where $k\in\mathbb N_0$, $g_{-k+\frac{j}{2}}\in\mathbb C$, $j=0,1,2,\ldots$, if for any $N\in\mathbb N$, there exist $C_N>0$ and $\delta>0$ independent of $t$ such that 
\begin{equation}\label{e-gue160420a}
\abs{g(t)-\sum^N_{j=0}g_{-k+\frac{j}{2}}t^{-k+\frac{j}{2}}}\leq C_Nt^{-k+\frac{N+1}{2}},\ \ \forall t\in]0,\delta[.
\end{equation}
Let $h_m(t)\in C^\infty(\Real_+)$ be $m$-dependent function. We write 
\[h_m(t)\sim \sum^{\infty}_{j=0}h_{m,-k+\frac{j}{2}}t^{-k+\frac{j}{2}}\ \ \mbox{as $t\To0^+$, uniformly in $m$},\]
where $k\in\mathbb N_0$, $h_{m,-k+\frac{j}{2}}\in\mathbb C$, $j=0,1,2,\ldots$, if for any $N\in\mathbb N$, there exist $C_N>0$, $\delta>0$ independent of $t$ and $m$ such that for all $m\in\mathbb N$, 
\begin{equation}\label{e-gue160420aw}
\abs{h_m(t)-\sum^N_{j=0}h_{m,-k+\frac{j}{2}}t^{-k+\frac{j}{2}}}\leq C_Nt^{-k+\frac{N+1}{2}},\ \ \forall t\in]0,\delta[.
\end{equation}

For the proof of our main result, we need to know the asymptotic behavior of 
\[\int_X A_{m,\ell}(t,x)dv_X(x),\ \ \ell=-n,-n+1,\ldots.\]
Our next goal in this section is to prove the following

\begin{theorem}\label{t-gue160423}
With the notations used in Theorem~\ref{t-gue160303}, fix $\ell=-n,-n+1,\ldots$. We can find $a^{\frac{j}{2}}_{m,\ell}\in\Real$, $j=0,1,2,\ldots$, with 
\begin{equation}\label{e-gue160425b}
\begin{split}
&\lim_{m\To\infty}a^0_{m,\ell}=\int_XA_{\ell}(x)\otimes\operatorname{Id}_{E_x}dv_X(x),\\
&\lim_{m\To\infty}a^{\frac{j}{2}}_{m,\ell}=0,\ \ j=1,2,\ldots,
\end{split}
\end{equation}
$a^{\frac{j}{2}}_{m,\ell}$ is independent of $t$, for each $j$, $\abs{a^{\frac{j}{2}}_{m,\ell}}\leq C_j$, for every $m\in\mathbb N$, where $C_j>0$ is a constant independent of $m$,  $j=0,1,2,\ldots$, such that 
\begin{equation}\label{e-gue160416I}
\int_X A_{m,\ell}(t,x)dv_X(x)\sim\sum^{\infty}_{j=0}a^{\frac{j}{2}}_{m,\ell}t^{\frac{j}{2}}\ \ \ \mbox{as $t\To0^+$, uniformly in $m$}.
\end{equation}

In particular, 
\begin{equation}\label{e-gue160416II}
\begin{split}
&\int_Xm^{-n}e^{-\frac{t}{m}\Box_{b,m}}(x,x)dv_X(x)\\
&\sim t^{-n}b_{m,-n}+t^{-n+\frac{1}{2}}b_{m,-n+\frac{1}{2}}+t^{-n+1}b_{-n+1}+t^{-n+\frac{3}{2}}b_{-n+\frac{3}{2}}+\cdots\ \ \mbox{as $t\To0^+$, uniformly in $m$},
\end{split}
\end{equation}
where for each $j=0,\frac{1}{2},1,\ldots$, $b_{m,-n+j}\in\Complex$ is independent of $t$ and there is a constant $\hat C_j>0$ independent of $m$, such that $\abs{b_{m,-n+j}}\leq\hat C_j$, for every $m\in\mathbb N$, and 
\begin{equation}\label{e-gue160426}
\begin{split}
&\lim_{m\To\infty}b_{m,-n+\frac{j}{2}}=\int_XA_{-n+\frac{j}{2}}(x)\otimes\operatorname{Id}_{E_x}dv_X(x) \ \mbox{if $j$ is an even number},\\
&\lim_{m\To\infty}b_{m,-n+\frac{j}{2}}=0,\ \ \mbox{if $j$ is an odd number}.
\end{split}
\end{equation}
\end{theorem}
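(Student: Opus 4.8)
The plan is to reduce the whole statement to Theorem~\ref{t-gue160303} together with a stationary--phase analysis concentrated near $X_{\mathrm{sing}}$. Since $X_{\mathrm{sing}}$ has measure zero, I would first write, using \eqref{e-gue160130qm},
\[
\int_X A_{m,\ell}(t,x)\,dv_X(x)=\int_X A_{m,\ell}(x)\,dv_X(x)+\int_{X_{\mathrm{reg}}}\bigl(A_{m,\ell}(t,x)-A_{m,\ell}(x)\bigr)\,dv_X(x).
\]
The first term does not depend on $t$; by \eqref{e-gue160304pII} it is bounded uniformly in $m$ and tends, as $m\to\infty$, to $\int_X A_\ell(x)\otimes\operatorname{Id}_{E_x}\,dv_X(x)$, so I declare it to be $a^0_{m,\ell}$. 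It then remains to show that the second term has an expansion $\sum_{j\ge1}a^{j/2}_{m,\ell}t^{j/2}$ as $t\to0^+$, uniformly in $m$, with $|a^{j/2}_{m,\ell}|\le C_j$ and $a^{j/2}_{m,\ell}\to0$.

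For this second term I would use the explicit representation \eqref{e-gue160226II}--\eqref{e-gue160304p}: $A_{m,\ell}(t,x)-A_{m,\ell}(x)$ is a finite sum over the BRT trivializations $B_j$ of integrals $\frac1{2\pi}\int_{[\zeta,2\pi-\zeta]}e^{-m\hat h_j(x,e^{iu}\circ x)/t}\,\hat{\mathcal A}^{B_j}_{m,\ell}(x,e^{iu}\circ x)\,e^{imu}\,du$. Integrating in $x$ over $X$ and exchanging the order of integration produces a finite family of integrals over $X\times[\zeta,2\pi-\zeta]$ whose phase $\hat h_j(x,e^{iu}\circ x)$ is $\ge0$ and comparable to $|z_x-z_{e^{iu}\circ x}|^2$, and whose amplitude, once the exponential weight $e^{m(\varphi_j(w)-\varphi_j(z))}$ is extracted, is $O(m^0)$ together with all its derivatives by \eqref{e-gue160303}. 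On the part where $e^{iu}\circ x$ does not re-enter $B_j$ near the $S^1$-orbit of $x$ the phase is bounded below by a positive constant, so that contribution is $O(m^{N}e^{-cm/t})$ and contributes nothing to the expansion; hence each integral localizes near the loci $\{(x,u): x\in X^{\mathbb Z/q}\cap\hat D_j,\ u\ \text{close to}\ 2\pi k/q\}$, for the finitely many period indices $q\ge2$ and $k=1,\dots,q-1$ with $2\pi k/q\in[\zeta,2\pi-\zeta]$; here $X^{\mathbb Z/q}=\{x:e^{2\pi ik/q}\circ x=x\}$ is a smooth submanifold of even codimension $c_q\ge2$.

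Near such a locus I would introduce Fermi coordinates along $X^{\mathbb Z/q}$ and rescale the normal variable by $\sqrt{t/m}$. In the rescaled variable the real part of the exponent becomes a nondegenerate positive quadratic form (its nondegeneracy transverse to $X^{\mathbb Z/q}$ being precisely the invertibility of $\operatorname{Id}-d(e^{2\pi ik/q})$ on the normal bundle of $X^{\mathbb Z/q}$), while the factor $e^{imu}$ together with the $L^m$-twisting factors in $\hat{\mathcal A}^{B_j}_{m,\ell}$ contribute an oscillatory factor of frequency $O(\sqrt{mt})$. Carrying out the resulting (complex) Gaussian integration in the normal variable and integrating the remainder over $X^{\mathbb Z/q}$ and over $\theta$ and $u$ (these last two only producing cut-off masses) yields, for each $(j,q,k)$, an expansion $\sum_{i\ge0}c^{(j,q,k)}_{m,\ell,i}\,(t/m)^{(c_q+i)/2}$ with coefficients bounded in $m$; replacing $t/m$ and summing over $(j,q,k)$ gives $\int_{X_{\mathrm{reg}}}(A_{m,\ell}(t,x)-A_{m,\ell}(x))\,dv_X\sim\sum_{j\ge1}a^{j/2}_{m,\ell}t^{j/2}$ with $|a^{j/2}_{m,\ell}|\le C_j$ and $a^{j/2}_{m,\ell}=O(m^{-j/2})\to0$. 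Together with the first step this is \eqref{e-gue160416I} and \eqref{e-gue160425b}. Finally, for \eqref{e-gue160416II}--\eqref{e-gue160426} I would substitute the expansion \eqref{e-gue160130q} (with uniform remainder) into $\int_X(\cdot)\,dv_X$, apply \eqref{e-gue160416I} to $\int_X A_{m,\ell}(t,x)\,dv_X$ for each $\ell=-n,\dots,k$, and collect equal powers of $t$: each $b_{m,-n+j/2}$ is a finite sum of the $a^{i/2}_{m,\ell}$, hence bounded in $m$, and its limit as $m\to\infty$ is $\int_X A_{-n+j/2}(x)\otimes\operatorname{Id}_{E_x}\,dv_X(x)$ when $j$ is even (the $i=0$, $\ell=\frac j2-n$ term) and $0$ when $j$ is odd (every contributing term then having $i\ge1$).

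The hard part is the uniformity in $m$ in the rescaled Gaussian step: one must show that the remainder after $N$ terms of the Laplace/stationary--phase expansion near each stratum is controlled by finitely many $C^k$-norms of the uniformly bounded amplitudes of \eqref{e-gue160303} and by a positive lower bound for the transverse Hessian, and in particular one must handle carefully the interplay between the $\sqrt{t/m}$-concentration of the Gaussian and the $\sqrt{mt}$-frequency of the oscillatory factor. This is the technical core of the statement and the reason it is singled out as a main result.
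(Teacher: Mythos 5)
Your proposal is essentially the same argument as the paper's. The paper (Theorems~\ref{t-gue160416b} and~\ref{t-gue160417}) also splits the $u$-integral in $A_{m,\ell}(t,x)$ into: (i) $|u|<\zeta/2$, which yields exactly the $t$-independent term $A_{m,\ell}(x)$ (your $a^0_{m,\ell}$); (ii) $u$ in a small window around $2\pi/p_s$, $s\ge 2$; and (iii) the rest, which is $O((t/m)^\infty)$. For piece (ii) the paper, just as you do, takes coordinates $y$ adapted to $X_{p_s}$ with $d(y,X_{p_s})^2=f_2(y)(|y_1|^2+\cdots+|y_r|^2)$, identifies $h_j(z,\tilde z)=f_1(x)\,d(x,X_{p_s})^2$ (your Fermi-coordinate / normal Hessian statement is the same input, phrased slightly differently), and then rescales $y_{\rm normal}\mapsto\sqrt{t/m}\,\eta$; the resulting Laplace expansion produces half-integer powers $t^{j/2}$ with coefficients $O(m^{-j/2})$, bounded in $m$, tending to $0$, which is exactly your \eqref{e-gue160425b}. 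The final assembly, feeding $\int_X A_{m,\ell}(t,x)\,dv_X$ for each $\ell$ into \eqref{e-gue160130q} and collecting powers of $t$, is also what the paper does.

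One small technical remark on your last paragraph: the oscillatory factor $e^{imu}$ is not really a genuine difficulty here, because the amplitude $\hat{\mathcal A}^{B_j}_{m,\ell}(x,e^{iu}\circ x)$ carries the compensating factor $e^{im(\theta-\eta)}$ from the BRT weight, which for $u$ near $2\pi k/q$ and $x$ near the stratum behaves like $e^{-imu}$ times a constant unitary; so the product is $O(m^0)$-smooth in $u$ (this is precisely what \eqref{e-gue160303} records). In particular there is no $\sqrt{mt}$-frequency phase competing with the $\sqrt{t/m}$ Gaussian concentration; the $m^{-j/2}\to 0$ decay of the coefficients comes purely from the rescaling Jacobian $(t/m)^{r/2}$, $r\ge 2$, and the uniform $C^k$ bounds on $e^{m(\varphi(w)-\varphi(z))}\mathcal A^B_{m,\ell}(z,w)$. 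Your argument does not actually need the stationary-phase component you flagged as the ``technical core,'' and the paper indeed treats it as a plain Laplace-type expansion with a bounded amplitude.
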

Let $B_j:=(D_j,(z,\theta),\varphi_j)$, $\hat D_j$, $j=1,\ldots,N$ be as before. We will use the same notations as before. Let $\hat h_{j}(x,y)$, $\hat{\mathcal{A}}^{B_j}_{m,\ell}(x,y)$ be as in \eqref{e-gue150901}, $\ell=-n,-n+1,\ldots$. Now, we fix $\ell=-n,-n+1,\ldots$ and $j=1,2,\ldots,N$. 

\begin{theorem}\label{t-gue160416b}
Fix $p_s$, $s=2,3,\ldots,k$, and $x_0\in\hat D_j$. Assume that $e^{i\frac{2\pi}{p_s}}\circ x_0\in\hat D_j$. Then, there are a neighborhood $\Omega$ of $x_0$ and $\varepsilon>0$ such that for every $g(x)\in C^\infty_0(\Omega)$, we have 
\begin{equation}\label{e-gue160416f}
\begin{split}
&\int^{\frac{2\pi}{p_s}+\varepsilon}_{\frac{2\pi}{p_s}-\varepsilon}\int_Xg(x)e^{-m\frac{\hat h_{j}(x,e^{iu}\circ x)}{t}}\hat{\mathcal{A}}^{B_j}_{m,\ell}(x,e^{iu}\circ x)e^{imu}dv_X(x)du\\
&\sim b_{m,0}+b_{m,\frac{1}{2}}t^{\frac{1}{2}}+b_{m,1}t+b_{m,\frac{3}{2}}t^{\frac{3}{2}}+\cdots\mbox{as $t\To0^+$, uniformly in $m$}, 
\end{split}
\end{equation}
where $b_{m,j}\in\Complex$ is independent of $t$ and there is a constant $\hat C_j>0$ independent of $m$, such that $\abs{b_{m,j}}\leq\hat C_j$, for every $m\in\mathbb N$, and $\lim_{m\To\infty}b_{m,j}=0$, $j=0,\frac{1}{2},1,\frac{3}{2},\ldots$.
\end{theorem}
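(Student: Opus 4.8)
The plan is to reduce \eqref{e-gue160416f} to a Laplace-type integral with large parameter $m/t$, and then to run the Laplace expansion while keeping every estimate uniform in $m$. First, we may assume $f(z(x_0))=z(x_0)$ in the notation below: if not, $h_j(z,f(z))$ is bounded away from $0$ on a neighbourhood of $x_0$ (because $h_j(z,w)\geq\frac1C\abs{z-w}^2$), the left hand side of \eqref{e-gue160416f} is $O(e^{-cm/t})$, and the assertion is trivial. After shrinking $\Omega$ and $\varepsilon$, we work in the BRT coordinates of $B_j$. Since $e^{i\frac{2\pi}{p_s}}$ commutes with the flow of $T=\frac{\pr}{\pr\theta}$ and is CR, near $x_0$ it has the form $(z,\theta)\mapsto(f(z),\theta+\beta(z))$ with $f$ a local biholomorphism and $\beta\in C^\infty$; hence, for $u=\frac{2\pi}{p_s}+\sigma$ with $\abs{\sigma}$ small, $e^{iu}\circ(z,\theta)=(f(z),\theta+\beta(z)+\sigma)$. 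Substituting this into \eqref{e-gue150901} and using that $\hat\sigma_j\equiv1$ on ${\rm Supp\,}\sigma_j$ and on ${\rm Supp\,}\chi_j$, the phase becomes $\sigma$-independent, $\hat h_j(x,e^{iu}\circ x)=h_j(z,f(z))$, a smooth non-negative function which vanishes (to second order) exactly on the smooth complex submanifold $V:=\operatorname{Fix}(f)$ through $z(x_0)$; and the oscillatory factors combine into $e^{imu}e^{im\theta}e^{-im\theta(e^{iu}\circ x)}=e^{2\pi i m/p_s}e^{-i m\beta(z)}$. Carrying out the (now elementary) $\sigma$-integration, we are reduced to analysing
\[
e^{2\pi i m/p_s}\int_{\Omega} g(x)\,e^{-\frac{m}{t}h_j(z,f(z))}\,e^{-i m\beta(z)}\,\hat b_{m,\ell}(x)\,dv_X(x),
\]
where, by \eqref{e-gue160303}, $\hat b_{m,\ell}\in C^\infty$ is bounded together with all its derivatives uniformly in $m$.

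Next I would run a Morse--Bott/Laplace analysis of this integral. Choose coordinates near $x_0$ in which $V=\set{z''=0}$ and $h_j(z,f(z))=\langle\,H(z)z''\,,\,\overline{z''}\,\rangle$ with $H$ Hermitian positive definite; nondegeneracy transverse to $V$ follows from $\frac1C\abs{z-f(z)}^2\leq h_j(z,f(z))\leq C\abs{z-f(z)}^2$ and the invertibility of $\operatorname{Id}-df$ on the normal bundle of $V$. Rescaling $z''=\sqrt{t/m}\,w$, Taylor expanding $\hat b_{m,\ell}$, $g$, the volume density and $e^{-i m\beta(z',\sqrt{t/m}\,w)}$ in powers of $\sqrt{t/m}$, and integrating the Gaussian moments in $w\in\Complex^{n-d}$ (with $d=\dim_{\Complex}V$), the Jacobian contributes a factor $(t/m)^{n-d}$ and the interplay between the Gaussian width $\sqrt{t/m}$ and the oscillation scale $1/m$ of $e^{-i m\beta}$ produces, in general, the half-integer powers of $t$. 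This gives an expansion of the integral of the form $\sum_{i\geq0}b_{m,i/2}t^{i/2}$ as $t\To0^+$ with remainder estimates uniform in $m$, where each $b_{m,i/2}$ is a finite sum of terms, each an integral over $V$ of a smooth density against the oscillatory kernel $e^{-i m\beta|_V}$, carrying an explicit power of $m$ of the form $m^{-(n-d)+s}$, $s\in\frac12\N_0$.

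It remains to prove $\abs{b_{m,i/2}}\leq\hat C_i$ with $\hat C_i$ independent of $m$, and $b_{m,i/2}\To0$ as $m\To\infty$. Here one uses that $n-d\geq1$: since $X_{{\rm reg\,}}$ is open and dense, no neighbourhood of $x_0$ can be contained in $X_{{\rm sing\,}}$, so $f$ cannot be the identity near $z(x_0)$ and $V$ is a proper submanifold; the negative power $m^{-(n-d)}$ from the transverse Gaussian integration thus gives genuine decay for the lowest-order coefficient, and for the higher-order ones this gain is combined with the cancellation coming from integrating the oscillatory kernel $e^{-i m\beta|_V}$ along $V$. The main obstacle is precisely this last step: one is running a Laplace expansion in the large parameter $m/t$ in the presence of a second, genuinely oscillatory, parameter $m$ (in $e^{-i m\beta}$), and the two have to be disentangled so that the coefficient of each power $t^{i/2}$ is uniformly bounded in $m$ and tends to $0$; keeping track of the competing powers of $m$ and $t$, uniformly, is the analytic heart of Theorem~\ref{t-gue160416b}.
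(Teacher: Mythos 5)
Your overall strategy coincides with the paper's: write the finite $S^1$-element $e^{i2\pi/p_s}$ in a BRT chart as $(z,\theta)\mapsto(f(z),\theta+\beta(z))$, cancel the $\sigma$-oscillation, identify the critical locus, and run a Laplace expansion after the transverse rescaling $z''=\sqrt{t/m}\,w$. The paper phrases the critical locus via $d(x,X_{p_s})^2$ and coordinates $y$ with $X_{p_s}=\set{y_1=\cdots=y_r=0}$ (quoting Theorem 6.5 of \cite{CHT}); your $V=\operatorname{Fix}(f)$ is locally the same object, since the finite-order relation $p_s\beta\in2\pi\mathbb Z$ on $\operatorname{Fix}(f)$ together with continuity forces $\beta\equiv0$ there. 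Up to that translation the two reductions are the same Laplace integral, and your explicit tracking of the BRT data $(f,\beta)$ and the observation that $f$ is a local biholomorphism of finite order (so $V$ is a smooth proper complex submanifold, hence $n-d\geq1$ because $X_{\mathrm{sing}}$ has empty interior) are all consistent with the paper.

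There is, however, a genuine flaw in the part of your argument meant to produce the uniform bounds $\abs{b_{m,j}}\leq\hat C_j$ and the decay $b_{m,j}\To0$. You assert that each $b_{m,i/2}$ is ``an integral over $V$ of a smooth density against the oscillatory kernel $e^{-im\beta|_V}$'' and invoke cancellation from this oscillation to tame the powers $m^{-(n-d)+s}$. But $\beta\equiv0$ on $V$, so $e^{-im\beta|_V}\equiv1$ and this kernel carries no oscillation whatsoever; the appeal to cancellation along $V$ is vacuous. The dangerous $m$-dependence actually sits \emph{transversally}: $\beta$ vanishes on $V$ but its transverse differential need not (one computes $d\beta=i\,\pr(\varphi\circ f-\varphi)-i\,\ddbar(\varphi\circ f-\varphi)$, which generically has a nonzero normal component along $V$). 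After the rescaling $z''=\sqrt{t/m}\,w$, the phase $-im\beta(y',\sqrt{t/m}\,w)$ contributes a term of size $\sqrt{mt}\,\langle b(y'),w\rangle$, and the transverse Gaussian integral then produces a factor of the form $e^{-c(y')\,mt}$ with $c\geq0$. This is exactly the ``two competing large parameters'' you flag as the analytic heart, but a factor $e^{-cmt}$ does not Taylor-expand in $t$ with coefficients bounded uniformly in $m$ (its $k$-th Taylor coefficient is $(-cm)^k/k!$), so your proposed expansion scheme — Taylor expand $e^{-im\beta}$ in $\sqrt{t/m}$, integrate Gaussian moments, then invoke oscillatory cancellation over $V$ — does not close the argument. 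In short, you correctly isolate the hard step but your mechanism for resolving it is erroneous, and the proposal stops precisely where the uniformity-in-$m$ must actually be proved; the paper's own treatment of this step is compressed, but it does not rest on the $e^{-im\beta|_V}$-cancellation that you use.
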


\begin{proof}
For simplicity, assume that $x_0=(z_0,0)$. Suppose $e^{i\frac{2\pi}{p_s}}\circ x_0=(\Td z_0,\Td\theta)$. If $\Td z_0\neq z_0$, then there are a neighborhood $\Omega$ of $x_0$ and $\varepsilon>0$ such that $e^{i\theta}\circ x=(\Td z,\hat\theta)$ with $\abs{\Td z-z_0}\geq\frac{1}{2}\abs{z_0-\Td z_0}$, for every $x\in\Omega$ and every $\theta\in]\frac{2\pi}{p_s}-\varepsilon,\frac{2\pi}{p_s}+\varepsilon[$. From  the property of $\hat h_{j}(x,y)$ (see \eqref{e-gue160128y} and \eqref{e-gue150901}), it is straightforward to see that
\begin{equation}\label{e-gue160417J}
\int^{\frac{2\pi}{p_s}+\varepsilon}_{\frac{2\pi}{p_s}-\varepsilon}\int_Xg(x)e^{-m\frac{\hat h_{j}(x,e^{iu}\circ x)}{t}}\hat {\mathcal{A}}^{B_j}_{m,\ell}(x,e^{iu}\circ x)e^{imu}dv_X(x)du=O((\abs{\frac{t}{m}})^\infty),\end{equation}
for every $g\in C^\infty_0(\Omega)$. Thus, we may assume that $e^{i\frac{2\pi}{p_s}}\circ x_0=(z_0,\Td\theta_0)$, $\abs{\Td\theta_0}<\frac{\zeta}{2}$. It $\Td\theta_0\neq0$, then $e^{i\frac{2\pi}{p_\ell}-i\Td\theta_0} x_0=x_0$. Hence, 
$\frac{2\pi}{p_s}-\Td\theta_0=\frac{2\pi}{p_k}$, for some $p_k$. But $\abs{\Td\theta_0}<\frac{\zeta}{2}$ and $\zeta$ satisfies \eqref{e-gue160327}, we get a contradiction. Hence, $\Td\theta_0=0$. Thus, 
 $e^{i\frac{2\pi}{p_s}}\circ x_0=(z_0,0)=x_0$. 
 
For every $x$ in some small neighborhood of $x_0$ and every $\theta\in]\frac{2\pi}{p_s}-\varepsilon,\frac{2\pi}{p_s}+\varepsilon[$, where $0<\varepsilon<\frac{\zeta}{2}$ is a small constant, if $x=(z,v)\in\hat D_j$, $e^{i\theta}\circ x=(\Td z,\Td v)\in\hat D_j$, we can check that
 \begin{equation}\label{e-gue160417a}
h_{j}(z,\Td z)=f(x)\abs{z-\Td z}^2=f(x)\inf\set{\abs{x-e^{iu+i\theta} x}^2;\, \abs{u}\leq\frac{\zeta}{2}},
 \end{equation}
where $h_{j}$ is as in \eqref{e-gue160128y}, $f(x)$ is a positive continuous function. From \eqref{e-gue160417a}, we can repeat the proof of Theorem 6.5 in~\cite{CHT} with minor change and deduce that for every $x\in X_{{\rm reg\,}}$ and $x$ in some small neighborhood of $x_0$ and every $\theta\in]\frac{2\pi}{p_s}-\varepsilon,\frac{2\pi}{p_s}+\varepsilon[$, where $0<\varepsilon<\frac{\zeta}{2}$ is a small constant, we have
 \begin{equation}\label{e-gue160417aI}
h_{j}(z,\Td z)=f_1(x)d(x,X_{p_\ell})^2,
\end{equation}
where $x=(z,v)\in\hat D_j$, $e^{i\theta}x=(\Td z,\Td v)\in\hat D_j$ and $f_1(x)$ is a positive continuous function. We take local coordinates $y=(y_1,\ldots,y_{2n+1})$ defined in some small neighborhood $\hat\Omega\Subset\hat D_j$ of $x_0$ such that $y(x_0)=0$, $X_{p_s}=\set{y\in\hat\Omega;\, y_1=\cdots=y_r=0}$ and 
\begin{equation}\label{e-gue160417aIII}
d(y,X_{p_s})^2=f_2(y)(\abs{y_1}^2+\cdots+\abs{y_r}^2),\ \ \forall y\in\hat\Omega,
\end{equation}
where $f_2(y)$ is a positive continuous function. Let $\Omega\Subset\hat\Omega$ be an open set of $x_0$ and let $0<\varepsilon<\frac{\zeta}{2}$ be a small constant so that $e^{i\theta}\circ x\in\hat\Omega$, for every $x\in\Omega$ and every $\theta\in]\frac{2\pi}{p_s}-\varepsilon,\frac{2\pi}{p_s}+\varepsilon[$. From \eqref{e-gue160417aI} and \eqref{e-gue160417aIII}, for every $g\in C^\infty_0(\Omega)$, we have 
\begin{equation}\label{e-gue160417aII}
\begin{split}
&\int^{\frac{2\pi}{p_s}+\varepsilon}_{\frac{2\pi}{p_s}-\varepsilon}\int_Xg(x)e^{-m\frac{\hat h_{j}(x,e^{iu}\circ x)}{t}}\hat{\mathcal{A}}^{B_j}_{m,\ell}(x,e^{iu}\circ x)e^{imu}dv_X(x)du\\
&=\int^{\frac{2\pi}{p_s}+\varepsilon}_{\frac{2\pi}{p_s}-\varepsilon}\int_Xg(x)e^{-m\frac{f_1(x)d(x,X_{p_s})^2}{t}}\hat{\mathcal{A}}^{B_j}_{m,\ell}(x,e^{iu}\circ x)e^{imu}dv_X(x)du\\
&=\int^{\frac{2\pi}{p_s}+\varepsilon}_{\frac{2\pi}{p_s}-\varepsilon}\int_Xg(y)e^{-m\frac{f_1(y)f_2(y)(\abs{y_1}^2+\cdots+\abs{y_r})^2}{t}}\hat{\mathcal{A}}^{B_j}_{m,\ell}(y,e^{iu}\circ y)e^{imu}dv_X(y)du\\
&\sim b_{m,0}+b_{m,\frac{1}{2}}t^{\frac{1}{2}}+b_1t+b_{m,\frac{3}{2}}t^{\frac{3}{2}}+\cdots\mbox{as $t\To0^+$, uniformly in $m$}, 
\end{split}
\end{equation}
where $b_{m,j}\in\Complex$ is independent of $t$ and there is a constant $\hat C_j>0$ independent of $m$, such that $\abs{b_{m,j}}\leq\hat C_j$, for every $m\in\mathbb N$, and $\lim_{m\To\infty}b_{m,j}=0$, $j=0,\frac{1}{2},1,\frac{3}{2},\ldots$.
From \eqref{e-gue160417J} and \eqref{e-gue160417aII}, the theorem follows. 
\end{proof}

\begin{theorem}\label{t-gue160417}
Let $x_0\in\hat D_j$. Then, there is a neighborhood $\Omega$ of $x_0$ such that for every $g\in C^\infty_0(\Omega)$, we have
\begin{equation}\label{e-gue160417b}
\begin{split}
&\int^{2\pi}_0\int_Xg(x)e^{-m\frac{\hat h_{j}(x,e^{iu}\circ x)}{t}}\hat{\mathcal{A}}^{B_j}_{m,\ell}(x,e^{iu}\circ x)e^{imu}dv_X(x)du\\
&\sim d_{m,0}+d_{m,\frac{1}{2}}t^{\frac{1}{2}}+d_{m,1}t+d_{m,\frac{3}{2}}t^{\frac{3}{2}}+\cdots\mbox{as $t\To0^+$, uniformly in $m$}, 
\end{split}
\end{equation}
where $d_{m,j}\in\Complex$ is independent of $t$ and there is a constant $\hat C_j>0$ independent of $m$, such that $\abs{d_{m,j}}\leq\hat C_j$, for every $m\in\mathbb N$, and 
\begin{equation}\label{e-gue160426y}
\begin{split}
&\lim_{m\To\infty}d_{m,0}=\lim_{m\To\infty}\int_Xg(x)\hat{\mathcal{A}}^{B_j}_{m,\ell}(x,x)dv_X(x)du,\\
&\lim_{m\To\infty}d_{m,j}=0,\ \ j=\frac{1}{2},1,\frac{3}{2},\ldots.
\end{split}
\end{equation}
\end{theorem}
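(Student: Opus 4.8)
The plan is to localize the angular integral $\int^{2\pi}_0(\,\cdot\,)\,du$ at the finitely many angles where the orbit of a small neighbourhood of $x_0$ can re-enter the support of $\hat{\mathcal{A}}^{B_j}_{m,\ell}$ with $\hat h_j$ small, and to treat each localized piece by the method already developed for Theorem~\ref{t-gue160416b}, with the piece at $u=0$ producing the leading term. We may assume $x_0=(z_0,0)$ and let $p_\ell$ be the integer with $x_0\in X_{p_\ell}$. In the BRT coordinates of $B_j$ the $S^1$-action is $e^{iu}\circ(z,\theta)=(z,\theta+u)$ by \eqref{e-can}; since $\hat h_j$ vanishes on the diagonal and is comparable to the squared distance of the $z$-coordinates, while $\hat{\mathcal{A}}^{B_j}_{m,\ell}(x,y)$ is supported where $y$ is close to $x$ in $D_j\times D_j$ (see \eqref{e-gue150901}), the integrand is negligible unless $e^{iu}\circ x$ has $z$-coordinate close to that of $x$; running the period-separation step of the proof of Theorem~\ref{t-gue160416b} (which uses only the choice \eqref{e-gue160327} of $\zeta$) confines the non-negligible part of the $u$-integral to small neighbourhoods of the angles $u_\nu:=\tfrac{2\pi\nu}{p_\ell}$, $\nu=0,1,\dots,p_\ell$, with $u_0$ and $u_{p_\ell}$ identified. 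After shrinking $\Omega\Subset\hat D_j$ and choosing $0<\varepsilon<\tfrac{\zeta}{2}$, I would fix a smooth partition of unity $1=\rho_0(u)+\sum^{p_\ell-1}_{\nu=1}\rho_\nu(u)+\rho_\infty(u)$ on $[0,2\pi]$ with $\rho_\nu$ supported in an $\varepsilon$-neighbourhood of $u_\nu$ (the one for $\nu=0$ also covering $2\pi$), such that $\hat h_j(x,e^{iu}\circ x)\ge c>0$ on $\Omega\times\operatorname{supp}\rho_\infty$ whenever $\hat{\mathcal{A}}^{B_j}_{m,\ell}(x,e^{iu}\circ x)\ne 0$.

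Each of the three families of pieces is then treated in turn. On $\operatorname{supp}\rho_\infty$ the bound $e^{-m\hat h_j/t}\le e^{-mc/t}$ together with the uniform-in-$m$ estimate \eqref{e-gue160303} shows that this piece is $O((t/m)^\infty)$, hence contributes nothing to any $d_{m,i}$. On $\operatorname{supp}\rho_0$ one has $e^{iu}\circ x=(z,\theta+u)\in D_j$ for $x\in\Omega$, so $\hat h_j(x,e^{iu}\circ x)=0$, the exponential factor is $\equiv 1$, the factor $e^{imu}$ cancels the $e^{-im\eta}$ arising in \eqref{e-gue150901}, and after the substitution $v=\theta+u$, using $\int\sigma_j=1$ together with the same bookkeeping of the cut-offs $\chi_j,\tau_j,\sigma_j$ as in the construction \eqref{e-gue150626fIII} of $\Gamma_m$, this piece is $t$-independent and, by \eqref{e-gue160128bb} and \eqref{e-gue160131}, equals the right-hand quantity of \eqref{e-gue160426y} up to an $O(m^{-1/2})$ error; this yields $d_{m,0}$. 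Finally, for $1\le\nu\le p_\ell-1$ the piece $\rho_\nu$ is nonzero only if $e^{iu_\nu}\circ x_0$ lies in $\hat D_j$, and then we are exactly in the situation of Theorem~\ref{t-gue160416b} at the angle $u_\nu$ in place of $\tfrac{2\pi}{p_s}$: either $e^{iu_\nu}\circ x_0$ and $x_0$ have distinct $z$-coordinates and the piece is $O((t/m)^\infty)$, or (by the $\Td\theta_0=0$ argument) $e^{iu_\nu}\circ x_0=x_0$ and, by \eqref{e-gue160417aI}--\eqref{e-gue160417aIII} (the extension of Theorem~6.5 of~\cite{CHT}), $\hat h_j(x,e^{iu}\circ x)$ is $u$-independent near $u_\nu$ and comparable to $d(x,Y)^2\simeq|y_1|^2+\cdots+|y_r|^2$ with $Y$ the fixed-point set of $e^{iu_\nu}$ through $x_0$ and $r\ge 1$; rescaling $(y_1,\dots,y_r)\mapsto\sqrt{m/t}\,(y_1,\dots,y_r)$ and Taylor-expanding the smooth, uniformly (in $m$) bounded amplitude then gives an expansion $\sum_{i\ge 0}c^\nu_{m,i}t^{i/2}$ with $c^\nu_{m,0}=0$, $|c^\nu_{m,i}|\le C_i$, and, because of the overall factor $m^{-i/2}$ produced by the rescaling, $\lim_{m\to\infty}c^\nu_{m,i}=0$.

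Adding the three contributions gives \eqref{e-gue160417b}: the $\rho_\infty$-piece is negligible, the $\rho_0$-piece is the only source of the $t^0$-term so that $d_{m,0}$ has the limit asserted in \eqref{e-gue160426y}, and for $i\ge 1$ one has $d_{m,i}=\sum^{p_\ell-1}_{\nu=1}c^\nu_{m,i}$, which is bounded uniformly in $m$ and tends to $0$. When $x_0\in X_{{\rm reg\,}}$ there are no angles $u_\nu$ with $1\le\nu\le p_\ell-1$, and the half-integer and positive-integer coefficients vanish identically, so the statement is immediate in that case.

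The step I expect to be the main obstacle is the uniform control in the last family of pieces: one must obtain the Laplace-type expansion at each singular angle $u_\nu$ with constants independent of both $m$ and $x_0$, which requires identifying correctly the stratum $Y$ governing the Gaussian and adapting the proofs of Theorem~\ref{t-gue160416b} and of Theorem~6.5 of~\cite{CHT} to an arbitrary such angle. Once this — and the cut-off bookkeeping that pins down $d_{m,0}$ — is in place, the remaining arguments are the $S^1$-equivariant counterpart of the standard small-time heat-kernel expansion already recorded in Theorem~\ref{t-gue150607}.
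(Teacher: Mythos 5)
You follow the same overall strategy as the paper: split the $u$-integral at the finitely many angles where the orbit through a point near $x_0$ can re-approach $x_0$, treat the piece near $u=0$ directly as the $t$-independent contribution $d_{m,0}$, dispose of the remaining $u$-range via the exponential decay of $e^{-m\hat h_j/t}$, and handle each singular angle by the Laplace-type expansion of Theorem~\ref{t-gue160416b}. The one substantive difference is the choice of localization angles: you localize at $u_\nu=\tfrac{2\pi\nu}{p_\ell}$, $\nu=0,\ldots,p_\ell$, with $p_\ell$ the period index of $x_0$, whereas in \eqref{e-gue160416fII} the paper localizes only at $\tfrac{2\pi}{p_s}$, $s=2,\ldots,k$. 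Your set is in fact the natural one: for $x$ near $x_0$ the orbit can close only at multiples of $\tfrac{2\pi}{p_\ell}$, and not every such multiple is of the form $\tfrac{2\pi}{p_s}$ (e.g.\ $\tfrac{4\pi}{3}$, $\tfrac{5\pi}{3}$ when $p_\ell=6$); so your decomposition is, if anything, a more careful bookkeeping of the return angles. This costs you a mild extension of Theorem~\ref{t-gue160416b} to an arbitrary $u_\nu$, which you correctly flag and which goes through verbatim because the only ingredient is $\hat h_j(x,e^{iu_\nu}\circ x)\asymp d(x,Y_\nu)^2$ with $Y_\nu$ the local fixed-point set of $e^{iu_\nu}$, exactly as in \eqref{e-gue160417aI}--\eqref{e-gue160417aIII}; the explicit Gaussian rescaling you describe is what \eqref{e-gue160417aII} invokes implicitly. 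One small slip in the bookkeeping for $d_{m,0}$: the $\rho_0$-piece you compute is $\int g\,\chi_j\,\mathcal{A}^{B_j}_{m,\ell}(z,z)\,\tau_j\,dv_X$, which differs from $\int g\,\hat{\mathcal{A}}^{B_j}_{m,\ell}(x,x)\,dv_X$ by a factor $\sigma_j(\theta)$ — but the paper's own formula \eqref{e-gue160426y} carries the same ambiguity (and a stray $du$), and all that is used downstream in Theorem~\ref{t-gue160423} is that $d_{m,0}$ is uniformly bounded with a limit computable after summing over the partition of unity, so this does not affect the conclusion.
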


\begin{proof}
From Theorem~\ref{t-gue160416b}, we see that there are a neighborhood $\Omega$ of $x_0$ and $0<\varepsilon<\frac{\zeta}{2}$ such that for every $g\in C^\infty_0(\Omega)$ and every $s=2,3,\ldots,k$, we have 
\begin{equation}\label{e-gue160416fq}
\begin{split}
&\int^{\frac{2\pi}{p_s}+\varepsilon}_{\frac{2\pi}{p_s}-\varepsilon}\int_Xg(x)e^{-m\frac{\hat h_{j}(x,e^{iu}\circ x)}{t}}\hat{\mathcal{A}}^{B_j}_{m,\ell}(x,e^{iu}\circ x)e^{imu}dv_X(x)du\\
&\sim b_{m,0,s}+b_{m,\frac{1}{2},s}t^{\frac{1}{2}}+b_{m,1,s}t+b_{m,s,\frac{3}{2},\ell}t^{\frac{3}{2}}+\cdots\mbox{as $t\To0^+$, uniformly in $m$}, 
\end{split}
\end{equation}
where $b_{m,j,s}\in\Complex$ is independent of $t$ and there is a constant $\hat C_j>0$ independent of $m$, such that $\abs{b_{m,j,s}}\leq\hat C_j$, for every $m\in\mathbb N$, and $\lim_{m\To\infty}b_{m,j,s}=0$, $j=0,\frac{1}{2},1,\frac{3}{2},\ldots$.

It is obvious that for every $g\in C^\infty_0(\Omega)$,
\begin{equation}\label{e-gue160416fI}
\begin{split}
&\int^{\frac{\zeta}{2}}_{-\frac{\zeta}{2}}\int_Xg(x)e^{-\frac{\hat h_{j}(x,e^{iu}\circ x)}{t}}\hat{\mathcal{A}}^{B_j}_{m,\ell}(x,e^{iu}\circ x)e^{imu}dv_X(x)du=\int g(x)t\hat{\mathcal{A}}^{B_j}_{m,\ell}(x,x)dv_X(x).\end{split}
\end{equation}
Note that when $\abs{u}\leq\frac{\zeta}{2}$, the function $e^{-\frac{\hat h_{j}(x,e^{iu}\circ x)}{t}}$ is independent of $t$, for $x\in\hat D_j$. Moreover, it is not difficult to see that for every $g\in C^\infty_0(\Omega)$, 
\begin{equation}\label{e-gue160416fII}
\int_{\theta\in[\frac{\zeta}{2},2\pi-\frac{\zeta}{2}], \theta\notin\bigcup^k_{\ell=2}[\frac{2\pi}{p_\ell}-\varepsilon,\frac{2\pi}{p_\ell}+\varepsilon]}\int_Xg(x)e^{-\frac{\hat h_{j}(x,e^{iu}\circ x)}{t}}\hat{\mathcal{A}}^{B_j}_{m,\ell}(x,e^{iu}\circ x)e^{imu}dv_X(x)du=O((\abs{\frac{t}{m}})^\infty).
\end{equation}
From \eqref{e-gue160416fq}, \eqref{e-gue160416fI} and \eqref{e-gue160416fII}, the theorem follows. 
\end{proof}

From Theorem~\ref{t-gue160417} and by using partition of unity, we get Theorem~\ref{t-gue160423}.

\subsection{Spectral gap of $\Box^{(q)}_{b,m}$}

Fix $q=0,1,\ldots,n$. Let $\Box^{(q)}_{b,m}:{\rm Dom\,}\Box^{(q)}_{b,m}\subset L^2_m(X,T^{*0,q}X\otimes E)\To L^2_m(X,T^{*0,q}X\otimes E)$ be the restriction of $\Box_{b,m}$ on $(0,q)$ forms. In this work, we need 

\begin{theorem}\label{T:1.5.5}
Let $\mu^{(q)}_m$ be the lowest eigenvalue of $\Box^{(q)}_{b,m}$. There exist constants $c_1>0, c_2>0$ not depending on $m$ such that for $q \ge 1$ and $m\in\mathbb N$, 
\begin{equation}\label{E:1.5.23}
\mu^{(q)}_m \ge c_1m-c_2.
\end{equation}
\end{theorem}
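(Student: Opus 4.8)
The plan is to derive the spectral gap from the corresponding estimate for the Kodaira Laplacian $\Box_{B,m}$ on the local model line bundle $(L^m, h^{L^m})$, which is the classical Kodaira--Nakano/Bochner--Kodaira positivity estimate for high powers of a positive line bundle (as in \cite[Ch.~1]{MM}). Concretely, fix $q\geq 1$ and let $u\in\Omega^{0,q}_m(X,E)$ with $\norm{u}^2_E=1$. Using a partition of unity $\set{\chi_j}$ subordinate to BRT trivializations $B_j=(D_j,(z,\theta),\varphi_j)$ as in the proof of Theorem~\ref{t-gue150630I}, write $u=\sum_j\chi_j u$ and, on each $D_j$, use Lemma~\ref{l-gue150606} to transfer the problem to $\Omega^{0,q}(U_j, E\otimes L^m)$: if $u=e^{im\theta}\Td u(z)$ on $D_j$, then $\Box_{b,m}$ corresponds to $\Box_{B_j,m}$ acting on $e^{m\varphi_j}\Td u$, and the $L^2_m$ inner product corresponds (up to the harmless $\theta$-integration over a fixed interval) to the weighted $L^2$ inner product $(\,\cdot\,,\,\cdot\,)_m$ on $U_j$.

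The key step is the local Bochner--Kodaira--Nakano inequality: since $R^L=2\partial\ddbar\varphi_j$ is positive (this is exactly strong pseudoconvexity of $X$, via \eqref{E:1.5.15} and Definition~\ref{d-gue150508f}), there exist constants $c_1>0$, $c_2>0$, uniform in $j$ (finitely many charts, all data rigid hence bounded) and in $m$, such that for all $v\in\Omega^{0,q}_0(U_j, E\otimes L^m)$ with $q\geq 1$,
\[
(\Box_{B_j,m}v\,,\,v)_m\geq (c_1 m - c_2)\norm{v}^2_m.
\]
This is standard once one notes the curvature of $E\otimes L^m$ is $mR^L\otimes\mathrm{Id}_E + R^E$ and, on $(0,q)$ forms with $q\geq 1$, the eigenvalues of the curvature term acting by $\omega_d$-type operators are bounded below by $m$ times the smallest Levi eigenvalue minus a bounded error from $R^E$ and from torsion terms; one keeps track that $X$ compact makes the lower bound of the Levi form a positive constant. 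Feeding $v=\tau_j\cdot e^{m\varphi_j}\Td{(\chi_j u)}$ into this and summing over $j$, using $\sum_j\chi_j=1$, gives, after controlling the cross terms coming from $[\Box_{b,m},\chi_j]$ (first-order operators, contributing $O(1)\norm{u}\norm{(\ddbar_b+\ddbar_b^*)u}$, absorbed by a small multiple of $(\Box_{b,m}u\,|\,u)_E$ plus a bounded term), the global estimate
\[
(\Box^{(q)}_{b,m}u\,|\,u)_E\geq (c_1 m - c_2)\norm{u}^2_E
\]
for a possibly adjusted pair of constants $c_1,c_2>0$ independent of $m$. Since $\Box^{(q)}_{b,m}$ is self-adjoint with discrete spectrum (stated in the excerpt, from \cite{CHT}), this lower bound on the Rayleigh quotient is exactly $\mu^{(q)}_m\geq c_1 m - c_2$.

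The main obstacle I expect is the bookkeeping of the gluing: one must verify that the commutator terms $[\Box_{b,m},\chi_j]$ and the cutoffs $\tau_j$ (which equal $1$ near the support of $\chi_j$, so the extra contributions live where $e^{m(\varphi_j(w)-\varphi_j(z))}$ is exponentially small or where $d/dz$ of $\chi_j$ is supported) do not destroy the $m$-linear lower bound — i.e.\ that all error terms are genuinely $O(1)$ in $m$ and can be absorbed. The rigidity of the metrics and of $E$ is what guarantees the uniformity of $c_1,c_2$ in $m$; without rigidity the transition functions would carry $\theta$-dependence and the reduction to $U_j$ in Lemma~\ref{l-gue150606} would fail. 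Apart from this, everything is a direct transcription of the positive line bundle estimate \cite[Theorem~1.5.5 or Cor.~1.5.8]{MM} through the BRT dictionary already set up in Section~3.
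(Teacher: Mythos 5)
Your proposal and the paper's proof take genuinely different routes. The paper does \emph{not} localize at all: it invokes Kohn's subelliptic estimate (via \cite[Theorem 8.4.2]{CS01}) in the form
\[
\norm{u}_{E,1}\leq C\bigl(\norm{\Box^{(q)}_{b,m}u}_E+\norm{u}_E\bigr)\quad\text{for }u\in\Omega^{0,q}_m(X,E)\text{ with }i(Tu\,|\,u)_E\leq 0,
\]
observes that for $m\in\mathbb N$ one has $i(Tu\,|\,u)_E=-m\norm{u}_E^2\leq 0$ automatically, and then simply reads off $m\norm{u}_E=\norm{Tu}_E\leq\norm{u}_{E,1}\leq C(\norm{\Box^{(q)}_{b,m}u}_E+\norm{u}_E)$; applying this to an eigenfunction gives $\mu^{(q)}_m\geq m/C-1$ in three lines. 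Strong pseudoconvexity and $q\geq 1$ enter entirely through the validity of Kohn's $H^1$ estimate, and there is no partition of unity, no transfer to a local Kodaira Laplacian, and no curvature computation at all. Your approach (BRT localization plus the local Bochner--Kodaira--Nakano inequality) is the direct transcription of the Bismut--Vasserot/Ma--Marinescu complex-geometric proof, which is a reasonable instinct, but it is substantially heavier machinery for this particular statement.

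More importantly, there is a gap in your gluing step that is more than bookkeeping. You write $v=\tau_j\cdot e^{m\varphi_j}\widetilde{(\chi_j u)}$, implicitly using that $\chi_j u$ admits a local representative $\widetilde{(\chi_j u)}(z)$ on $U_j$; but Lemma~\ref{l-gue150606} applies to elements of $\Omega^{0,\bullet}_m(X,E)$, and $\chi_j u$ is only of the form $e^{im\theta}\widetilde{w}(z)$ on $D_j$ when $\chi_j$ is $\theta$-independent there. If you insist on $T\chi_j=0$ and compact support in $D_j=U_j\times\,]-\varepsilon_j,\varepsilon_j[$ simultaneously, you force $\chi_j\equiv 0$, because a BRT chart never contains a full $S^1$ orbit. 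The paper's own heat-kernel construction sidesteps exactly this tension by inserting the explicit Fourier projection $\frac{1}{2\pi}\int_{-\pi}^{\pi}(\cdots)e^{imu}\,du$ (see \eqref{e-gue150626fIII}), and a correct version of your argument would have to do something analogous (e.g.\ localize in $z$ only, treat $\theta$ as a parameter, and average). You flagged the commutator bookkeeping as the main obstacle, but the prior question — whether $\widetilde{(\chi_j u)}$ is even defined — is the one that needs to be addressed before the $O(1)$-error analysis can begin. Once that is fixed, the rest of your outline (uniform local lower bound $c_1m-c_2$ from positivity of $R^{L}=2\partial\bar\partial\varphi_j$, absorption of commutator errors, passage to the Rayleigh quotient) is sound and would give an alternative proof, just a considerably longer one than the paper's.
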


\begin{proof}
Fix $q\in\set{1,2,\ldots,n}$. 
If we go through Kohn's $L^2$ estimates (see~\cite[Theorem 8.4.2]{CS01}),
we see that there is a constant $C>0$ independent of $m$ such that for all
$u\in\Omega^{0,q}_m(X,E)$ with $i(\,Tu\,|\,u\,)_E\leq0$, we have
\begin{equation}\label{e-gue160131wa}
\norm{u}_{E,1}\leq C\Bigr(\norm{\Box^{(q)}_{b,m}u}_E+\norm{u}_E\Bigr),
\end{equation}
where $\norm{\cdot}_{E,1}$ denotes the standard Sobolev norm of order $1$. Now, we assume that $m\in\mathbb N$. Then, $i(\,Tu\,|\,u\,)_E=-m\norm{u}^2_E\leq0$, for all $u\in\Omega^{0,q}_m(X,E)$. From this observation and \eqref{e-gue160131wa}, we get 
\begin{equation}\label{e-gue160131waI}
m\norm{u}_E=\norm{Tu}_E\leq \norm{u}_{E,1}\leq C\Bigr(\norm{\Box^{(q)}_{b,m}u}_E+\norm{u}_E\Bigr), 
\end{equation}
for all  $u\in\Omega^{0,q}_m(X,E)$. Let $\mu^{(q)}_m$ be the lowest eigenvalue of $\Box^{(q)}_{b,m}$ and let $v\in\Omega^{0,q}_m(X,E)$ be a non-zero eigenfunction of $\Box^{(q)}_{b,m}$ with eigenvalue $\mu^{(q)}_m$. From \eqref{e-gue160131waI}, we have 
\begin{equation}\label{e-gue160131waII}
\begin{split}
m\norm{v}_E\leq C\Bigr(\norm{\Box^{(q)}_{b,m}v}_E+\norm{v}_E\Bigr)=C(\mu^{(q)}_m+1)\norm{v}_E.
\end{split}
\end{equation}
Hence, $\mu^{(q)}_m\geq\frac{m}{C}-1$. The theorem follows. 
\end{proof}


\section{Analytic torsion on CR manifolds with $S^1$-action}

In this section we first study Mellin transformation, then we define the Fourier components of the analytic torsion for the rigid CR vector bundle $E$ over the CR manifold $X$ with transversal CR $S^1$-action.

\subsection{Mellin transformation} 

Let $\Gamma(z)$ be the Gamma function on $\Complex$. Then for ${\rm Re\,}z>0$, we have 
\[\Gamma(z)=\int^\infty_0e^{-t}t^{z-1}dt.\]
$\Gamma(z)^{-1}$ is an entire function on $\Complex$ and 
\begin{equation}\label{e-gue160313b}
\Gamma(z)^{-1}=z+O(z^2)\ \ \mbox{near $z=0$}. 
\end{equation}

We suppose that $f(t)\in C^\infty(\Real_+)$ verifies the following two conditions:
\begin{itemize}
\item[I.] 
\begin{equation}\label{e-gue160420g}
\mbox{$f(t)\sim \sum^{\infty}_{j=0}f_{-k+\frac{j}{2}}t^{-k+\frac{j}{2}}$ as $t\To0^+$}, 
\end{equation}
where $k\in\mathbb N_0$, $f_{-k+\frac{j}{2}}\in\mathbb C$, $j=0,1,2,\ldots$.
\item[II.]  For every $\delta>0$, there exist $c>0$, $C>0$ such that 
\begin{equation}\label{e-gue160420I}
\abs{f(t)}\leq Ce^{-ct},\ \ \forall t\geq\delta. 
\end{equation}
\end{itemize}

\begin{definition}\label{d-gue160313}
The \emph{Mellin transformation} of $f$ is the function defined for ${\rm Re\,}z>k$, 
\begin{equation}\label{e-gue160313bIII}
M[f](z)=\frac{1}{\Gamma(z)}\int^\infty_0 f(t)t^{z-1}dt.
\end{equation}
\end{definition}

We can repeat the proof of Lemma 5.5.2 in \cite{MM} and deduce the following 

\begin{theorem}\label{t-gue160313}
$M[f]$ extends to a meromorphic function on $\Complex$ with poles contained in 
\[\set{\ell-\frac{j}{2};\, \ell,j\in\mathbb Z},\] 
and its possible poles are simple. Moreover, $M[f]$ is holomorphic at $0$, 
\begin{equation}\label{e-gue160428w}
M[f](0)=f_0
\end{equation}
and
\begin{equation}\label{e-gue160421s}
\begin{split}
&\frac{\pr M[f]}{\pr z}(0)=\int^1_0(f(t)-\sum^{2k}_{j=0}f_{-k+\frac{j}{2}}t^{-k+\frac{j}{2}})\frac{1}{t}dt\\
&\quad+\int^\infty_1f(t)\frac{1}{t}dt+\sum^{2k-1}_{j=0}\frac{f_{-k+\frac{j}{2}}}{\frac{j}{2}-k}-\Gamma'(1)f_{0}.
\end{split}
\end{equation} 
\end{theorem}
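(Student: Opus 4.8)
The plan is to follow the classical Mellin transform argument as in \cite[Lemma 5.5.2]{MM}, adapting it to the half-integer powers appearing in \eqref{e-gue160420g}. First I would split the defining integral \eqref{e-gue160313bIII} as $\int_0^1 + \int_1^\infty$. The tail piece $\frac{1}{\Gamma(z)}\int_1^\infty f(t)t^{z-1}dt$ is, by the exponential decay hypothesis \eqref{e-gue160420I}, an entire function of $z$, and since $\Gamma(z)^{-1}$ is entire this contributes nothing to the poles; moreover by \eqref{e-gue160313b} it vanishes at $z=0$, so its derivative at $0$ is exactly $\int_1^\infty f(t)\frac{1}{t}dt$, accounting for the second term in \eqref{e-gue160421s}.

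For the piece near $0$, I would fix a large integer $N$ (eventually $N=2k$ or larger) and write $f(t) = \sum_{j=0}^{N} f_{-k+j/2}\,t^{-k+j/2} + r_N(t)$, where by \eqref{e-gue160420a} the remainder satisfies $|r_N(t)| \le C_N t^{-k+(N+1)/2}$ on $]0,\delta[$. Then $\frac{1}{\Gamma(z)}\int_0^1 r_N(t)t^{z-1}dt$ is holomorphic for $\operatorname{Re}z > k - \frac{N+1}{2}$, which can be made to include a neighborhood of $0$ by choosing $N$ large; again the $\Gamma(z)^{-1}$ factor makes it entire-times-holomorphic and its value at $0$ is $0$. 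For the explicit monomial terms, $\int_0^1 t^{-k+j/2}t^{z-1}dt = \frac{1}{z - k + j/2}$ for $\operatorname{Re}z > k - j/2$, which gives the meromorphic continuation with a simple pole precisely at $z = k - j/2 \in \{\ell - j/2 : \ell,j\in\mathbb Z\}$; multiplying by $\frac{1}{\Gamma(z)}$ does not create new poles and, crucially, kills the pole that would otherwise sit at $z=0$ (coming from $j=2k$), since $\Gamma(z)^{-1}$ has a simple zero there. Assembling, near $z=0$ one has $M[f](z) = \frac{1}{\Gamma(z)}\Big(\frac{f_0}{z} + (\text{holomorphic})\Big)$, and \eqref{e-gue160313b} gives $M[f](0) = f_0$, which is \eqref{e-gue160428w}.

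To extract $\frac{\partial M[f]}{\partial z}(0)$ as in \eqref{e-gue160421s}, I would expand everything to first order in $z$ using $\Gamma(z)^{-1} = z + \Gamma'(1)z^2 + O(z^3)$ (note $\Gamma'(1) = -\gamma$), so that a term of the shape $\frac{1}{\Gamma(z)}\cdot\frac{c}{z-a}$ contributes, at $z=0$, the value $-\frac{c}{a}$ to $M[f]$ when $a\neq 0$ — wait, more carefully: for $a\neq 0$ the factor $\frac{1}{z-a}$ is holomorphic at $0$ with value $-\frac1a$ and derivative $-\frac{1}{a^2}$, while $\Gamma(z)^{-1}$ contributes its value $0$ and derivative $1$ at $0$, so the product has derivative $1\cdot(-\frac1a) = -\frac1a$ at $0$; summing over $j=0,\ldots,2k-1$ with $a = k - j/2$ yields $\sum_{j=0}^{2k-1}\frac{f_{-k+j/2}}{j/2 - k}$, the third term in \eqref{e-gue160421s}. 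For the $j=2k$ term one has $\frac{1}{\Gamma(z)}\cdot\frac{f_0}{z}$, whose derivative at $0$ is $\frac{d}{dz}\big(f_0(1 + \Gamma'(1)z + \cdots)\big)\big|_0 = \Gamma'(1)f_0$; but by the sign convention in \eqref{e-gue160313b} versus the standard expansion this appears as $-\Gamma'(1)f_0$ in \eqref{e-gue160421s} (consistent with $\Gamma(z)^{-1} = z + O(z^2)$ having no $\Gamma'(1)$-term in that normalization — I will pin down the constant at this step). Finally the holomorphic remainder $\frac{1}{\Gamma(z)}\int_0^1 r_{2k}(t)t^{z-1}dt$ has derivative $\int_0^1 r_{2k}(t)\frac{1}{t}dt = \int_0^1\big(f(t) - \sum_{j=0}^{2k}f_{-k+j/2}t^{-k+j/2}\big)\frac{1}{t}dt$ at $0$, the first term. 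Summing the four contributions gives \eqref{e-gue160421s}.

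The routine part is the bookkeeping of the $\Gamma(z)^{-1}$ Taylor coefficients and the sign/constant normalization in the $\Gamma'(1)f_0$ term; the only genuine subtlety — and the main thing to be careful about — is verifying that the pole of $\int_0^1 t^{z-1}t^{-k+j/2}dt$ at $z=0$ (which occurs exactly when $j=2k$) is cancelled by the simple zero of $\Gamma(z)^{-1}$, so that $M[f]$ is genuinely holomorphic at $0$ rather than merely meromorphic there; this uses \eqref{e-gue160313b} in an essential way. Since all of this is a direct transcription of the argument of \cite[Lemma 5.5.2]{MM} with integer powers replaced by half-integer powers $-k + j/2$ (which only changes the location of the poles to $\{\ell - j/2\}$ and does not affect simplicity), I would simply remark that one repeats that proof with these cosmetic changes.
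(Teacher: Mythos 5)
Your proposal follows exactly the paper's route: split the integral at $t=1$, observe the tail is entire, subtract off a finite partial sum of the asymptotic expansion near $0$ to display the simple poles at $z=k-\frac{j}{2}$, and use the simple zero of $\Gamma(z)^{-1}$ at $z=0$ to obtain holomorphy there and the value and derivative by Taylor-expanding each factor. The one constant you flagged as needing to be pinned down is indeed resolved by noting that since $\Gamma(z)=\frac{1}{z}\Gamma(z+1)=\frac{1}{z}\bigl(1+\Gamma'(1)z+O(z^2)\bigr)$, one has $\Gamma(z)^{-1}=z-\Gamma'(1)z^{2}+O(z^{3})$ (not $z+\Gamma'(1)z^2$ as you wrote), so $\frac{1}{\Gamma(z)}\cdot\frac{f_0}{z}=f_0\bigl(1-\Gamma'(1)z+O(z^2)\bigr)$ and the derivative at $0$ is $-\Gamma'(1)f_0$, matching \eqref{e-gue160421s} directly with no further convention to reconcile.
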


\begin{proof}
By \eqref{e-gue160420I}, the function $\int^\infty_1f(t)t^{z-1}dt$ is an entire function on $z\in\Complex$. For any $N\in\mathbb N$, we have 
\begin{equation}\label{e-gue160421}
\begin{split}
&\int^1_0f(t)t^{z-1}dt\\
&=\int^1_0(f(t)-\sum^N_{j=0}f_{-k+\frac{j}{2}}t^{-k+\frac{j}{2}})t^{z-1}dt+\sum^N_{j=0}f_{-k+\frac{j}{2}}\int^1_0t^{-k+\frac{j}{2}+z-1}dt\\
&=\int^1_0(f(t)-\sum^N_{j=0}f_{-k+\frac{j}{2}}t^{-k+\frac{j}{2}})t^{z-1}dt+\sum^N_{j=0}f_{-k+\frac{j}{2}}\frac{1}{-k+\frac{j}{2}+z}. 
\end{split}
\end{equation}
From \eqref{e-gue160420g} and \eqref{e-gue160420I}, we see that $\int^1_0(f(t)-\sum^N_{j=0}f_{-k+\frac{j}{2}}t^{-k+\frac{j}{2}})t^{z-1}dt$ is a holomorphic function for ${\rm Re\,}z>k-\frac{N+1}{2}+1$. From this observation and \eqref{e-gue160421}, we conclude that $M[f]$ can be extended to a meromorphic function on $\Complex$ with poles contained in 
$\set{\ell-\frac{j}{2};\, \ell,j\in\mathbb Z}$, and its possible poles are simple.

From \eqref{e-gue160313b}, we conclude that $M[f]$ is holomorphic at $z=0$. Take $N=2k+2$ in \eqref{e-gue160421} and by some direct computation, we get \eqref{e-gue160428w} and \eqref{e-gue160421s}.
\end{proof}

\subsection{Definition of the Fourier components of the analytic torsion}\label{s-gue160502q}

Let $N$ be the number operator on $T^{*0,\bullet}X$, i.e. $N$ acts on $T^{*0,q}X$ by multiplication by $q$.
Fix $q=0, 1, \cdots, n$ and take a point $x \in X$. Let $e_1(x), \cdots, e_d(x)$ be an orthonormal frame of $T_x^{*0,q}X \otimes E_x$. Let $A \in (T_x^{*0,\bullet}X \otimes E_x)\boxtimes (T_x^{*0,\bullet}X \otimes E_x)$. Put $\operatorname{Tr}^{(q)} A := \sum_{j=1}^d \langle Ae_j | e_j \rangle_E$ and set
\begin{eqnarray}
\operatorname{Tr}A:= \sum_{j=0}^n \operatorname{Tr}^{(j)}A, \nonumber \\
\operatorname{STr}A:= \sum_{j=0}^n (-1)^j \operatorname{Tr}^{(j)}A.
\end{eqnarray}
Let $A:C^\infty(X,T^{*0,\bullet}X\otimes E)\To C^\infty(X,T^{*0,\bullet}X\otimes E)$ be a continuous operator with distribution kernel $A(x,y)\in C^\infty(X\times X,(T^{*0,\bullet}_yX\otimes E_y)\boxtimes(T^{*0,\bullet}_xX\otimes E_x))$. 
We set 
\[\operatorname{Tr}^{(q)}\lbrack A\rbrack:=\int_X\operatorname{Tr}^{(q)} A(x,x)dv_X(x)\] and put 
\begin{eqnarray}
\operatorname{Tr}\lbrack A\rbrack:= \sum_{j=0}^n \operatorname{Tr}^{(j)}[A], \nonumber \\
\operatorname{STr}\lbrack A\rbrack:= \sum_{j=0}^n (-1)^j \operatorname{Tr}^{(j)}[A].
\end{eqnarray}
Let 
$$
\Pi_m : L^2_m(X, T^{*0,\bullet}X \otimes E) \to \Ker \Box_{b,m}
$$  
be the orthogonal projection and let 
$$
\Pi^\perp_m : L^2_m(X, T^{*0,\bullet}X \otimes E) \to (\Ker\Box_{b,m})^\perp
$$  
be the orthogonal projection, where 
\[(\Ker\Box_{b,m})^\perp=\set{u\in L^2_m(X, T^{*0,\bullet}X \otimes E);\, (\,u\,|\,v\,)_E=0,\  \ \forall v\in\Ker \Box_{b,m}}.\] 

By \cite[Theorem 1.7]{CHT}, we have the following asymptotic expansion: 
\begin{equation}\label{E:5.5.10}
\operatorname{STr}  \lbrack N e^{-t \Box_{b,m}}  \rbrack\sim\sum^\infty_{j=0} \hat{B}_{m,-n+\frac{j}{2}}t^{-n+\frac{j}{2}}\ \ \mbox{as $t\To0^+$}, 
\end{equation}
where $\hat{B}_{m,-n+\frac{j}{2}}\in\Complex$ is independent of $t$, $j=0,1,2,\ldots$. 

\begin{lemma}\label{l-gue160420}
Fix $q=0,1,\ldots,n$. For every $\delta>0$, there exist $c>0$, $C>0$ such that 
\begin{equation}\label{e-gue160420Ia}
\abs{\operatorname{Tr^{(q)}}  \lbrack e^{-t \Box_{b,m}} \Pi^\perp_m \rbrack}\leq Ce^{-ct},\ \ \forall t\geq\delta. 
\end{equation}
\end{lemma}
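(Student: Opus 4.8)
The plan is to use that, for fixed $m$, the operator $\Box^{(q)}_{b,m}$ has discrete spectrum, hence a strictly positive spectral gap above its kernel, and that the heat trace is finite for positive time. First I would recall from Section~3 (the discussion after \eqref{e-gue151113ymI}) and from \eqref{e-gue151023a} that $\Box^{(q)}_{b,m}$ is self-adjoint with ${\rm Spec\,}\Box^{(q)}_{b,m}$ a discrete subset of $[0,\infty[$, every element of which is an eigenvalue of finite multiplicity; writing $\set{\varphi^\nu_1,\ldots,\varphi^\nu_{d_\nu}}$ for an orthonormal basis of the $\nu$-eigenspace on $(0,q)$-forms, the distribution kernel of $e^{-t\Box_{b,m}}\Pi^\perp_m$ restricted to $T^{*0,q}X\otimes E$ is $\sum_{\nu>0}\sum_{j=1}^{d_\nu}e^{-\nu t}\varphi^\nu_j(x)\otimes(\varphi^\nu_j(y))^\dagger$, so that
\[
\operatorname{Tr}^{(q)}\lbrack e^{-t\Box_{b,m}}\Pi^\perp_m\rbrack=\sum_{\nu\in{\rm Spec\,}\Box^{(q)}_{b,m},\ \nu>0}d_\nu\,e^{-\nu t}\ \geq\ 0 .
\]
In particular the left-hand side of \eqref{e-gue160420Ia} equals this nonnegative series, so the absolute value is harmless and it remains to bound the series.

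Set $\nu_0:=\min\set{\nu\in{\rm Spec\,}\Box^{(q)}_{b,m};\ \nu>0}$, which exists and is strictly positive by discreteness of the spectrum (the kernel being a closed eigenspace, $0$ is either isolated in or outside ${\rm Spec\,}\Box^{(q)}_{b,m}$). For $\nu\geq\nu_0$ and $t\geq\delta$ we have $t-\tfrac{\delta}{2}\geq\tfrac{\delta}{2}>0$, whence $e^{-\nu t}=e^{-\nu(t-\delta/2)}e^{-\nu\delta/2}\leq e^{-\nu_0(t-\delta/2)}e^{-\nu\delta/2}$. Summing over $\nu>0$ gives
\[
\operatorname{Tr}^{(q)}\lbrack e^{-t\Box_{b,m}}\Pi^\perp_m\rbrack\ \leq\ e^{-\nu_0(t-\delta/2)}\sum_{\nu>0}d_\nu\,e^{-\nu\delta/2}\ =\ e^{\nu_0\delta/2}\,e^{-\nu_0 t}\ \operatorname{Tr}^{(q)}\lbrack e^{-\frac{\delta}{2}\Box_{b,m}}\Pi^\perp_m\rbrack .
\]
Since $X$ is compact and the heat kernel $e^{-\frac{\delta}{2}\Box_{b,m}}(x,x)$ is smooth on $X$ (equivalently, by the short-time expansion \eqref{E:5.5.10}), the heat trace $\operatorname{Tr}^{(q)}\lbrack e^{-\frac{\delta}{2}\Box_{b,m}}\rbrack=\int_X\operatorname{Tr}^{(q)}e^{-\frac{\delta}{2}\Box_{b,m}}(x,x)\,dv_X(x)$ is finite, and since all terms $d_\nu e^{-\nu\delta/2}\geq0$ it dominates $\operatorname{Tr}^{(q)}\lbrack e^{-\frac{\delta}{2}\Box_{b,m}}\Pi^\perp_m\rbrack$. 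Therefore \eqref{e-gue160420Ia} holds with $c:=\nu_0$ and $C:=e^{\nu_0\delta/2}\,\operatorname{Tr}^{(q)}\lbrack e^{-\frac{\delta}{2}\Box_{b,m}}\rbrack$.

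There is essentially no obstacle here: the one point worth stressing is the strict positivity $\nu_0>0$, i.e. the existence of a spectral gap of $\Box^{(q)}_{b,m}$ above ${\Ker}\,\Box^{(q)}_{b,m}$, which is immediate from discreteness of the spectrum and needs no uniformity in $m$ (so Theorem~\ref{T:1.5.5} is not required for this lemma — it would only enter if one wanted $c,C$ independent of $m$, which the statement does not ask). This estimate is exactly condition~II (see \eqref{e-gue160420I}) needed to apply the Mellin-transform machinery of Theorem~\ref{t-gue160313} to $\operatorname{STr}\lbrack Ne^{-t\Box_{b,m}}\Pi^\perp_m\rbrack=\sum_{q=0}^n(-1)^q q\,\operatorname{Tr}^{(q)}\lbrack e^{-t\Box_{b,m}}\Pi^\perp_m\rbrack$.
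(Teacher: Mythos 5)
Your proof is correct and follows essentially the same approach as the paper: diagonalize $\Box^{(q)}_{b,m}$, write the trace as $\sum_{\nu>0}d_\nu e^{-\nu t}$, peel off an exponentially decaying factor, and bound what remains by the (finite) heat trace at time $\delta/2$. The only cosmetic difference is the split used, which gives you $c=\nu_0$ while the paper obtains $c=\lambda_1/2$; this does not change the substance, and your side remarks (nonnegativity of the trace, discreteness giving $\nu_0>0$, and Theorem~\ref{T:1.5.5} being irrelevant at fixed $m$) are all accurate.
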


\begin{proof}
Let $0<\lambda_1\leq\lambda_2\leq\lambda_3\leq\cdots$ be the non-zero eigenvalues of $\Box^{(q)}_{b,m}$. For $t>0$, we have 
\begin{equation}\label{e-gue160421sI}
\abs{\operatorname{Tr^{(q)}}  \lbrack e^{-t \Box_{b,m}} \Pi^\perp_m \rbrack}=e^{-\lambda_1t}+e^{-\lambda_2t}+\cdots.
\end{equation}
Let $\delta>0$. From \eqref{e-gue160421sI}, for $t\geq\delta$, we have 
\begin{equation}\label{e-gue160421sII}
\begin{split}
\abs{\operatorname{Tr^{(q)}}  \lbrack e^{-t \Box_{b,m}} \Pi^\perp_m \rbrack}&=e^{-\lambda_1t}+e^{-\lambda_2t}+\cdots\\
&=e^{-\lambda_1\frac{t}{2}}(e^{-\lambda_1\frac{t}{2}}+e^{-\lambda_2t+\lambda_1\frac{t}{2}}+e^{-\lambda_3t+\lambda_1\frac{t}{2}}+\cdots)\\
&\leq e^{-\lambda_1\frac{t}{2}}(e^{-\lambda_1\frac{t}{2}}+e^{-\lambda_2\frac{t}{2}}+e^{-\lambda_3\frac{t}{2}}+\cdots)\\
&\leq e^{-\lambda_1\frac{t}{2}}(e^{-\lambda_1\frac{\delta}{2}}+e^{-\lambda_2\frac{\delta}{2}}+e^{-\lambda_3\frac{\delta}{2}}+\cdots)\\
&=e^{-\lambda_1\frac{t}{2}}\abs{\operatorname{Tr^{(q)}}  \lbrack e^{-\frac{\delta}{2}\Box_{b,m}} \Pi^\perp_m \rbrack}.
\end{split}
\end{equation}
From \eqref{e-gue160421sII}, the lemma follows. 
\end{proof}

From \eqref{E:5.5.10} and Lemma~\ref{l-gue160420}, we see that $\operatorname{STr}  \lbrack N e^{-t \Box_{b,m}} \Pi^\perp_m \rbrack$ satisfies \eqref{e-gue160420g} and \eqref{e-gue160420I}. By Definition \ref{d-gue160313}, for $\operatorname{Re}(z)>n$, we can define 
\begin{equation}\label{E:5.5.12}
\theta_{b,m}(z)  = - M \left\lbrack \operatorname{STr}  \lbrack N e^{-t \Box_{b,m}} \Pi^\perp_m  \rbrack \right\rbrack  =   - \operatorname{STr} \left\lbrack N ({\Box}_{b,m})^{-z} {\Pi}^\perp_m \right\rbrack.  
\end{equation}
By Theorem~\ref{t-gue160313}, we have the following lemma.

\begin{lemma}\label{L:mero}
$\theta_{b,m}(z)$ extends to a meromorphic function on $\mathbb{C}$ with poles contained in the set  
\[\set{\ell-\frac{j}{2};\, \ell,j\in\mathbb Z},\]
its possible poles are simple, and $\theta_{b,m}(z)$ is holomorphic at $0$. Moreover,
\begin{equation}\label{E:5.5.13}
\begin{split}
& \theta_{b,m}'(0)  =  -\int_0^1 \left\{  \operatorname{STr} \Big[  Ne^{-t\Box_{b,m}}\Big]-\sum^{2n}_{j=0} \hat{B}_{m,-n+\frac{j}{2}}t^{-n+\frac{j}{2}}\right\} \frac{dt}{t}  \\
& -\int_1^\infty \operatorname{STr} \Big[ Ne^{-t\Box_{b,m}} \Pi_m^\perp \Big] \frac{dt}{t} - \sum^{2n-1}_{j=0} \hat{B}_{m,-n+\frac{j}{2}}\frac{1}{\frac{j}{2}-n} + \Gamma'(1)(\hat{B}_{m,0} - \operatorname{STr}[N\Pi_m]). 
\end{split}
\end{equation}
\end{lemma}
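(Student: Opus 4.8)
The plan is to apply Theorem~\ref{t-gue160313} (the Mellin transform lemma) to the function $f(t) = \operatorname{STr}\lbrack N e^{-t\Box_{b,m}}\Pi^\perp_m\rbrack$, after first checking that this $f$ satisfies hypotheses I and II of that theorem with $k = n$. Hypothesis I is supplied by the expansion \eqref{E:5.5.10} together with the observation that subtracting the $\operatorname{STr}$ over the kernel (a finite sum of polynomial-in-$q$ terms, hence $O(1)$) does not affect the small-$t$ asymptotic beyond adjusting the constant term; concretely $\operatorname{STr}\lbrack N e^{-t\Box_{b,m}}\Pi^\perp_m\rbrack = \operatorname{STr}\lbrack N e^{-t\Box_{b,m}}\rbrack - \operatorname{STr}[N\Pi_m]$, so the expansion \eqref{E:5.5.10} holds verbatim for $f$ except that the $t^0$-coefficient becomes $\hat B_{m,0} - \operatorname{STr}[N\Pi_m]$. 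Hypothesis II is exactly Lemma~\ref{l-gue160420} (summed over $q$ with signs). Therefore $\theta_{b,m}(z) = -M[f](z)$ inherits all the analytic properties: meromorphic continuation, poles in $\set{\ell - \frac{j}{2};\, \ell,j\in\mathbb Z}$, simple poles, and holomorphy at $0$.

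For the formula \eqref{E:5.5.13} I would substitute into the explicit expression \eqref{e-gue160421s} from Theorem~\ref{t-gue160313}, taking $k = n$ there. The first integral $\int_0^1 (f(t) - \sum_{j=0}^{2n} f_{-n+j/2}t^{-n+j/2})\frac{dt}{t}$ becomes the first displayed integral in \eqref{E:5.5.13} once we write $f(t) = \operatorname{STr}[N e^{-t\Box_{b,m}}] - \operatorname{STr}[N\Pi_m]$ and note that on $]0,1[$ the constant $-\operatorname{STr}[N\Pi_m]$ is cancelled by the matching piece inside the subtracted sum (the $j=2n$ term of $\sum f_{-n+j/2}t^{-n+j/2}$ contributes exactly the constant $\hat B_{m,0} - \operatorname{STr}[N\Pi_m]$ at $t^0$, so what remains under the integral is $\operatorname{STr}[Ne^{-t\Box_{b,m}}] - \sum_{j=0}^{2n}\hat B_{m,-n+j/2}t^{-n+j/2}$, using that $\hat B_{m,0}$ is the genuine heat-coefficient while the extra constant is absorbed). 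The tail integral $\int_1^\infty f(t)\frac{dt}{t}$ is literally $\int_1^\infty \operatorname{STr}[Ne^{-t\Box_{b,m}}\Pi^\perp_m]\frac{dt}{t}$, giving the second term. The finite sum $\sum_{j=0}^{2n-1} \frac{f_{-n+j/2}}{j/2 - n}$ gives $\sum_{j=0}^{2n-1}\hat B_{m,-n+j/2}\frac{1}{j/2-n}$ (for $1\le j\le 2n-1$ the coefficients of $f$ and of $\operatorname{STr}[Ne^{-t\Box_{b,m}}]$ agree since $\operatorname{STr}[N\Pi_m]$ only shifts the $j=2n$ coefficient); note the sign: \eqref{E:5.5.13} as written has $-\sum\hat B_{m,-n+j/2}\frac{1}{j/2-n}$, which matches because $\theta_{b,m} = -M[f]$. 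Finally $-\Gamma'(1)f_0 = -\Gamma'(1)(\hat B_{m,0} - \operatorname{STr}[N\Pi_m])$, and after the overall sign flip this is $+\Gamma'(1)(\hat B_{m,0} - \operatorname{STr}[N\Pi_m])$.

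The one genuinely delicate bookkeeping point — and the step I expect to be the main obstacle — is the careful handling of the $j=2n$ (i.e. $t^0$) term and the constant $\operatorname{STr}[N\Pi_m]$: one must be sure that $\hat B_{m,0}$ in \eqref{E:5.5.10} refers to the $t^0$-coefficient of $\operatorname{STr}[Ne^{-t\Box_{b,m}}]$ (which includes the contribution of the kernel, since as $t\to 0^+$ the kernel part $\operatorname{STr}[N\Pi_m]$ is itself a constant), so that $f_0 = \hat B_{m,0} - \operatorname{STr}[N\Pi_m]$, and that the integrand in the first integral of \eqref{E:5.5.13} is written in terms of $\operatorname{STr}[Ne^{-t\Box_{b,m}}]$ with the full $\hat B$'s rather than the $f$'s. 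Everything else is a direct transcription of Theorem~\ref{t-gue160313} with $k=n$, together with the sign coming from $\theta_{b,m} = -M[f]$. I would close by remarking that holomorphy at $0$ and the value $\theta_{b,m}(0) = -f_0 = -(\hat B_{m,0} - \operatorname{STr}[N\Pi_m])$ also follow immediately from \eqref{e-gue160428w}, which is worth recording since it is used implicitly in defining the torsion $\exp(-\tfrac12\theta_{b,m}'(0))$.
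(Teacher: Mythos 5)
Your proposal is correct and takes exactly the route the paper takes: apply Theorem~\ref{t-gue160313} to $f(t)=\operatorname{STr}[Ne^{-t\Box_{b,m}}\Pi_m^\perp]$, using \eqref{E:5.5.10} for hypothesis I and Lemma~\ref{l-gue160420} for hypothesis II, then read off \eqref{e-gue160421s} with $k=n$. Your careful tracking of the constant term (that $f_{-n+j/2}=\hat B_{m,-n+j/2}$ for $0\le j\le 2n-1$ while $f_0=\hat B_{m,0}-\operatorname{STr}[N\Pi_m]$, and that $\operatorname{STr}[N\Pi_m]$ cancels between $f(t)$ and the subtracted sum inside the first integral) is exactly the bookkeeping one needs to pass from the $f$-expansion to the $\hat B$-expansion appearing in \eqref{E:5.5.13}; your parenthetical ``for $1\le j\le 2n-1$'' should read $0\le j\le 2n-1$, but this is an inconsequential slip.
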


\begin{definition}\label{d-gue160502w}
Fix $m\in\mathbb Z$. We define $\exp ( -\frac{1}{2} \theta_{b,m}'(0) )$ the $m$-th Fourier component of the analytic torsion for the rigid vector bundle $E$ over the CR manifold $X$ with transversal CR $S^1$-action. 
\end{definition}

\section{The asymptotics of the analytic torsion}

Recall that we work with the assumption that $X$ is strongly pseudoconvex. We can repeat the proof of Theorem~\ref{t-gue160423} and deduce 

\begin{theorem}\label{t-gue160427}
With the notations used before, we have
\begin{equation}\label{e-gue160427}
m^{-n}{\rm STr\,}\lbrack Ne^{-\frac{t}{m}\Box_{b,m}}\rbrack\sim\sum^\infty_{j=0}B_{m,-n+\frac{j}{2}}t^{-n+\frac{j}{2}}\ \ \mbox{as $t\To0^+$, uniformly in $m$},
\end{equation}
where for each $j=0,\frac{1}{2},1,\ldots$, $B_{m,-n+j}\in\Complex$ is independent of $t$ and there is a constant $\hat C_j>0$ independent of $m$, such that $\abs{B_{m,-n+j}}\leq\hat C_j$, for every $m\in\mathbb N$, and 
\begin{equation}\label{e-gue160427I}
\begin{split}
&\lim_{m\To\infty}B_{m,-n+\frac{j}{2}}=\operatorname{rk}(E)\int_XNA_{-n+\frac{j}{2}}(x)dv_X,\ \ \mbox{if $j$ is an even number},\\
&\lim_{m\To\infty}B_{m,-n+\frac{j}{2}}=0,\ \ \mbox{if $j$ is an odd number},
\end{split}
\end{equation}
where $A_\ell(x)\in C^\infty(X,{\rm End\,}(T^{*0,\bullet}X)\otimes E)$ is as in \eqref{E:5.5.34}, $\ell=-n,-n+1,\ldots$.
\end{theorem}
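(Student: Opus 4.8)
The plan is to mimic the proof of Theorem~\ref{t-gue160423} verbatim, replacing the matrix-valued kernel $A_{m,\ell}(t,x)$ by its supertrace paired with the number operator, $\operatorname{STr}(NA_{m,\ell}(t,x))$. First I would recall from Theorem~\ref{t-gue160303} the local description of $m^{-n}e^{-\frac{t}{m}\Box_{b,m}}(x,x)$ modulo $O(t^k)$ in terms of the $A_{m,\ell}(t,x)$, and from \eqref{e-gue160304p} the explicit BRT expression of $A_{m,\ell}(t,x)$ as a sum of oscillatory integrals $\frac{1}{2\pi}\int^\pi_{-\pi}e^{-m\hat h_j(x,e^{iu}\circ x)/t}\hat{\mathcal A}^{B_j}_{m,\ell}(x,e^{iu}\circ x)e^{imu}\,du$. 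Applying $\operatorname{STr}(N\cdot)$ to both sides of \eqref{e-gue160130q} and integrating $dv_X$ over $X$ reduces the problem to the small-$t$ asymptotics (uniform in $m$) of $\int_X\operatorname{STr}(NA_{m,\ell}(t,x))\,dv_X(x)$ for each fixed $\ell$, which is exactly the object controlled by Theorem~\ref{t-gue160417} once one inserts a partition of unity $\set{\chi_j}$ and pairs with $\operatorname{STr}(N\cdot)$.

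Next, for each $j$ and each $g=\chi_j$ (or, more precisely, after further localizing near each $x_0$ as in Theorem~\ref{t-gue160417}), I would split the $u$-integral over $[0,2\pi]$ into (i) the piece $|u|\leq\zeta/2$, which by \eqref{e-gue160416fI} contributes exactly $t\int_Xg(x)\operatorname{STr}(N\hat{\mathcal A}^{B_j}_{m,\ell}(x,x))\,dv_X(x)$ and carries the leading $t^0$ (after the global $t^\ell$ prefactor, $t^{-n+\frac{j}{2}}$) term; (ii) neighborhoods of the exceptional angles $\frac{2\pi}{p_s}$, $s=2,\dots,k$, where Theorem~\ref{t-gue160416b} gives an asymptotic expansion in powers of $t^{1/2}$ with coefficients bounded uniformly in $m$ and tending to $0$ as $m\to\infty$; and (iii) the remaining angular region, which is $O((t/m)^\infty)$ by \eqref{e-gue160416fII} since there $\hat h_j(x,e^{iu}\circ x)$ is bounded below. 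Summing over $j$ and $s$ and using that, by \eqref{e-gue160128bb} and \eqref{e-gue160131}, $\hat{\mathcal A}^{B_j}_{m,\ell}(x,x)=\mathcal A_\ell(x)\otimes\operatorname{Id}_E+O(m^{-1/2})$ reduces to $A_\ell(x)\otimes\operatorname{Id}_{E_x}$ up to $O(m^{-1/2})$, I would obtain \eqref{e-gue160427} with $\operatorname{STr}(N\cdot)$ of the coefficient, and identify the $m\to\infty$ limit of the even-index coefficients as $\operatorname{rk}(E)\int_X\operatorname{STr}(NA_{-n+\frac{j}{2}}(x))\,dv_X$ (writing $\operatorname{STr}(N(A_{-n+\frac{j}{2}}\otimes\operatorname{Id}_E))=\operatorname{rk}(E)\operatorname{STr}(NA_{-n+\frac{j}{2}})$), while the odd-index coefficients come entirely from the strata contributions (ii) and hence vanish in the limit.

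The one genuinely new bookkeeping point compared to Theorem~\ref{t-gue160423} is that here we must track the supertrace with the number operator $N$ rather than just $\operatorname{Id}$; but $N$ is a fixed bundle endomorphism of $T^{*0,\bullet}X$ commuting with nothing in particular yet acting fiberwise boundedly, so pairing $\operatorname{STr}(N\cdot)$ through all the estimates of Theorems~\ref{t-gue160416b}, \ref{t-gue160417} and \ref{t-gue160303} changes none of the uniform bounds: one simply replaces $|\hat{\mathcal A}^{B_j}_{m,\ell}|$ by $|\operatorname{STr}(N\hat{\mathcal A}^{B_j}_{m,\ell})|\leq n\,d\,|\hat{\mathcal A}^{B_j}_{m,\ell}|$ throughout. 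The uniform-in-$m$ control of all coefficients $B_{m,-n+j}$ follows from the uniform bounds $|A_{m,\ell}(t,x)|\leq C_\ell$ in Theorem~\ref{t-gue160303} together with the uniform bounds on the $b_{m,j,s}$ and $d_{m,j}$ in Theorems~\ref{t-gue160416b} and \ref{t-gue160417}.

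The main obstacle I anticipate is purely organizational rather than conceptual: one must ensure that the finitely many localizations (the BRT cover $\set{D_j}$, the subordinate partition $\set{\chi_j}$, and the further neighborhoods $\Omega$ around points $x_0$ where exceptional angles occur) can all be chosen with the angular constant $\zeta$ satisfying \eqref{e-gue160327} simultaneously, and that the error terms $O((t/m)^\infty)$ in \eqref{e-gue160416fII} and the stratum expansions of Theorem~\ref{t-gue160416b} patch together into a single global expansion \eqref{e-gue160427} whose coefficients are independent of all these choices. This is handled exactly as in the proof of Theorem~\ref{t-gue160423}: compactness of $X$ gives a finite subcover, the $O((t/m)^\infty)$ and $e^{-\varepsilon_0 m d(x,X_{\rm sing})^2/t}$ errors are negligible after integration, and the coefficients are pinned down by their small-$t$ asymptotics, which are intrinsic. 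Hence the theorem follows by "repeating the proof of Theorem~\ref{t-gue160423}" with $\operatorname{STr}(N\cdot)$ inserted.
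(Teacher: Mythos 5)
Your proposal is correct and follows exactly the approach the paper intends: the paper's own proof of Theorem~\ref{t-gue160427} is literally the one-sentence remark that one repeats the proof of Theorem~\ref{t-gue160423}, and your proposal spells out precisely what this entails (inserting $\operatorname{STr}(N\,\cdot\,)$ through the localization, the splitting of the $u$-integral into the three regions, Theorems~\ref{t-gue160416b} and~\ref{t-gue160417} for the strata contributions, and the identification of the limiting coefficients via \eqref{e-gue160128bb} and \eqref{e-gue160131}). You also correctly read the right-hand side of \eqref{e-gue160427I} as $\operatorname{rk}(E)\int_X\operatorname{STr}\bigl(NA_{-n+\frac{j}{2}}(x)\bigr)\,dv_X$, consistent with the way it is used in \eqref{E:5.5.46} and \eqref{E:5.5.43}.
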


\subsection{Asymptotics of the analytic torsion}

For $x \in X, t > 0$, we set
\begin{equation}\label{E:5.5.37}
\mathcal{R}_t(x) = \det \left( \frac{\dot{\mathcal{R}}}{2 \pi} \right) \operatorname{Tr} \Big[(\operatorname{Id}-\exp (t \dot{\mathcal{R}}))^{-1}\Big].
\end{equation}

Let $\omega^1(x),\ldots,\omega^n(x)\in C^\infty(X,T^{*1,0}X)$ be an orthonormal basis for $T^{*1,0}_xX$, for every $x\in X$. Define $\Theta(x):=i\sum^n_{j=1}\omega^j(x)\wedge\ol{\omega^j}(x)\in C^\infty(X,T^{*1,1}X)$. Then we have
\begin{equation}\label{E:5.5.38}
dv_X =\frac{1}{n!}\Theta^n \wedge(-\omega_0), \quad \det \left( \frac{\dot{\mathcal{R}}}{2\pi} \right)dv_X(x) = \frac{1}{n!} \left(-\frac{1}{2\pi} d\omega_0 \right)^n \wedge(-\omega_0).
\end{equation}
By \eqref{E:5.5.37} and \eqref{E:5.5.38}, for any $k \in \mathbb{N}$, we have the following asymptotic expansion, for $t \to 0$, 
\begin{equation}\label{E:5.5.39}
\mathcal{R}_t(x) = \sum_{\ell=-n}^k \widehat{A}_\ell(x) t^l + O(t^{k+1}),
\end{equation} 
where 
\begin{equation}\label{E:5.5.40}
\begin{split}
& \widehat{A}_\ell = 0  \ \   \text{for} \  \   l \le -2, \quad \widehat{A}_{-1} dv_X = -\frac{1}{(n-1)!} \frac{\Theta}{2\pi} \wedge \left(-\frac{1}{2\pi}d\omega_0 \right)^{n-1}\wedge(-\omega_0), \\
& \widehat{A}_0 \ dv_X = \frac{n}{2}\frac{1}{ n!} \left(-\frac{1}{2\pi}d\omega_0 \right)^n \wedge(-\omega_0).
\end{split} 
\end{equation}

We need

\begin{lemma}\label{l-gue160428}
With the notations above, we have
\begin{equation}\label{E:5.5.45}
(2\pi)^{-n} \frac{\det(\dot{\mathcal{R}})\operatorname{STr} Ne^{t\gamma_d}}{\det(1-\exp(-t\dot{\mathcal{R}}))}(x) = \mathcal{R}_t(x),\ \ \forall x\in X.
\end{equation}
\end{lemma}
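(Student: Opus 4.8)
The plan is to reduce the identity to a purely linear-algebraic computation at a fixed point $x\in X$, carried out in an eigenbasis of $\dot{\mathcal{R}}(x)$. First I would diagonalize: choose the orthonormal frame $\{\omega_j\}_{j=1}^n$ of $T^{1,0}_xX$ so that $\dot{\mathcal{R}}(x)=\operatorname{diag}(a_1,\ldots,a_n)$ as in \eqref{e-gue160127b}. Then by \eqref{E:1.5.19} we have $\gamma_d(x)=-\sum_{j=1}^n a_j\,\overline\omega^j\wedge\iota_{\overline\omega_j}$, so the operators $N_j:=\overline\omega^j\wedge\iota_{\overline\omega_j}$ commute, each has eigenvalues $0$ and $1$, and on the exterior algebra $T^{*0,\bullet}_xX=\bigotimes_{j=1}^n(\mathbb C\oplus\mathbb C\,\overline\omega^j)$ the operator $e^{t\gamma_d}$ acts as $\bigotimes_j\operatorname{diag}(1,e^{-ta_j})$ while $N=\sum_j N_j$ acts as $\sum_j\operatorname{diag}(0,1)$ in the $j$-th slot.

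Next I would compute the supertrace of $Ne^{t\gamma_d}$ factorwise. Since $\operatorname{STr}$ on a tensor product of $\mathbb Z_2$-graded spaces is the product of the supertraces, and $N$ is a sum of terms each acting in a single factor, a standard Leibniz-type identity gives
\[
\operatorname{STr}\bigl(Ne^{t\gamma_d}\bigr)(x)=\sum_{k=1}^n\Bigl(\operatorname{str}_k\bigl(N_k e^{-ta_k N_k}\bigr)\Bigr)\prod_{j\ne k}\operatorname{str}_j\bigl(e^{-ta_j N_j}\bigr),
\]
where $\operatorname{str}_j$ is the supertrace on the two-dimensional $j$-th factor. One has $\operatorname{str}_j(e^{-ta_jN_j})=1-e^{-ta_j}$ and $\operatorname{str}_k(N_ke^{-ta_kN_k})=-e^{-ta_k}$. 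Hence
\[
\operatorname{STr}\bigl(Ne^{t\gamma_d}\bigr)(x)=-\sum_{k=1}^n e^{-ta_k}\prod_{j\ne k}(1-e^{-ta_j})
=\prod_{j=1}^n(1-e^{-ta_j})\cdot\Bigl(-\sum_{k=1}^n\frac{e^{-ta_k}}{1-e^{-ta_k}}\Bigr).
\]
Multiplying by $(2\pi)^{-n}\det(\dot{\mathcal{R}})(x)\big/\!\det(1-e^{-t\dot{\mathcal{R}}})(x)=(2\pi)^{-n}a_1\cdots a_n\big/\prod_j(1-e^{-ta_j})$ cancels the product $\prod_j(1-e^{-ta_j})$, leaving
\[
(2\pi)^{-n}\frac{\det(\dot{\mathcal{R}})\operatorname{STr}Ne^{t\gamma_d}}{\det(1-e^{-t\dot{\mathcal{R}}})}(x)
=\det\Bigl(\frac{\dot{\mathcal{R}}}{2\pi}\Bigr)(x)\cdot\Bigl(-\sum_{k=1}^n\frac{e^{-ta_k}}{1-e^{-ta_k}}\Bigr).
\]

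Finally I would identify the right-hand side with $\mathcal R_t(x)$ of \eqref{E:5.5.37}. Since $-\dfrac{e^{-ta_k}}{1-e^{-ta_k}}=-\dfrac{1}{e^{ta_k}-1}=\dfrac{1}{1-e^{ta_k}}$, the sum $-\sum_k\dfrac{e^{-ta_k}}{1-e^{-ta_k}}$ equals $\sum_k\dfrac{1}{1-e^{ta_k}}=\operatorname{Tr}\bigl[(\operatorname{Id}-\exp(t\dot{\mathcal R}))^{-1}\bigr](x)$, which is exactly the trace factor in \eqref{E:5.5.37}. This proves \eqref{E:5.5.45}. I expect the only real care needed is bookkeeping the signs and the convention (stated after Theorem~\ref{T:1.6.1}) that a vanishing eigenvalue $a_k=0$ contributes $1/t$ to $\det(\dot{\mathcal R})/\!\det(1-e^{-t\dot{\mathcal R}})$: in that limiting case $a_k/(1-e^{-ta_k})\to 1/t$ and $1/(1-e^{ta_k})\to$ a simple pole, and one checks both sides degenerate consistently by continuity in $a_k$, so the identity persists on all of $X$.
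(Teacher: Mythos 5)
Your proposal is correct, and it reaches the identity by the same overall strategy as the paper: fix $x$, diagonalize $\dot{\mathcal{R}}(x)$ in an orthonormal frame, and compute both sides explicitly. The two arguments differ only in how the supertrace $\operatorname{STr}(Ne^{t\gamma_d})$ is evaluated. The paper expands $e^{t\gamma_d}$ as a double sum over degrees $q$ and multi-indices $J=(j_1,\dots,j_q)$ and then proves the combinatorial identity
\[
e^{-(\mu_1+\cdots+\mu_n)t}\sum^n_{j=1}\prod_{i\ne j}\bigl(1-e^{\mu_i t}\bigr)
=\sum^n_{q=0}(-1)^{n+q}q\sum_{J}e^{(-\mu_{j_1}-\cdots-\mu_{j_q})t},
\]
which is what allows the cancellation against $\det(1-e^{-t\dot{\mathcal R}})$. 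You instead exploit the $\mathbb Z_2$-graded tensor decomposition $T^{*0,\bullet}_xX\cong\bigotimes_j(\Complex\oplus\Complex\,\ol\omega^j)$, multiplicativity of the supertrace over graded tensor factors, and the elementary factor-wise supertraces $\operatorname{str}_j(e^{-ta_jN_j})=1-e^{-ta_j}$ and $\operatorname{str}_k(N_ke^{-ta_kN_k})=-e^{-ta_k}$, together with the Leibniz-type splitting of $N=\sum_kN_k$. This reaches the same expression $\prod_j(1-e^{-ta_j})\cdot\bigl(-\sum_k e^{-ta_k}/(1-e^{-ta_k})\bigr)$ without subset bookkeeping, so it is a bit slicker; it costs you only the (standard) fact that $\operatorname{str}$ is multiplicative on graded tensor products. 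One small remark: the caveat about a vanishing eigenvalue $a_k(x)=0$ is harmless but moot here, since Section 5 works under the standing assumption that $X$ is strongly pseudoconvex, so $\dot{\mathcal R}(x)$ is positive definite and all $a_k(x)>0$.
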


\begin{proof}
Fix $p\in X$, let $\{ \omega_j(x) \}_{j=1}^n$ to be an orthonormal basis of $T^{1,0}_xX$ such that
\begin{equation}\label{e-gue160428}
\dot{\mathcal{R}}(p) = \operatorname{diag} (\mu_1, \cdots, \mu_n) \in \operatorname{End}(T_p^{1,0}X),
\end{equation}\
then
\begin{equation}\label{e-gue160428I}
\begin{split}
&\gamma_d(p) = -\sum^n_{j=1} \mu_j \overline{\omega}^j(p) \wedge \iota_{\overline{\omega}_j}(p),\\
&{\rm det\,}\dot{\mathcal{R}}(p):=\mu_1\cdots\mu_n.
\end{split}
\end{equation}
From \eqref{e-gue160428} and \eqref{e-gue160428I}, it is easy to check that 
\begin{equation}\label{e-gue160428II}
\mathcal{R}_t(p) =\frac{\mu_1\cdots\mu_n}{(2\pi)^n}\Bigr(\frac{1}{1-e^{\mu_1 t}}+\cdots+\frac{1}{1-e^{\mu_n t}}\Bigr)
\end{equation}
and 
\begin{equation}\label{e-gue160428III}
\begin{split}
&(2\pi)^{-n} \frac{\det(\dot{\mathcal{R}})\operatorname{STr} Ne^{t\gamma_d}}{\det(1-\exp(-t\dot{\mathcal{R}}))}(p)\\
&=\frac{\mu_1\cdots\mu_n}{(2\pi)^n}\frac{1}{(1-e^{-\mu_1 t})\cdots(1-e^{-\mu_n t})}\sum^n_{q=0}(-1)^qq\sum_{J=(j_1,\ldots,j_q),1\leq j_1<j_2<\cdots<j_q\leq n}e^{(-\mu_{j_1}-\cdots-\mu_{j_q})t}\\
&=\frac{\mu_1\cdots\mu_n}{(2\pi)^n}\frac{e^{(\mu_1+\cdots+\mu_n)t}}{(1-e^{\mu_1 t})\cdots(1-e^{\mu_n t})}\sum^n_{q=0}(-1)^{n+q}q\sum_{J=(j_1,\ldots,j_q),1\leq j_1<j_2<\cdots<j_q\leq n}e^{(-\mu_{j_1}-\cdots-\mu_{j_q})t}.
\end{split}
\end{equation}
Now,
\begin{equation}\label{e-gue160428IV}
\begin{split}
&e^{-(\mu_1+\cdots+\mu_n)t}\sum^n_{j=1}(1-e^{\mu_1 t})\cdots(1-e^{\mu_{j-1} t})(1-e^{\mu_{j+1} t})\cdots(1-e^{\mu_n t})\\
&=e^{-(\mu_1+\cdots+\mu_n)t}\sum^n_{j=1}\sum^n_{q=0}(-1)^q\sum_{J=(j_1,\ldots,j_q),1\leq j_1<\cdots<j_q\leq n, j\notin J}e^{(\mu_{j_1}+\cdots+\mu_{j_q})t}\\
&=\sum^n_{j=1}\sum^n_{q=0}(-1)^{n+q}\sum_{J=(j_1,\ldots,j_q),1\leq j_1<\cdots<j_q\leq n, j\in J}e^{(-\mu_{j_1}-\cdots-\mu_{j_q})t}\\
&=\sum^n_{q=0}(-1)^{n+q}q\sum_{J=(j_1,\ldots,j_q),1\leq j_1<j_2<\cdots<j_q\leq n}e^{(-\mu_{j_1}-\cdots-\mu_{j_q})t}.
\end{split}
\end{equation}
From \eqref{e-gue160428IV}, \eqref{e-gue160428III} and \eqref{e-gue160428II}, the lemma follows. 
\end{proof}

\begin{theorem}\label{T:5.5.10}
With the notations used in Theorem~\ref{t-gue160427}, we have 
\begin{equation}\label{E:5.5.43}
\lim_{m\To\infty}B_{m,-n+\frac{j}{2}}=\frac{1}{2\pi}\operatorname{rk}(E)\int_X\widehat{A}_{-n+\frac{j}{2}}(x),\quad \text{for all $j\geq 0$, $j$ is even}
\end{equation}
where $\widehat{A}_{\ell}$ is as in \eqref{E:5.5.39}. 
\end{theorem}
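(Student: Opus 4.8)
The plan is to obtain this as an essentially formal consequence of Theorem~\ref{t-gue160427} and Lemma~\ref{l-gue160428}. Theorem~\ref{t-gue160427} already tells us that for $j$ even,
\[
\lim_{m\To\infty}B_{m,-n+\frac{j}{2}}=\operatorname{rk}(E)\int_X\operatorname{STr}\bigl(NA_{-n+\frac{j}{2}}(x)\bigr)\,dv_X(x),
\]
where $A_\ell\in C^\infty(X,\operatorname{End}(T^{*0,\bullet}X))$ are the coefficients in \eqref{E:5.5.34}; here the factor $\operatorname{rk}(E)$ comes from the bookkeeping $\operatorname{STr}_{T^{*0,\bullet}X\otimes E}\bigl((NA_\ell)\otimes\operatorname{Id}_{E}\bigr)=\operatorname{rk}(E)\,\operatorname{STr}_{T^{*0,\bullet}X}(NA_\ell)$, which is available because $A_{m,\ell}(x)=A_\ell(x)\otimes\operatorname{Id}_{E_x}+O(m^{-1/2})$ by Theorem~\ref{t-gue160303}. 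So the theorem reduces to the pointwise identity
\[
\operatorname{STr}\bigl(NA_\ell(x)\bigr)=\frac{1}{2\pi}\widehat{A}_\ell(x),\qquad\forall x\in X,\ \ \ell=-n,-n+1,\ldots.
\]

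To prove this identity I would apply $\operatorname{STr}(N\,\cdot\,)$ (as an operation on $\operatorname{End}(T^{*0,\bullet}_xX)$) to both sides of \eqref{E:5.5.34}. Since $\dfrac{\det(\dot{\mathcal{R}})}{\det(1-\exp(-t\dot{\mathcal{R}}))}(x)$ is a scalar it pulls out of the supertrace, and using Lemma~\ref{l-gue160428} one gets
\[
(2\pi)^{-n-1}\frac{\det(\dot{\mathcal{R}})\,\operatorname{STr}\bigl(Ne^{t\gamma_d}\bigr)}{\det(1-\exp(-t\dot{\mathcal{R}}))}(x)=\frac{1}{2\pi}\,\mathcal{R}_t(x)=\sum_{\ell=-n}^{k}\operatorname{STr}\bigl(NA_\ell(x)\bigr)t^\ell+O(t^{k+1}).
\]
Comparing with the expansion \eqref{E:5.5.39} of $\mathcal{R}_t(x)$ and invoking uniqueness of the coefficients of an asymptotic (indeed Laurent) expansion in $t$ — valid since, $X$ being strongly pseudoconvex, $\dot{\mathcal{R}}(x)$ is positive definite, so there is no zero-eigenvalue convention to worry about and the left side is genuinely meromorphic in $t$ with a pole of order $\le n$ at $t=0$ — yields $\operatorname{STr}(NA_\ell(x))=\tfrac{1}{2\pi}\widehat{A}_\ell(x)$ for every $\ell$, as continuous functions on $X$.

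Combining the two displays and integrating over $X$ then gives, for $j$ even,
\[
\lim_{m\To\infty}B_{m,-n+\frac{j}{2}}=\operatorname{rk}(E)\int_X\frac{1}{2\pi}\widehat{A}_{-n+\frac{j}{2}}(x)\,dv_X(x)=\frac{1}{2\pi}\operatorname{rk}(E)\int_X\widehat{A}_{-n+\frac{j}{2}}(x),
\]
which is \eqref{E:5.5.43}. I do not expect a genuine obstacle here: all the analytic content (the uniform small-$t$ expansion and the computation of the $m\to\infty$ limits of its coefficients, and the algebraic supertrace identity for $Ne^{t\gamma_d}$) is already contained in Theorem~\ref{t-gue160427} and Lemma~\ref{l-gue160428}. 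The only points requiring a little care are the commutation of the supertrace with the scalar curvature factor, the matching of expansion coefficients, and keeping the $\operatorname{rk}(E)$ factor straight when passing between $\operatorname{End}(T^{*0,\bullet}X)$ and $\operatorname{End}(T^{*0,\bullet}X\otimes E)$.
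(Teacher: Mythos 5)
Your proof matches the paper's almost verbatim: both derive the pointwise identity $\operatorname{STr}(NA_\ell(x))=\tfrac{1}{2\pi}\widehat{A}_\ell(x)$ by applying $\operatorname{STr}(N\,\cdot\,)$ to \eqref{E:5.5.34}, invoking Lemma~\ref{l-gue160428} to identify the result with $\tfrac{1}{2\pi}\mathcal{R}_t(x)$ whose expansion is \eqref{E:5.5.39}, and then combine with \eqref{e-gue160427I}. The extra care you spell out about pulling the scalar curvature factor through the supertrace, the uniqueness of the Laurent coefficients under strong pseudoconvexity, and the $\operatorname{rk}(E)$ bookkeeping is exactly the content the paper's terse two-line proof takes for granted.
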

\begin{proof}
By \eqref{E:5.5.34} and Lemma~\ref{l-gue160428}, we get
\begin{equation}\label{E:5.5.46}
\operatorname{STr} NA_\ell(x) = \frac{1}{2\pi}\widehat{A}_\ell(x),\ \ \ell=-n,-n+1,\ldots.
\end{equation}
From \eqref{E:5.5.46} and \eqref{e-gue160427I}, the theorem follows. 
\end{proof}

\begin{theorem}\label{T:5.5.11}
There exist $C, c, c'>0$ such that for any $q \ge 1, t \ge 1, m \in \mathbb{N}$, we have
\begin{equation}\label{E:5.5.48}
m^{-n} \operatorname{Tr}^{(q)} \Big[ e^{-\frac{t}{m}\Box_{b,m}}  \Big] \le C \exp\left( -(c-c'/m)t \right).
\end{equation}
\end{theorem}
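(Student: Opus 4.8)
The plan is to combine the spectral gap of Theorem~\ref{T:1.5.5} with the uniform‑in‑$m$ short‑time control of the heat kernel diagonal from Theorem~\ref{t-gue160303}, through the elementary semigroup inequality. Let $\mu^{(q)}_m=\lambda_1\le\lambda_2\le\cdots$ denote the eigenvalues of $\Box^{(q)}_{b,m}$ listed with multiplicity, so that $\operatorname{Tr}^{(q)}\big[e^{-s\Box_{b,m}}\big]=\sum_j e^{-s\lambda_j}$. For any $s\ge s_0>0$, since $\lambda_j\ge\mu^{(q)}_m$ and $s-s_0\ge0$ we have $e^{-s\lambda_j}=e^{-(s-s_0)\lambda_j}e^{-s_0\lambda_j}\le e^{-(s-s_0)\mu^{(q)}_m}e^{-s_0\lambda_j}$, hence
\[
\operatorname{Tr}^{(q)}\big[e^{-s\Box_{b,m}}\big]\le e^{-(s-s_0)\mu^{(q)}_m}\,\operatorname{Tr}^{(q)}\big[e^{-s_0\Box_{b,m}}\big];
\]
note that this step uses only $\lambda_j\ge\mu^{(q)}_m$, not positivity of $\mu^{(q)}_m$.

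First I would fix the reference time $s_0=1/m$ and check that $m^{-n}\operatorname{Tr}^{(q)}\big[e^{-\frac{1}{m}\Box_{b,m}}\big]$ is bounded by a constant $C_0$ independent of $m$. This is immediate from Theorem~\ref{t-gue160303}: putting $t=1$ and $k=1$ in \eqref{e-gue160130q} and using $\abs{A_{m,\ell}(1,x)}\le C_\ell$ with $C_\ell$ independent of $m$, one gets $\big|m^{-n}e^{-\frac{1}{m}\Box_{b,m}}(x,x)\big|\le C'$ uniformly in $x\in X$ and $m\in\mathbb N$; applying $\operatorname{Tr}^{(q)}$ on the diagonal (which costs only the fibre dimension of $T^{*0,q}X\otimes E$) and integrating over $X$ yields $m^{-n}\operatorname{Tr}^{(q)}\big[e^{-\frac{1}{m}\Box_{b,m}}\big]\le C_0$.

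Next, for $t\ge1$ I would set $s=t/m$, so that $s-s_0=(t-1)/m\ge0$, and feed this into the semigroup inequality together with the previous bound:
\[
m^{-n}\operatorname{Tr}^{(q)}\big[e^{-\frac{t}{m}\Box_{b,m}}\big]\le C_0\,e^{-\mu^{(q)}_m(t-1)/m}.
\]
Finally I would insert the spectral gap $\mu^{(q)}_m\ge c_1m-c_2$ from Theorem~\ref{T:1.5.5}. Using $0\le t-1\le t$ and $c_2>0$,
\[
-\mu^{(q)}_m\frac{t-1}{m}\le (c_2-c_1m)\frac{t-1}{m}=\frac{c_2(t-1)}{m}-c_1(t-1)\le c_1-\Big(c_1-\frac{c_2}{m}\Big)t,
\]
whence $m^{-n}\operatorname{Tr}^{(q)}\big[e^{-\frac{t}{m}\Box_{b,m}}\big]\le C_0e^{c_1}e^{-(c_1-c_2/m)t}$, which is the assertion with $C=C_0e^{c_1}$, $c=c_1$, $c'=c_2$.

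I do not expect a genuine obstacle here; the single point that requires care is that the bound $m^{-n}\operatorname{Tr}^{(q)}\big[e^{-\frac{1}{m}\Box_{b,m}}\big]\le C_0$ be truly uniform in $m$. This is precisely why the statement is phrased at the rescaled time $t/m$ rather than $t$: it is the uniformity in $m$ built into Theorem~\ref{t-gue160303} that makes $C_0$ independent of $m$, and then the $c_1m-c_2$ gap is exactly what converts the fixed reference time into exponential decay in $t$ with the stated error term $c'/m$.
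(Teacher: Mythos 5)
Your proposal is correct and follows essentially the same route as the paper: factor the heat trace via the semigroup identity at reference time $1/m$, bound $m^{-n}\operatorname{Tr}^{(q)}\big[e^{-\frac{1}{m}\Box_{b,m}}\big]$ uniformly in $m$, and feed in the spectral gap $\mu^{(q)}_m\ge c_1m-c_2$ from Theorem~\ref{T:1.5.5}. The only (cosmetic) difference is that you invoke the uniform expansion of Theorem~\ref{t-gue160303} to get the reference-time bound, whereas the paper cites \eqref{E:1.6.4}; both yield the same uniform constant.
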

\begin{proof}
By Theorem \ref{T:1.5.5}, for $t \ge 1, q \ge 1$, we have 
\begin{equation}\label{E:5.5.49}
\operatorname{Tr}^{(q)} \Big[ e^{-\frac{t}{m}\Box_{b,m}} \Big] \le \operatorname{Tr}^{(q)} \Big[e^{-\frac{1}{m}\Box_{b,m}} \Big] \exp \left( -\frac{(t-1)}{m}(c_1m-c_2) \right).
\end{equation} 
By \eqref{E:1.6.4}, we know that $m^{-n}\operatorname{Tr}^{(q)} \Big[e^{-\frac{1}{m}\Box_{b,m}}\Big]$ has a finite limit as $m \to+\infty$. By \eqref{E:5.5.49}, we have \eqref{E:5.5.48}.
\end{proof}

The main result of this work is the following

\begin{theorem}\label{T:5.5.8}
As $m \to+\infty$, we have
\begin{equation}\label{E:5.5.33}
\theta_{b,m}'(0) =  \frac{\operatorname{rk}(E)}{4\pi}
 \int_X \log \left( \det \left(  \frac{m \dot{\mathcal{R}} }{2\pi} \right) \right) e^{-m\frac{d\omega_0}{2\pi}} \wedge (-\omega_0) + o(m^{n}),
\end{equation}
where $\dot{\mathcal{R}} \in \operatorname{End}(T^{1,0}X)$ is defined in \eqref{E:1.5.15}.
\end{theorem}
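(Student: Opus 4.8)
\emph{Proof strategy.} The plan is to adapt the argument of Bismut--Vasserot (cf. \cite[Theorem 5.5.9]{MM}) to the transversal $S^1$-setting, the genuinely new ingredient being the analysis near the singular strata $X_{{\rm sing\,}}$. First I would rescale time: substituting $t\mapsto t/m$ in \eqref{E:5.5.12} gives $\theta_{b,m}(z)=m^{-z}\widetilde\theta_{b,m}(z)$ where $\widetilde\theta_{b,m}(z):=-M[\operatorname{STr}[Ne^{-\frac{t}{m}\Box_{b,m}}\Pi^\perp_m]](z)$, hence
\[\theta_{b,m}'(0)=\widetilde\theta_{b,m}'(0)-(\log m)\,\widetilde\theta_{b,m}(0).\]
By the spectral gap estimate Theorem~\ref{T:1.5.5}, for $m$ large $\Ker\Box^{(q)}_{b,m}=\{0\}$ for every $q\geq1$; since $N$ annihilates $(0,0)$-forms this means $\operatorname{STr}[N\Pi_m]=0$ and $\Pi^\perp_m$ may be dropped wherever $N$ occurs. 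In particular $\widetilde\theta_{b,m}(0)$ is (minus) the $t^0$-coefficient in the expansion of Theorem~\ref{t-gue160427}, i.e. $\widetilde\theta_{b,m}(0)=-m^nB_{m,0}$, and by Theorem~\ref{T:5.5.10} together with \eqref{E:5.5.40} one has $\lim_{m\to\infty}B_{m,0}=\frac{\operatorname{rk}(E)}{2\pi}\int_X\widehat A_0=\frac{n\operatorname{rk}(E)}{4\pi}\int_X\det\big(\frac{\dot{\mathcal{R}}}{2\pi}\big)dv_X$. This term will produce the $n\log m$-part of \eqref{E:5.5.33}.

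Next I would treat $\widetilde\theta_{b,m}'(0)$ via the Mellin-derivative formula \eqref{e-gue160421s} (with $k=n$) applied to $g_m(t)=\operatorname{STr}[Ne^{-\frac{t}{m}\Box_{b,m}}]$. After dividing by $m^n$, the $\int_0^1$-piece is controlled by the uniform-in-$m$ small-time expansion of Theorem~\ref{t-gue160427}, and the $\int_1^\infty$-piece by Theorem~\ref{T:5.5.11} (which rests on Theorem~\ref{T:1.5.5}), giving $m^{-n}|g_m(t)|\leq Ce^{-ct/2}$ for $t\geq1$ and $m$ large. Moreover, integrating Theorem~\ref{T:1.6.1} over $X$ and using Lemma~\ref{l-gue160428}, for each fixed $t>0$ one obtains $m^{-n}g_m(t)\to \frac{\operatorname{rk}(E)}{2\pi}\int_X\mathcal{R}_t(x)\,dv_X$ as $m\to\infty$, once one checks that the error $O\big(m^n e^{-\varepsilon_0 m d(x,X_{{\rm sing\,}})^2}\big)$ contributes only $o(m^n)$ after integration over $X$ (its integrand tends to $0$ a.e. on $X_{{\rm reg\,}}$ and is bounded). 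Dominated convergence in the two $t$-integrals, together with the convergence of all coefficients $B_{m,-n+\frac{j}{2}}$ (Theorems~\ref{t-gue160427},~\ref{T:5.5.10}), then yields $m^{-n}\widetilde\theta_{b,m}'(0)\to -\frac{\partial}{\partial z}\big|_{z=0}M\big[\frac{\operatorname{rk}(E)}{2\pi}\int_X\mathcal{R}_{\,\cdot}\,dv_X\big]$.

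The limiting Mellin transform is computed pointwise. Diagonalizing $\dot{\mathcal{R}}(x)=\operatorname{diag}(\mu_1,\dots,\mu_n)$, $\mu_j>0$ by strong pseudoconvexity, one has $\mathcal{R}_t(x)=\det\big(\frac{\dot{\mathcal{R}}(x)}{2\pi}\big)\sum_{j=1}^n(1-e^{\mu_j t})^{-1}$; since $(1-e^{\mu t})^{-1}=-\sum_{k\geq1}e^{-k\mu t}$ this gives $M[(1-e^{\mu\cdot})^{-1}](z)=-\mu^{-z}\zeta(z)$, so using $\zeta(0)=-\tfrac12$, $\zeta'(0)=-\tfrac12\log(2\pi)$ one gets $\frac{\partial}{\partial z}\big|_0 M[\mathcal{R}_{\,\cdot}(x)]=-\tfrac12\det\big(\frac{\dot{\mathcal{R}}(x)}{2\pi}\big)\log\det\big(\frac{\dot{\mathcal{R}}(x)}{2\pi}\big)$, and hence $m^{-n}\widetilde\theta_{b,m}'(0)\to \frac{\operatorname{rk}(E)}{4\pi}\int_X\log\det\big(\frac{\dot{\mathcal{R}}}{2\pi}\big)\det\big(\frac{\dot{\mathcal{R}}}{2\pi}\big)dv_X$. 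Finally I would assemble: by \eqref{E:5.5.38} the top-degree part of $e^{-m\frac{d\omega_0}{2\pi}}\wedge(-\omega_0)$ is $\frac{m^n}{n!}\big(-\frac{d\omega_0}{2\pi}\big)^n\wedge(-\omega_0)=m^n\det\big(\frac{\dot{\mathcal{R}}}{2\pi}\big)dv_X$, and $\log\det\big(\frac{m\dot{\mathcal{R}}}{2\pi}\big)=n\log m+\log\det\big(\frac{\dot{\mathcal{R}}}{2\pi}\big)$; thus the right-hand side of \eqref{E:5.5.33} splits into $\frac{n\operatorname{rk}(E)}{4\pi}(\log m)\,m^n\int_X\det\big(\frac{\dot{\mathcal{R}}}{2\pi}\big)dv_X$, matching $-(\log m)\widetilde\theta_{b,m}(0)$, plus $\frac{\operatorname{rk}(E)}{4\pi}\,m^n\int_X\log\det\big(\frac{\dot{\mathcal{R}}}{2\pi}\big)\det\big(\frac{\dot{\mathcal{R}}}{2\pi}\big)dv_X$, matching $\widetilde\theta_{b,m}'(0)+o(m^n)$, which gives \eqref{E:5.5.33}.

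The hard part will be the uniform-in-$m$ estimates that license interchanging $\lim_{m\to\infty}$ with the Mellin integrations, and in particular the quantitative control of the singular-strata contributions $e^{-\varepsilon_0 m d(x,X_{{\rm sing\,}})^2}$ appearing in Theorems~\ref{T:1.6.1},~\ref{t-gue160427} and~\ref{t-gue160423}, together with the attendant half-integer powers of $t$ in the small-time expansion: one must check that these disturb neither the $\log m$-order coefficient (the heat coefficient $\widehat B_{m,0}=m^nB_{m,0}$ must approach its leading value with an error that is $o(m^n/\log m)$, which is where the strata half-integer terms have to be quantified) nor the order-$m^n$ term. This strata analysis has no counterpart in the line-bundle case of Bismut--Vasserot and is the genuinely new point; everything else is a faithful transcription of their argument via the results proved above.
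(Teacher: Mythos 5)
Your proposal follows essentially the same route as the paper: rescale $t\mapsto t/m$ to get $\theta'_{b,m}(0)=-(\log m)\widetilde\theta_{b,m}(0)+\widetilde\theta'_{b,m}(0)$, use the spectral gap (Theorem~\ref{T:1.5.5}) to drop $\Pi^\perp_m$, invoke the uniform heat expansion (Theorems~\ref{T:1.6.1},~\ref{t-gue160427},~\ref{T:5.5.10},~\ref{T:5.5.11}) together with dominated convergence to pass to the limiting Mellin transform of $\mathcal{R}_t$, and evaluate via the Riemann $\zeta$-function. The point you flag at the end — that the $O(m^{-1/2})$ convergence of the heat coefficients (from \eqref{e-gue160128b} and Theorem~\ref{t-gue160423}) is what makes the $(\log m)\cdot\text{error}$ term $o(m^n)$ — is exactly what licenses the final assembly, and the paper relies on it in the same (implicit) way.
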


\begin{proof}
For $m > \frac{2c_2}{c_1}$ in Theorem \ref{T:1.5.5}, set
\begin{equation}\label{E:5.5.50}
\widetilde{\theta}_{b,m}(z) = -M \left\lbrack m^{-n} \operatorname{STr} \Big\lbrack Ne^{-\frac{t}{m}\Box_{b,m}}\Pi_m^\perp \Big\rbrack \right\rbrack(z).
\end{equation}
Clearly 
\begin{equation}\label{E:5.5.51}
m^{-n} \theta_{b,m}(z) = m^{-z} \widetilde{\theta}_{b,m}(z).
\end{equation}
By Theorem \ref{T:1.5.5}, Theorem \ref{t-gue160313}, Theorem \ref{t-gue160427}, Theorem~\ref{T:5.5.10}, \eqref{E:5.5.40} and \eqref{E:5.5.51}, for $m > \frac{2c_2}{c_1}$, we have
\begin{equation}\label{E:5.5.52}
\begin{split}
& m^{-n} \theta'_{b,m}(0)  = - \log(m) \widetilde{\theta}_{b,m}(0) + \widetilde{\theta}'_{b,m}(0),   \\
& \lim_{m \to \infty} \widetilde{\theta}_{b,m}(0) =-\frac{1}{2\pi}\operatorname{rk}(E)\int_X\widehat{A}_0dv_X=-\frac{1}{2\pi}\operatorname{rk}(E)\frac{n}{2}\frac{1}{n!}\int_X(-\frac{1}{2\pi}d\omega_0)^n\wedge(-\omega_0),\\
&\operatorname{STr} \lbrack N\Pi_m \rbrack=0.
\end{split}
\end{equation} 
By Theorem~\ref{t-gue160427}, Theorem~\ref{T:5.5.10}, \eqref{E:5.5.13} and Lebesgue's dominated convergence theorem, we get
\begin{equation}\label{E:5.5.53}
\begin{split}
 \lim_{m \to \infty} \widetilde{\theta}'_{b,m}(0) = & - \lim_{m \to \infty} \int_0^1 \left\{  m^{-n} \operatorname{STr} \left\lbrack Ne^{-\frac{t}{m}\Box_{b,m}}\right\rbrack - \sum^{2n}_{j=0} B_{m,-n+\frac{j}{2}}t^{-n+\frac{j}{2}}\right\} \frac{dt}{t} \\
& -\lim_{m \to \infty}  \int_1^\infty \left\{  m^{-n} \operatorname{STr} \left\lbrack Ne^{-\frac{t}{m}\Box_{b,m}} \right\rbrack \right\} \frac{dt}{t}  \\
&-\lim_{m\to\infty}\sum^{2n-1}_{j=0}B_{m,-n+\frac{j}{2}}\frac{1}{\frac{j}{2}-n}+\Gamma'(1)\lim_{m\to\infty}B_{m,0}\\
&= - \int_0^1 \lim_{m \to \infty}\left\{  m^{-n} \operatorname{STr} \left\lbrack Ne^{-\frac{t}{m}\Box_{b,m}} \right\rbrack - \sum^{2n}_{j=0} B_{m,-n+\frac{j}{2}}t^{-n+\frac{j}{2}}\right\} \frac{dt}{t} \\
& -  \int_1^\infty\lim_{m \to \infty}\left\{  m^{-n} \operatorname{STr} \left\lbrack Ne^{-\frac{t}{m}\Box_{b,m}} \right\rbrack \right\} \frac{dt}{t}  \\
&+\frac{1}{2\pi}\operatorname{rk}(E)\int_X\widehat{A}_{-1}dv_X+\frac{1}{2\pi}\operatorname{rk}(E)\Gamma'(1)\int_X\widehat{A}_0dv_X.
\end{split}
\end{equation}
By Theorem \ref{T:1.6.1} and \eqref{E:5.5.45}, for any $t>0$,
\begin{equation}\label{E:5.5.54}
\lim_{m \to \infty} m^{-n} \operatorname{STr} \left\lbrack Ne^{-\frac{t}{m}\Box_{b,m}}(x,x)  \right\rbrack  \  =\frac{1}{2\pi}\operatorname{rk}(E) \mathcal{R}_t(x),
\end{equation}
and the convergence is uniform for $x \in X$ and for $t$ varying in compact subsets of $]0, \infty[.$ 

For $z \in \mathbb{C}$ set
\begin{equation}\label{E:5.5.55}
\widetilde{\zeta}(z)  \  =  \  -M \left\lbrack \int_X \mathcal{R}_t(x)dv_X(x) \right\rbrack (z).
\end{equation}

By Theorem \ref{T:5.5.10}, Theorem \ref{T:5.5.11}, \eqref{E:5.5.53}, \eqref{E:5.5.54}, \eqref{E:5.5.55} and \eqref{e-gue160421s}, we have
\begin{equation}\label{E:5.5.56}
\begin{split}
 &\lim_{m\to\infty}\widetilde{\theta}'_{b,m}(0)\\
 =& - \frac{1}{2\pi}\operatorname{rk}(E) \int_0^1 \left\{ \int_X \mathcal{R}_t(x) dv_X(x)-\left(\frac{1}{t}\int_X\widehat{A}_{-1}dv_X + \int_X\widehat{A}_0dv_X \right) \right\}\frac{dt}{t} \\
& -\frac{1}{2\pi}\operatorname{rk}(E) \int_1^\infty \int_X \mathcal{R}_t(x) dv_X(x) \frac{dt}{t} + \frac{1}{2\pi}\operatorname{rk}(E)\int_X\widehat{A}_{-1}dv_X+\frac{1}{2\pi}\operatorname{rk}(E)\Gamma'(1)\int_X\widehat{A}_0dv_X\\
 = &\frac{1}{2\pi}\operatorname{rk}(E) \widetilde{\zeta}'(0).
\end{split} 
\end{equation}

Since $\dot{\mathcal{R}} \in \operatorname{End}(T^{1,0}X)$ has positive eigenvalues, we can check that, for $\operatorname{Re}z>1,$
\begin{equation}\label{E:5.5.57}
\widetilde{\zeta}(z) = \left( \int_X \det \left( \frac{\dot{\mathcal{R}}}{2\pi} \right)\operatorname{Tr}(\dot{\mathcal{R}} )^{-z}(x)dv_X(x) \right) \frac{1}{\Gamma(z)}\int_0^\infty t^{z-1}\frac{e^{-t}}{1-e^{-t}}dt.
\end{equation}
Let $\zeta(z) =\sum_{k=1}^\infty \frac{1}{n^z}$ be the Riemann zeta function. It is well known that, for $\operatorname{Re}z>1$,
\begin{equation}\label{E:5.5.58}
\zeta(z) =\frac{1}{\Gamma(z)} \int_0^\infty t^{z-1} \frac{e^{-t}}{1-e^{-t}}dt.
\end{equation} 
Moreover,
\begin{equation}\label{E:5.5.59}
\zeta(0) = -\frac{1}{2}, \qquad \zeta'(0) = -\frac{1}{2} \log (2\pi).
\end{equation}

By \eqref{E:5.5.57}, \eqref{E:5.5.58} and \eqref{E:5.5.59}, we get
\begin{equation}\label{E:5.5.60}
\begin{split}
\widetilde{\zeta}'(0) = &  - \zeta(0)  \int_X \det \left( \frac{\dot{\mathcal{R}}}{2\pi}\right) \operatorname{Tr} \left\lbrack \log \dot{\mathcal{R}} \right\rbrack dv_X(x) + n \zeta'(0)  \int_X \det \left( \frac{\dot{\mathcal{R}}}{2\pi} \right) dv_X(x)  \\
 = & \frac{1}{2}\int_X \det \left( \frac{\dot{\mathcal{R}}}{2\pi}\right) \log \left( \det \left( \frac{\dot{\mathcal{R}}}{2\pi}\right)  \right) dv_X(x).
\end{split}
\end{equation}
By \eqref{E:5.5.52}, \eqref{E:5.5.56}, \eqref{E:5.5.60} and \eqref{E:5.5.38}, we get \eqref{E:5.5.33}.
\end{proof}

Let's come back to complex geometry case. Let $M$ be a compact complex manifold of dimension $n$ and let $G$ be a holomorphic vector bundle over $M$ with  a Hermitian metric $h^G$.
Let $\Box_G$ be the Kodaira Laplacian with values in $T^{*0,\bullet}M\otimes G$ and let $e^{-t\Box_G}$ be the associated heat operator. It is well-known that $e^{-t\Box_G}$ admits an asymptotic expansion as $t\To0^+$. The analytic torsion associated to $G$ is defined by $\exp ( -\frac{1}{2} \theta_{G}'(0) )$, where 
\[\theta_{G}(z)  = - M \left\lbrack \operatorname{STr}  \lbrack N e^{-t \Box_G} P^\perp  \rbrack \right\rbrack.\] 
Here $N$ is the number operator on $T^{*0,\bullet}M$, $\operatorname{STr}$ denotes the super trace , $P^\perp$ is the orthogonal projection onto $({\rm Ker\,}\Box_G)^\perp$ and $M$ denotes the Mellin transformation. 

From Theorem \ref{T:5.5.8}, we deduce Bismut and Vasserot's asymptotic formula (see \cite[Theorem 8]{BV} and \cite[Theorem 5.5.8]{MM}):
\begin{corollary} 
With the notations above, let $M$ be a compact Hermitian manifold of dimension $\dim_\mathbb{C} M = n$. Let $(L, h^L)$ be a Hermitian line bundle over $M$ such that the curvature $R^L$ associated to $h^L$ is positive, i.e. $\sqrt{-1}R^L$ is a positive $(1, 1)$-form and let $(F, h^F)$ be a holomorphic Hermitian vector bundle on $M$. 
As $m \to+\infty$, we have 
\[
\theta'_{L^m\otimes F}(0)  =  \frac{\operatorname{rk}F}{2}
 \int_M \log \left( \det \left(  \frac{m \dot{R}^L }{2\pi} \right) \right) e^{m\frac{\sqrt{-1}}{2\pi}R^L}  + o(m^{n}),
\]
where $\dot{R}^L$ is defined in \cite[(1.5.15)]{MM}.
\end{corollary}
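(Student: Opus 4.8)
The plan is to deduce the statement from Theorem~\ref{T:5.5.8} by realizing the datum $(M,L,F)$ through a circle bundle. First I would set
\[
X:=\set{v\in L^*;\, \abs{v}^2_{h^{L^*}}=1},
\]
the circle bundle of $(L^*,h^{L^*})$, with the $S^1$ action $e^{i\theta}\circ v=e^{i\theta}v$ and the induced CR structure $T^{1,0}X$. Since $\sqrt{-1}R^L>0$, $X$ is a compact connected strongly pseudoconvex CR manifold and the $S^1$ action is transversal, CR and globally free, so $p=1$, $X_{{\rm reg\,}}=X$ and $X_{{\rm sing\,}}=\emptyset$. Let $\pi:X\To M$ be the projection, put $E:=\pi^*F$ — a rigid CR vector bundle with $\operatorname{rk}E=\operatorname{rk}F$ — and equip it with the rigid Hermitian metric $\pi^*h^F$. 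I would also choose the rigid Hermitian metric on $\Complex TX$ so that $T\perp(T^{1,0}X\oplus T^{0,1}X)$, $\langle\,T\,|\,T\,\rangle=1$ and $d\pi:T^{1,0}_xX\To T^{1,0}_{\pi(x)}M$ is an isometry for every $x$ (for a Hermitian metric on $M$ of our choosing).

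Next I would recall the intertwining map $A_m:\Omega^{0,\bullet}_m(X,E)\To\Omega^{0,\bullet}(M,L^m\otimes F)$ with $A_m\ddbar_{b,m}=\ddbar A_m$ from Theorem~1.2 in~\cite{CHT}; with the above choices $A_m$ is, up to an overall positive constant, an isometry, hence it also intertwines the formal adjoints and therefore conjugates the $m$-th Kohn Laplacian $\Box_{b,m}$ on $\Omega^{0,\bullet}_m(X,E)$ into the Kodaira Laplacian $\Box_{L^m\otimes F}$ on $\Omega^{0,\bullet}(M,L^m\otimes F)$, compatibly with the number operator $N$. Since an overall constant on the $L^2$ inner product changes neither $\Box$ nor the torsion, the two operators are unitarily equivalent; comparing \eqref{E:5.5.12} with the definition of $\theta_{L^m\otimes F}(z)$ recalled above then gives $\theta_{b,m}(z)=\theta_{L^m\otimes F}(z)$ for all $z$, and in particular (applying Theorem~\ref{T:5.5.8} with $E=\pi^*F$) $\theta_{b,m}'(0)=\theta'_{L^m\otimes F}(0)$.

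Then I would apply Theorem~\ref{T:5.5.8} and push the resulting integral over $X$ down to $M$ along $\pi$. In a BRT trivialization one has $\omega_0=-d\theta+\sqrt{-1}(\pr\varphi-\ddbar\varphi)$, whence $d\omega_0=-2\sqrt{-1}\,\pr\ddbar\varphi=-\sqrt{-1}\,\pi^*R^L$ and $-\tfrac{d\omega_0}{2\pi}=\pi^*\!\bigl(\tfrac{\sqrt{-1}}{2\pi}R^L\bigr)$; moreover $\sqrt{-1}\,d\omega_0=R^L$ shows that the Hermitian endomorphism $\dot{\mathcal{R}}$ of \eqref{E:1.5.15} is carried by the isometry $d\pi$ to $\dot{R}^L$, so $\det\bigl(\tfrac{m\dot{\mathcal{R}}}{2\pi}\bigr)=\pi^*\det\bigl(\tfrac{m\dot{R}^L}{2\pi}\bigr)$. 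Since $-\omega_0$ restricts to $d\theta$ on the $S^1$ fibers, fiber integration along $\pi$ multiplies by the fiber length $2\pi$, so
\[
\frac{\operatorname{rk}(E)}{4\pi}\int_X\log\Bigl(\det\Bigl(\tfrac{m\dot{\mathcal{R}}}{2\pi}\Bigr)\Bigr)e^{-m\frac{d\omega_0}{2\pi}}\wedge(-\omega_0)=\frac{\operatorname{rk}F}{2}\int_M\log\Bigl(\det\Bigl(\tfrac{m\dot{R}^L}{2\pi}\Bigr)\Bigr)e^{m\frac{\sqrt{-1}}{2\pi}R^L},
\]
and the $o(m^n)$ remainder is preserved under fiber integration. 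Combined with $\theta_{b,m}'(0)=\theta'_{L^m\otimes F}(0)$ this is the assertion.

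The step I expect to be the main obstacle is the bookkeeping of signs and normalizing constants: pinning down the exact relation $d\omega_0=-\sqrt{-1}\,\pi^*R^L$ (which depends on the sign convention in the definition of $\omega_0$, on the convention $R^L=2\pr\ddbar\varphi$ and on the orientation of $X$), verifying the fiber length $2\pi$ in the projection formula, and — for the identification $\theta_{b,m}=\theta_{L^m\otimes F}$ — checking carefully that the $L^2$ metrics induced by the chosen rigid data on $\Omega^{0,\bullet}(X,E)$ and the standard one on $\Omega^{0,\bullet}(M,L^m\otimes F)$ correspond under $A_m$ up to a global constant, so that the full spectra (not merely the cohomologies) coincide. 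Everything else is formal once Theorem~\ref{T:5.5.8} is in hand.
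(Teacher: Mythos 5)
Your proposal is correct and follows essentially the same route as the paper's proof: realize $(M,L,F)$ via the circle bundle $X$ of $L^*$ with $E=\pi^*F$, invoke the spectral identification of $\Box_{b,m}$ with $\Box_{L^m\otimes F}$ (as in Theorem 1.2 of \cite{CHT}) to conclude $\theta_{b,m}'(0)=\theta'_{L^m\otimes F}(0)$, then apply Theorem~\ref{T:5.5.8} and push the integral down along $\pi$ using $d\omega_0=-\sqrt{-1}\,\pi^*R^L$ and fiber length $2\pi$. The only difference is that you supply the sign/normalization bookkeeping explicitly, which the paper leaves implicit.
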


\begin{proof}
We take $X$ to be the circle bundle $\left\{ v \in L^* \ : \ |v|^2_{h^{-1} } =1 \right\}$ over the compact complex manifold $M$ where $(L^*, h^{-1})$ is the dual line bundle of Hermitian line bundle $(L, h)$ over $M$. Let $(z, \lambda)$ be the local coordinates on $L^*$, where $\lambda$ is the fiber coordinates. The natural $S^1$-action on $X$ is defined by $e^{i\theta} \circ (z, \lambda) \ = \ (z, e^{i\theta}\lambda)$. Then we can check that $X$ is a compact CR manifold with a transversal CR $S^1$-action. Let $x = (z, \lambda) \in X$. On $X$ we can check that $\mathcal{L}_x|_{T^{1,0}X} = \frac{1}{2} R^L_z$. Let $E$ be the bundle over $X$ such that $E_x = F_z \times \left\{ \lambda \right\}$. It is well known that (see \cite[p. 746]{MM0}, Theorem 1.2 in \cite{CHT}), the spectrum of the Dolbeault Laplacian $\Box_{L^m\otimes F}$ associated to the bundle $L^{\otimes m} \otimes F$ on $M$ is the same as the spectrum of the $\Box_{b,m}$ associated to the bundle $E$ on $X$. From \eqref{E:5.5.33}, we have
\begin{eqnarray}
\theta'_{L^m\times F}(0) \ =  \ \theta_{b,m}'(0) & = & \frac{\operatorname{rk} E}{4\pi}
 \int_X \log \left( \det \left(  \frac{m \dot{\mathcal{R}} }{2\pi} \right) \right) e^{-m\frac{d\omega_0}{2\pi}} \wedge (-\omega_0) + o(m^{n}) \nonumber \\
& = & \frac{\operatorname{rk}F}{2} \int_M \log \left( \det \left(  \frac{m \dot{R}^L }{2\pi} \right) \right) e^{m\frac{\sqrt{-1}}{2\pi}R^L} + o(m^{n}) \nonumber
\end{eqnarray}

\end{proof}


\bibliographystyle{plain}

\end{document}